\documentclass{article}
\usepackage{amssymb}
\usepackage{amsmath}
\usepackage{amsthm}

\usepackage[usenames,dvipsnames]{pstricks}
 \usepackage{epsfig}
 \usepackage{xcolor}

\newtheorem{thm}{Theorem}[section]
\newtheorem{lem}[thm]{Lemma}
%

\newtheorem{proposition}[thm]{\bf Proposition}%
\newtheorem{corollary}[thm]{\bf Corollary}%
%
%

%
\newtheorem{problem}[thm]{\bf Problem}%
\newtheorem{definition}[thm]{\bf Definition}{}%
\newcommand{\be}{\begin{equation}}
\newcommand{\ee}{\end{equation}}
\newcommand{\ol}{\overline}

\newcommand{\ul}{\underline}

\newtheorem{MIex}[thm]{Example}
\newenvironment{example}    {\begin{MIex}\em}    {\end{MIex}}

\newtheorem{MIrem}[thm]{Remark}
\newenvironment{remark}    {\begin{MIrem}\em}    {\end{MIrem}}

\title{Polynomial graph invariants from \\ homomorphism numbers}
\author{Delia Garijo ${}^1$ \and
              Andrew J. Goodall ${}^2$ \and
              Jaroslav Ne\v{s}et\v{r}il ${}^2$}
\begin{document}

\maketitle

\footnotetext[1]
{University of Seville, Seville, Spain. Partially supported by JA-FQM164. Email address: \tt{dgarijo@us.es}.}
\footnotetext[2]
{Charles University, Prague, Czech Republic. Supported by the Center of Excellence-Inst for Theor. Comp. Sci., Prague, P202/12/G061, and by Project ERCCZ LL1201 Cores. Email addresses: \tt{andrew@iuuk.mff.cuni.cz, nesetril@kam.mff.cuni.cz}}

\begin{abstract}
We give a method of generating strongly polynomial sequences of graphs, i.e., sequences $(H_{\mathbf{k}})$ indexed by a multivariate parameter $\mathbf{k}=(k_1,\ldots, k_h)$ such that, for each fixed graph $G$, there is a multivariate polynomial $p(G;x_1,\ldots, x_h)$ such that the number of homomorphisms from $G$ to $H_{\mathbf{k}}$ is given by the evaluation $p(G;k_1,\ldots, k_h)$. Our construction produces a large family of graph polynomials that includes the Tutte polynomial, the Averbouch-Godlin-Makowsky polynomial and the Tittmann-Averbouch-Makowsky polynomial.
We also introduce a new graph parameter, the {\em branching core size} of a simple graph, related to how many involutive automorphisms with fixed points it has. We prove that a countable family of graphs of bounded branching core size (which in particular implies bounded tree-depth) can always be partitioned into a finite number of strongly polynomial subsequences.
\end{abstract}

\section{Introduction}
\subsection{Motivation}
Let $\mathbb{N}^h$ denote the set of $h$-tuples of positive integers ($h\geq 1$), and let $\mathcal{H}$ be a countably infinite set of graphs possibly with loops and/or weights on edges. Suppose  $\mathcal{H}$ is presented as a sequence $(H_\mathbf{k})$ indexed by tuples $\mathbf{k}=(k_1,\ldots, k_h)\in \mathbb{N}^h$.
Countable families of graphs are often given in the form of such a sequence, for example, the complete graphs $(K_k)$, or bipartite graphs $(K_{k_1,k_2})$. In these and other concrete examples the indices $k_1,\ldots, k_h$ correspond to some natural graph parameter (such as number of vertices) or indicate how to construct the graph in this position of the sequence (such as substituting every vertex of a base graph $H$ by $k$ twin copies to obtain $H_k$). 

In this paper, we shall be interested in the number of homomorphisms of a graph $G$ to $H_\mathbf{k}$, denoted by hom$(G, H_\mathbf{k})$, as a function of $\mathbf{k}$ and  $G$. More specifically, when is this function a multivariate polynomial in $\mathbf{k}$ for every graph $G$? We say the sequence $(H_\mathbf{k})$ is {\em strongly polynomial} if this is the case for all $\mathbf{k}$, and {\em polynomial} if for each graph $G$ there is a finite number of multivariate polynomials such that ${\rm hom}(G,H_\mathbf{k})$ is the evaluation of one of them at $\mathbf{k}$.
A well known example is the sequence of cliques $(K_k)$, which is  strongly polynomial  since  ${\rm hom}(G,K_k)$ is the value of the chromatic polynomial of $G$ at $k$. The sequence of paths $(P_k)$ provide an example of a polynomial sequence, since ${\rm hom}(G,P_k)$ is polynomial in $k$ for $k>{\rm diam}(G)$, and it is not a strongly polynomial sequence~\cite{dlHJ95}.

De la Harpe and Jaeger \cite{dlHJ95} gave necessary and sufficient criteria that enabled them to prove that ${\rm hom}(G,H_k)$ is polynomial in $k$ for a number of graph sequences $(H_k)$ with $k\in \mathbb{N}$. They also gave a general method of generating such sequences of graphs by using the operation of graph composition (described in Section~\ref{sec:ornamentation} below). On the other hand, in our paper \cite{GGN11} we established precisely for which edge-weighted graphs $H$ homomorphism functions from multigraphs $G$ to $H$ are specializations of the Tutte polynomial $T(G;x,y)$, the Averbouch-Godlin-Makowsky polynomial $\xi_G(x,y,z)$ \cite{AGM}, and the Tittmann-Averbouch-Makowsky polynomial $Q_G(x,y)$ \cite{TAM09}. The edge-weighted graphs $H$ obtained for the three polynomials take the form of a sequence of graphs $(H_{\mathbf{k}})$ indexed by a multivariate parameter $\mathbf{k}$ (shown in Figure~\ref{fig:ornamented_trees} later in this paper).
This motivates the problem of determining in general which sequences of graphs $(H_{\mathbf{k}})$ have the property that hom$(G,H_{\mathbf{k}})$ is a multivariate polynomial in $\mathbf{k}$ for all graphs $G$.

\subsection{Outline and main results}
In Section~\ref{sec:def} we formally define homomorphism numbers, and polynomial and strongly polynomial sequences of graphs. We begin in Section~\ref{sec:complementation} with some useful operations on a given (strongly) polynomial sequence $(H_{\mathbf{k}})$, indexed by ${\mathbf{k}}\in\mathbb{N}^h$, that preserve the property of being (strongly) polynomial. In Section~\ref{sec:ornamentation} we move to operations on (strongly) polynomial sequences $(F_{\mathbf{k}})$ and $(H_{\mathbf{k}})$ that produce another (strongly) polynomial sequence, including lexicographic products (Proposition~\ref{lexicographic}). We also describe the method of composition of ornamented graphs~\cite[Ex. B.7]{dlHJ95},
which can be used to produce a rich class of polynomial sequences from basic building blocks. In Section~\ref{examples} we illustrate this with some examples.

In Section~\ref{sec:treesbranching}  we introduce our new method to generate a polynomial sequence $(H_{\mathbf{k}})$
from a starting graph $H$. The idea is to view $H$ as a subgraph of the closure of a rooted tree and then apply a tree operation we shall call ``branching'' with multiplicity $k_i$, for some $1\leq i\leq h$, at vertices of this rooted tree. This generates the graph $H_{\mathbf{k}}$ indexed by $\mathbf{k}=(k_1,\ldots, k_h)$ and we prove that ${\rm hom}(G,H_{\mathbf{k}})$ is a polynomial in ${\mathbf{k}}$ for every graph $G$ for $\mathbf{k}$ sufficiently large (a special case of Theorem~\ref{thm:poly_branching}). 
Branching is also defined for ornamented graphs, and the full strength of Theorem~\ref{thm:poly_branching} consists in combining branching with composition of ornamented graphs. We thereby obtain a large class of graph polynomials that include the Tutte polynomial, the Averbouch-Godlin-Makowsky polynomial, and the Tittmann-Averbouch-Makowsky polynomial (see Figure~\ref{fig:ornamented_trees} in Section~\ref{sec:branching}). 

The branching operation produces bilateral symmetries (swap isomorphic subtrees) and in order to state our main result in Section~\ref{sec:bushiness} we introduce a new graph parameter, namely {\em branching core size}.
Our main result (Theorem~\ref{thm:bounded_bushiness}) is that given any countable family $\mathcal{H}$ of graphs of bounded branching core size there is a partition of $\mathcal{H}$ into subsets such that each subset can be indexed by a multivariate parameter $\mathbf{k}=(k_1,\ldots, k_h)$ so that it forms a subsequence of a strongly polynomial sequence of graphs, and moreover these sequences are produced by the branching operation. This result extends to the composition of ornamented graphs of bounded branching core size when there is a finite number of distinct ornaments used among all the graphs in $\mathcal{H}$.
On the other hand, we give some examples of families of graphs of unbounded branching core size (but bounded tree-depth) and yet which can also be partitioned into a finite number of strongly polynomial subsequences. 


\section{Definitions}\label{sec:def}

\subsection{Homomorphism numbers}\label{sec:hom}

Let hom$(G,H)$ denote the number of homomorphisms from a graph $G$ to a simple graph $H$, i.e., adjacency preserving maps from $V(G)$ to $V(H)$. This parameter can be extended to weighted graphs: let $H$ be a weighted graph given by its adjacency matrix $(a_{i,j})$, where $a_{i,j}$ is the weight of the edge $ij$. Then, for a multigraph $G$, the homomorphism function hom$(G,H)$ is defined by
$${\rm hom}(G,H)=\sum_{f:V(G)\rightarrow V(H)}\;\prod_{uv\in E(G)}a_{f(u),f(v)},$$
where the sum is over all functions from $V(G)$ to $V(H)$ and edges of $G$ are taken with multiplicity in the product.
When $a_{i,j}\in\{0,1\}$ this coincides with the number of homomorphisms from $G$ to $H$ as previously defined. When $a_{i,j}\in\mathbb{N}$, the graph $H$ is a multigraph and ${\rm hom}(G,H)$ counts the number of homomorphisms from $G$ to $H$ again, where now a homomorphism needs to be defined rather in terms of a pair of maps $f_0:V(G)\rightarrow V(H)$, $f_1:E(G)\rightarrow E(H)$, the defining property being that $f_1(uv)$ has endpoints $f_0(u)$ and $f_0(v)$ for every edge $uv\in E(G)$.

\begin{example}\label{ex:hom}
Let $K_k^\ell$ denote the complete graph on $k$ vertices with a loop of weight $\ell$ attached to each vertex.
Then
\begin{align*}{\rm hom}(G,K_k^\ell) & =\sum_{f:V(G)\rightarrow V(K_k^\ell)} \ell^{\#\{uv\in E(G): f(u)=f(v)\}}\\
& = k^{c(G)}(\ell-1)^{r(G)}T\left(G;\frac{\ell-1+k}{\ell-1},\ell\right),\end{align*}
where $c(G)$ is the number of connected components of $G$, $r(G)$ the rank of $G$, and $T(G;x,y)$ the Tutte polynomial of $G$.
This includes the special cases
$${\rm hom}(G,K_k)=P(G;k),$$
where $P(G;k)$ is the number of proper vertex $k$-colourings of $G$, and
$${\rm hom}(G,K_k^{1-k})=(-1)^{|E(G)|}k^{|V(G)|}F(G;k),$$
where $F(G;k)$ is the number of nowhere-zero $A$-flows of $G$ for $A$ a finite Abelian group of order $k$ (such as the cyclic group $\mathbb{Z}_k$).
\end{example}

Homomorphisms to a fixed graph $H$ are often called {\em $H$-colourings}: the vertices of $H$ being the colours and its adjacencies determining which colours are allowed to be next to each other in a colouring of the graph~$G$.

\subsection{Polynomial sequences of graphs}\label{sec:polyseq}


We now introduce the principal notions of this paper. Definition \ref{def:poly} below concerns sequences that are indexed by {\em all} tuples in $\mathbb{N}^h$, for some fixed $h\geq 1$. These sequences are our principal object of study. Definition \ref{def:poly-ext} then extends the first definition to sequences indexed by some proper subset $I\subset\mathbb{N}^h$.

\begin{definition}\label{def:poly}
Let $(H_\mathbf{k})$, $\mathbf{k}=(k_1,\ldots, k_h)\in\mathbb{N}^h$, be a sequence of graphs.

\begin{itemize}
\item[(i)] We say $(H_\mathbf{k})$ is a {\em polynomial sequence} if for every graph $G$ there exists a finite set of polynomials $p_1(G;x_1,\ldots, x_h), \ldots,$ $p_m(G;x_1,\ldots, x_h)$ such that for every $\mathbf{k}=(k_1,\ldots, k_h)\in \mathbb{N}^h$ we have  $${\rm hom}(G,H_\mathbf{k})=p_\ell(G;k_1,\ldots, k_h)\hspace{0.2cm}\mbox{\rm  for some $1\leq \ell\leq m$.}$$
In other words, for each graph $G$ there exists a partition of the index set $\mathbb{N}^h$ into subsets $I_1(G), \ldots, I_m(G)$ such that ${\rm hom}(G,H_\mathbf{k})=p_\ell(G;k_1,\ldots, k_h)$ for $\mathbf{k}=(k_1,\ldots, k_h)\in I_\ell(G)$.

\item[(ii)] We say $(H_\mathbf{k})$ is a {\em strongly polynomial sequence} if for every graph $G$ there exists a single polynomial $p(G;x_1,\ldots, x_h)$ such that ${\rm hom}(G,H_\mathbf{k})=p(G;k_1,\ldots, k_h)$ for every $\mathbf{k}=(k_1,\ldots, k_h)\in \mathbb{N}^h$.
\end{itemize}
\end{definition}

To simplify notation, henceforth we shall write $p_\ell(G;k_1,\ldots, k_h)=p_\ell(G;\mathbf{k})$, $p(G;k_1,\ldots, k_h)=p(G;\mathbf{k})$ etc.

\begin{remark} Our definition of a polynomial sequence includes as a special case that of~\cite{dlHJ95}, where polynomial sequences are defined only for $h=1$, a sequence $(H_k)$ there being defined as polynomial if ${\rm hom}(G, H_k)$ is the evaluation of a univariate polynomimal $p(G)$ for sufficiently large $k$.  In this situation, for every graph $G$ all but one of the sets $I_1(G), \ldots, I_m(G)$ (in Definition~\ref{def:poly}) is finite, i.e.,  for each graph $G$ there is a polynomial $p(G;x)$ such that ${\rm hom}(G,H_k)=p(G;k)$ for all but finitely many $k\in\mathbb{N}$, these exceptions depending on $G$.
The class of univariate polynomial sequences as defined in Definition~\ref{def:poly} is therefore larger than these ``eventually polynomial'' sequences, but we shall see in due course that our more general class of polynomial sequences shares all the key properties of this subclass.

However, it ought to be noted that there is another natural extension of de la Harpe and Jaeger's definition of a polynomial sequence of graphs to the multivariate case. This is to consider a sequence $(H_{\mathbf{k}})$ indexed by $\mathbf{k}=(k_1,\ldots, k_h)$ to be polynomial if for each graph $G$ there is a polynomial $p(G)$ and $\mathbf{m}=\mathbf{m}(G)=(m_1,\ldots, m_h)\in\mathbb{N}^h$ such that ${\rm hom}(G,H_{\mathbf{k}})=p(G;\mathbf{k})$ when $k_i\geq m_i$ for each $1\leq i\leq h$ (i.e., ``for sufficiently large $\mathbf{k}$''). In fact, it would be easy to modify the statement of our results in accordance with such a definition, the difference in definition taken not affecting the validity of the results. 

\end{remark}

Of course, a given parametrization of a family $\mathcal{H}$ of graphs may fail to give a polynomial sequence $(H_{\mathbf{k}})$. However, we are looking from the positive side: we seek instances  $\mathcal{H}$ where there exists a parametrization of $\mathcal{H}$ by $\mathbf{k}\in\mathbb{N}^h$ for some $h\geq 1$ which does yield a polynomial sequence.

\begin{definition}\label{def:poly-ext}
Let $(F_\mathbf{k})$, $\mathbf{k}\in I\subseteq\mathbb{N}^h$, be a sequence of graphs indexed by a subset $I$ of $\mathbb{N}^h$.

We say $(F_\mathbf{k})$ is a {\em polynomial subsequence} if it is a subsequence of a polynomial sequence, i.e., there is a polynomial sequence $(H_\mathbf{k})$,  $\mathbf{k}\in\mathbb{N}^h$, such that $H_\mathbf{k}=F_\mathbf{k}$ when $\mathbf{k}\in I$.

Similarly, we say $(F_\mathbf{k})$ is a {\em strongly polynomial subsequence} if it is a subsequence of a strongly polynomial sequence.
\end{definition}

\begin{example}
The sequence $(K_{2k})$, indexed by $2k$, is a strongly polynomial subsequence, as it is a subsequence of $(K_k)$, indexed by $k$, which is strongly polynomial (see Example~\ref{ex:hom}). If we instead consider $(K_{2k})$ as being indexed by $k\in\mathbb{N}$ then, given in this form, this polynomial sequence is no longer a subsequence of $(K_k)$. (It requires reindexing by $2k$ in order for this to become the case.) 
\end{example}

Clearly, the property of being a (strongly) polynomial subsequence is unaffected by removing any number of its terms, and the property of being a polynomial subsequence is unaffected by changing a finite number of terms or introducing a finite number of extra terms.

 \begin{remark}\label{rmk:subseq} Suppose $(F_\mathbf{k})$ is a sequence indexed by $I\subset \mathbb{N}^h$ such that for each graph $G$ there exists a partition of $I$ into subsets $I_1(G), \ldots, I_m(G)$ such that ${\rm hom}(G,F_\mathbf{k})=p_\ell(G;\mathbf{k})$ for all $\mathbf{k}\in I_\ell(G)$, $1\leq\ell\leq m$. It may be that $(F_\mathbf{k})$ is not a polynomial subsequence, i.e., there is no polynomial sequence $(H_\mathbf{k})$, $\mathbf{k}\in\mathbb{N}^h$, with $H_\mathbf{k}=F_\mathbf{k}$ for $\mathbf{k}\in I$.
 (A candidate for such a sequence may be the sequence of hypercubes -- see Example~\ref{ex:cubes} below.)
 \end{remark}


\section{New sequences from old}\label{sec:constructs}

In this section we review and extend some constructions given in~\cite{dlHJ95} for generating new polynomial sequences from old, and give a number of concrete examples of polynomial sequences that can be created by these constructions.

\subsection{Complementation and line graph}\label{sec:complementation}

The {\em complement} $\overline{H}$ of a simple graph $H$ has $V(\overline{H})=V(H)$ and $E(\overline{H})=\binom{V(H)}{2}\setminus E(H)$. The following is a straightforward translation of \cite[Prop.1]{dlHJ95} to accommodate our more general notion of a polynomial sequence of graphs:
\begin{proposition}\label{complement}
If $(H_{\mathbf{k}})$ is a (strongly) polynomial sequence of simple graphs indexed by ${\mathbf{k}}\in\mathbb{N}^h$, then the sequence of complements $(\overline{H_{\mathbf{k}}})$ is (strongly) polynomial.
\end{proposition}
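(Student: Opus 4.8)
The plan is to reduce ${\rm hom}(G,\overline{H_{\mathbf{k}}})$ to a fixed finite integer combination of homomorphism numbers into $H_{\mathbf{k}}$ itself, and then let the hypothesis on $(H_{\mathbf{k}})$ do the rest; this is essentially the content of~\cite[Prop.~1]{dlHJ95}, and the only adaptation needed is to check that the piecewise, multivariate version of the conclusion survives the reduction.

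First I would fix $H=H_{\mathbf{k}}$ with vertex set of size $n=n(\mathbf{k})$ and adjacency matrix $A$, and record that $\overline H$ has adjacency matrix $B=J-I-A$ (all-ones minus identity minus $A$), which in particular is loopless. Substituting $B_{f(u)f(v)}=(1-[f(u)=f(v)])-A_{f(u)f(v)}$ into
$${\rm hom}(G,\overline H)=\sum_{f:V(G)\to V(H)}\ \prod_{uv\in E(G)}B_{f(u)f(v)}$$
and expanding the product first over subsets $S\subseteq E(G)$ (which edges contribute their $A$-term) and then over subsets $R\subseteq E(G)\setminus S$ (which of the remaining edges contribute their $-[f(u)=f(v)]$ term), the factor $\prod_{uv\in R}[f(u)=f(v)]$ forces $f$ to be constant on each connected component of the spanning subgraph $(V(G),R)$. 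Summing over such $f$ then gives exactly ${\rm hom}(G_{S,R},H)$, where $G_{S,R}$ is the multigraph obtained from $(V(G),S)$ by contracting the edges of $R$ (parallel edges created by contraction cause no problem, and if a loop is created the term vanishes because $H$ is simple). This yields
$${\rm hom}(G,\overline{H})=\sum_{S\subseteq E(G)}\ \sum_{R\subseteq E(G)\setminus S}(-1)^{|S|+|R|}\,{\rm hom}(G_{S,R},H),$$
a $\mathbb{Z}$-linear combination, with coefficients independent of $H$, over the finite family $\mathcal{F}(G)=\{G_{S,R}:S,R\}$ of multigraphs determined by $G$ alone.

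With this identity in hand the conclusion is immediate. If $(H_{\mathbf{k}})$ is strongly polynomial, replace each ${\rm hom}(G_{S,R},H_{\mathbf{k}})$ by the single polynomial $p(G_{S,R};\mathbf{k})$ guaranteed by Definition~\ref{def:poly}(ii); then ${\rm hom}(G,\overline{H_{\mathbf{k}}})$ is the corresponding fixed combination of these, hence a single polynomial in $\mathbf{k}$. If $(H_{\mathbf{k}})$ is only polynomial, each $G'\in\mathcal{F}(G)$ comes with a finite partition of $\mathbb{N}^h$ on whose cells ${\rm hom}(G',H_{\mathbf{k}})$ is polynomial; taking the common refinement of these finitely many finite partitions gives a finite partition of $\mathbb{N}^h$ on each cell of which ${\rm hom}(G,\overline{H_{\mathbf{k}}})$ equals a fixed combination of polynomials, hence a polynomial. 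Either way $(\overline{H_{\mathbf{k}}})$ is (strongly) polynomial.

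I do not expect a serious obstacle here; the one place to be careful is the two-layer inclusion--exclusion. It is tempting to think $\overline H$-colourings of $G$ relate only to the spanning subgraphs $(V(G),S)$, but because $\overline H$ is loopless every edge of $G$ must in addition be mapped to two \emph{distinct} vertices, and it is exactly this extra constraint that produces the second sum over $R$ and the contracted graphs $G_{S,R}$. The remaining point specific to our more general (multivariate, piecewise) notion of a polynomial sequence --- that a common refinement of finitely many finite partitions of $\mathbb{N}^h$ is again finite --- is routine.
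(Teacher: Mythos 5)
Your proof is correct and takes essentially the same approach as the paper's: your identity $\mathrm{hom}(G,\overline{H})=\sum_{S,R}(-1)^{|S|+|R|}\mathrm{hom}(G_{S,R},H)$ is, after the change of notation $C=R$, $D=E(G)\setminus(S\cup R)$, exactly the paper's equation~\eqref{eq:compl}, and the common-refinement argument for the piecewise case matches the paper's. The only difference is that you derive the inclusion--exclusion identity from the substitution $B=J-I-A$, whereas the paper cites it directly.
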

\begin{proof} We use the identity
\be\label{eq:compl}{\rm hom}(G,\overline{H_{\mathbf{k}}})=\sum_{\stackrel{C\cup D\subseteq E(G)}{C\cap D=\emptyset}}(-1)^{|E(G)|-|D|}{\rm hom}(G/C-D,H_{\mathbf{k}}),\ee
where $G/C-D$ is the graph obtained from $G$ by contracting the edges of $C$ and by deleting the edges of $D$.

Using the notation of Definition~\ref{def:poly}, for each graph $G/C-D$ there is a partition of $\mathbb{N}^h$ into subsets $I_{\ell}(G/C-D)$, $1\leq\ell\leq m(G/C-D)$, such that ${\rm hom}(G/C-D,H_{\mathbf{k}})=p_\ell(G/C-D;{\mathbf{k}})$ when ${\mathbf{k}}\in I_{\ell}(G/C-D)$. (In the case where $(H_{\mathbf{k}})$ is a strongly polynomial sequence we have $m(G/C-D)=1$ for all choices of $G$ and $C\cup D\subseteq E(G)$.) The greatest common refinement of the partitions $\{I_\ell(G/C-D):1\leq \ell\leq m(G/C-D)\}$ of $\mathbb{N}^h$ for $C\cup D\subseteq E(G)$ is another partition of $\mathbb{N}^h$ into a finite number of subsets (dependent on $G$), on each of which ${\rm hom}(G,\overline{H_{\mathbf{k}}})$ is a polynomial in ${\mathbf{k}}$.
\end{proof}

A minor modification in the proof of Proposition~\ref{complement} yields the following variant, where now we consider the {\em looped complement} of a graph $H$, i.e., the graph $\widetilde{H}$ with the same vertex set but whose edge set is $V(H)\times V(H)\setminus E(H)$. For example, the looped complement of the complete graph $K_k$, $k\in\mathbb{N}$, consists of $k$ vertices each with a loop: $\widetilde{K_k}=kK_1^1$.

\begin{proposition}\label{lcomplement}
If $(H_{\mathbf{k}})$ is a (strongly) polynomial sequence of simple graphs indexed by ${\mathbf{k}}\in\mathbb{N}^h$, then the sequence of looped complements $(\widetilde{H_{\mathbf{k}}})$ is  (strongly) polynomial.
\end{proposition}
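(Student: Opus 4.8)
The plan is to imitate the proof of Proposition~\ref{complement}, replacing the inclusion--exclusion identity (\ref{eq:compl}) by a simpler one adapted to the presence of loops. Write $A=(a_{i,j})$ for the adjacency matrix of the simple graph $H_{\mathbf{k}}$, so that $a_{i,i}=0$; the adjacency matrix of $\widetilde{H_{\mathbf{k}}}$ then has entries $1-a_{i,j}$ for \emph{all} $i,j$, including the diagonal (where the all-ones loop of $\widetilde{H_{\mathbf{k}}}$ contributes $1-a_{i,i}=1$). Hence, for a multigraph $G$, expanding the product over edges gives
$$
{\rm hom}(G,\widetilde{H_{\mathbf{k}}})=\sum_{f:V(G)\to V(H_{\mathbf{k}})}\ \prod_{uv\in E(G)}\bigl(1-a_{f(u),f(v)}\bigr)=\sum_{D\subseteq E(G)}(-1)^{|D|}\,{\rm hom}(G\cdot D,H_{\mathbf{k}}),
$$
where $G\cdot D$ denotes the spanning subgraph $(V(G),D)$ of $G$; its isolated vertices each contribute a factor $|V(H_{\mathbf{k}})|$ to the homomorphism count, matching exactly the free choices of $f$ on vertices of $G$ not covered by $D$. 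This is the analogue of (\ref{eq:compl}) for the looped complement. Note that no contractions occur here: because $\widetilde{H_{\mathbf{k}}}$ has a loop at every vertex, the case $f(u)=f(v)$ is always admissible, so there is no distinctness constraint to encode, and the identity is in fact simpler than (\ref{eq:compl}).

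With this identity in hand the remaining argument is word-for-word that of Proposition~\ref{complement}. For each of the finitely many edge subsets $D\subseteq E(G)$, the hypothesis that $(H_{\mathbf{k}})$ is a polynomial sequence supplies a partition of $\mathbb{N}^h$ into finitely many parts $I_\ell(G\cdot D)$ on each of which ${\rm hom}(G\cdot D,H_{\mathbf{k}})$ agrees with a polynomial $p_\ell(G\cdot D;\mathbf{k})$; in the strongly polynomial case there is a single such polynomial valid on all of $\mathbb{N}^h$. Taking the greatest common refinement of these finitely many partitions yields a finite partition of $\mathbb{N}^h$ on each part of which ${\rm hom}(G,\widetilde{H_{\mathbf{k}}})$ is a fixed $\mathbb{Z}$-linear combination of polynomials in $\mathbf{k}$, hence itself a polynomial in $\mathbf{k}$; this shows $(\widetilde{H_{\mathbf{k}}})$ is polynomial. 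In the strongly polynomial case no refinement is needed and the single polynomial $\sum_{D\subseteq E(G)}(-1)^{|D|}p(G\cdot D;\mathbf{k})$ represents ${\rm hom}(G,\widetilde{H_{\mathbf{k}}})$ everywhere.

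There is no genuine obstacle in this proof; the only step needing care is the derivation and bookkeeping of the displayed identity, in particular checking that $1-a_{i,j}$ is the correct weight on the diagonal (corresponding to the loops introduced by $\widetilde{\ \cdot\ }$) and that the free vertices of $G\cdot D$ are correctly absorbed into ${\rm hom}(G\cdot D,H_{\mathbf{k}})$. As a sanity check, one recovers ${\rm hom}(G,\widetilde{K_k})={\rm hom}(G,kK_1^1)=k^{c(G)}$ by applying the identity and performing the standard inclusion--exclusion on the chromatic polynomials $P(G\cdot D;k)$.
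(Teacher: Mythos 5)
Your proof is correct and is essentially the paper's own argument: your identity $\sum_{D\subseteq E(G)}(-1)^{|D|}{\rm hom}(G\cdot D,H_{\mathbf{k}})$ is the paper's equation \eqref{eq:loop_compl} after the change of summation variable $D\mapsto E(G)\setminus D$, and the subsequent common-refinement step is identical to that in Proposition~\ref{complement}.
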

\begin{proof}
By inclusion-exclusion (on the property that a given set of edges $D$ of $G$ are sent to edges of $H_{\mathbf{k}}$ and hence to non-edges of $\overline{H_{\mathbf{k}}}$) we have
\be\label{eq:loop_compl}{\rm hom}(G,\widetilde{H_{\mathbf{k}}})=\sum_{D\subseteq E(G)}(-1)^{|E(G)|-|D|}{\rm hom}(G-D,H_{\mathbf{k}}),\ee
where the sum is over spanning subgraphs of $G$, i.e., $G-D$ for $D\subseteq E(G)$ (rather than over minors of $G$ as in Proposition~\ref{complement}).
\end{proof}

We obtain as a consequence:

\begin{corollary}\label{remove_loops}
If $(H_{\mathbf{k}})$ is a (strongly) polynomial sequence of graphs indexed by ${\mathbf{k}}\in\mathbb{N}^h$, each graph with a single loop on each vertex, then the sequence of graphs obtained by removing all loops from the graphs $H_{\mathbf{k}}$ is (strongly) polynomial.
\end{corollary}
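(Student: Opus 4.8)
The plan is to reduce the corollary to the two complementation results already proved, by observing that deleting all the loops of $H_{\mathbf{k}}$ is the same as applying looped complementation followed by ordinary complementation. Fix $\mathbf{k}$, write $V=V(H_{\mathbf{k}})$, let $E^{\circ}\subseteq\binom{V}{2}$ be the set of non-loop edges of $H_{\mathbf{k}}$, and let $H_{\mathbf{k}}^{\circ}$ be the simple graph on $V$ with edge set $E^{\circ}$, i.e.\ the graph obtained from $H_{\mathbf{k}}$ by removing every loop. Since $H_{\mathbf{k}}$ has a loop at each of its vertices, the looped complement $\widetilde{H_{\mathbf{k}}}$ has no loop at any vertex, so it is a simple graph, and its edge set is $\binom{V}{2}\setminus E^{\circ}$; hence its ordinary complement $\overline{\widetilde{H_{\mathbf{k}}}}$ has vertex set $V$ and edge set $\binom{V}{2}\setminus\bigl(\binom{V}{2}\setminus E^{\circ}\bigr)=E^{\circ}$. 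Thus $\overline{\widetilde{H_{\mathbf{k}}}}=H_{\mathbf{k}}^{\circ}$ for every $\mathbf{k}$, and it suffices to show that $(\overline{\widetilde{H_{\mathbf{k}}}})$ is a (strongly) polynomial sequence.

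For this I would argue in two steps. First, I claim $(\widetilde{H_{\mathbf{k}}})$ is (strongly) polynomial: identity \eqref{eq:loop_compl}, on which the proof of Proposition~\ref{lcomplement} rests, is obtained purely from the relation $\widetilde{a}_{i,j}=1-a_{i,j}$ between adjacency matrices and so holds for any graph whose adjacency matrix is $\{0,1\}$-valued, loops allowed; applying it to the given sequence $(H_{\mathbf{k}})$ and taking the common refinement of the (finitely many) partitions of $\mathbb{N}^h$ coming from the polynomial sequence $(H_{\mathbf{k}})$ applied to the spanning subgraphs $G-D$, $D\subseteq E(G)$, shows that ${\rm hom}(G,\widetilde{H_{\mathbf{k}}})$ is a polynomial in $\mathbf{k}$ on each cell (a single cell in the strongly polynomial case). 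Second, since each $\widetilde{H_{\mathbf{k}}}$ is a simple graph, Proposition~\ref{complement} applies directly to the sequence $(\widetilde{H_{\mathbf{k}}})$ and gives that $(\overline{\widetilde{H_{\mathbf{k}}}})$ is (strongly) polynomial, which by the previous paragraph is the conclusion we want.

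The only subtlety — and the place where a hasty argument could slip — is that Proposition~\ref{lcomplement} is stated for sequences of simple graphs, whereas here the $H_{\mathbf{k}}$ carry loops; so I cannot quote it verbatim, but its proof uses nothing about $H_{\mathbf{k}}$ beyond $a_{i,j}\in\{0,1\}$, and I would either record the proposition in this mild generality or simply re-run its one-line inclusion--exclusion argument here. Everything else is a routine verification that $\overline{\widetilde{H_{\mathbf{k}}}}$ and $H_{\mathbf{k}}^{\circ}$ are the same graph (sanity check: $\widetilde{K_k^1}=kK_1$, so $\overline{\widetilde{K_k^1}}=K_k=(K_k^1)^{\circ}$), and no new partitions of $\mathbb{N}^h$ or polynomial identities beyond those internal to Propositions~\ref{complement} and~\ref{lcomplement} are needed.
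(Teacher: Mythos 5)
Your proof is correct and takes exactly the paper's approach: write the loop-removed graph as $\overline{\widetilde{H_{\mathbf{k}}}}$ and apply Propositions~\ref{lcomplement} and~\ref{complement} in turn. You are also right to flag that Proposition~\ref{lcomplement} as stated assumes simple graphs while the $H_{\mathbf{k}}$ here carry loops — the paper's own one-line proof glosses over this — and your fix (observing that identity~\eqref{eq:loop_compl} only needs a $\{0,1\}$ adjacency matrix) is the correct way to close that small gap.
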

\begin{proof}
Let $(H_{\mathbf{k}})$ be a polynomial sequence of graphs with a loop on each vertex. By Proposition~\ref{lcomplement} the sequence $(\widetilde{H_{\mathbf{k}}})$ is also polynomial and by Proposition~\ref{complement} so is the sequence $(\overline{\widetilde{H_{\mathbf{k}}}})$. This sequence is $(H_{\mathbf{k}})$ with loops removed.
\end{proof}

Propositions~\ref{complement} and~\ref{lcomplement} are special cases of the following weighted and combined version:

\begin{proposition}\label{poly_seq_preserve}
Suppose that $(H_{\mathbf{k}})$, ${\mathbf{k}}\in\mathbb{N}^h$, is a (strongly) polynomial sequence of edge-weighted graphs, and that $H_{\mathbf{k}}$ has (weighted) adjacency matrix $(a_{i,j}^{\mathbf{k}})$. For given constants $\alpha,\beta,\alpha',\beta'$, let $(\widetilde{H}_{\mathbf{k}})$ be the sequence of edge-weighted graphs with $V(\widetilde{H}_{\mathbf{k}})=V(H_{\mathbf{k}})$,  weight $\alpha+\beta a_{i,j}^{\mathbf{k}}$ on non-loop edges ($i\neq j$) and weight $\alpha'+\beta' a_{i,i}^{\mathbf{k}}$ on loop edges.  Then  the sequence $(\widetilde{H}_{\mathbf{k}})$ is  (strongly) polynomial.
\end{proposition}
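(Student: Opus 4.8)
The plan is to mimic the inclusion–exclusion arguments of Propositions~\ref{complement} and~\ref{lcomplement}, but carried out in a way that keeps track of loop edges and non-loop edges separately, and that absorbs the affine coefficients $\alpha,\beta,\alpha',\beta'$ into a sum over subgraphs. First I would expand the homomorphism function of $\widetilde H_{\mathbf k}$ directly from the definition:
\[
{\rm hom}(G,\widetilde H_{\mathbf k})=\sum_{f:V(G)\to V(H_{\mathbf k})}\ \prod_{uv\in E(G),\,f(u)\neq f(v)}\bigl(\alpha+\beta a_{f(u),f(v)}^{\mathbf k}\bigr)\ \prod_{uv\in E(G),\,f(u)=f(v)}\bigl(\alpha'+\beta' a_{f(u),f(u)}^{\mathbf k}\bigr).
\]
Here a non-loop edge $uv$ of $G$ contributes a loop factor of $\widetilde H_{\mathbf k}$ exactly when $f$ identifies its endpoints, so the two products are governed by the partition of $E(G)$ induced by which edges are ``collapsed'' by $f$. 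I would then expand each binomial factor $\alpha+\beta a$ and $\alpha'+\beta' a$, choosing for each edge of $G$ whether to take the constant term or the term carrying the weight $a$. This yields a sum over a pair of edge subsets: a set $C\subseteq E(G)$ of edges on which we pick up a genuine $a_{f(u),f(v)}^{\mathbf k}$ factor, and its complement on which we pick up the relevant constant. Summing over $f$ afterward, the edges in $C$ on which $f$ identifies endpoints contribute $a_{i,i}^{\mathbf k}$ (a loop of $H_{\mathbf k}$) and those on which $f$ does not contribute $a_{i,j}^{\mathbf k}$ (a non-loop edge of $H_{\mathbf k}$); the constant-term edges split the same way into $\alpha$ or $\alpha'$ according to whether $f$ collapses them. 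The key observation is that, after fixing $C$ and the set $S\subseteq C$ of edges in $C$ required to be collapsed and the set $T\subseteq E(G)\setminus C$ of constant-term edges required to be collapsed, the remaining sum over $f$ is (up to the accumulated scalar $\alpha^{|E(G)\setminus C\setminus T|}(\alpha')^{|T|}$ and a sign-free weight bookkeeping) a value ${\rm hom}(G',H_{\mathbf k})$ for a graph $G'$ obtained from $G$ by contracting $S\cup T$, deleting the non-$C$, non-$T$ edges, and — crucially — inserting a loop at each vertex of $G'$ that came from contracting an edge of $C$. Since $H_{\mathbf k}$ has loops, those loop edges of $G'$ correctly pick out the diagonal entries $a_{i,i}^{\mathbf k}$.

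The technical heart is therefore to show that each of these building-block sums is itself of the form ${\rm hom}(G',H_{\mathbf k})$ for an explicit finite list of graphs $G'$ depending only on $G$ (not on $\mathbf k$), so that the hypothesis that $(H_{\mathbf k})$ is (strongly) polynomial can be applied termwise. I would handle the ``edges of $C$ collapsed vs.\ not collapsed'' dichotomy by the same inclusion–exclusion device used in~\eqref{eq:loop_compl}: the constraint ``$f$ does not identify the endpoints of $uv$'' is the complement of ``$f$ does identify them'', and a set of the latter events can be turned into a signed sum of contractions. After this massage, every term becomes $\pm\,(\text{scalar})\cdot{\rm hom}(G/C'-D',H_{\mathbf k})$ with possibly some added loops, ranging over a finite family of minors-with-loops of $G$. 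Applying Definition~\ref{def:poly} to each such $H_{\mathbf k}$-argument, I get a finite collection of partitions of $\mathbb N^h$ with a polynomial in $\mathbf k$ on each piece; taking the common refinement of all these partitions gives a single finite partition of $\mathbb N^h$ (depending only on $G$) on each cell of which ${\rm hom}(G,\widetilde H_{\mathbf k})$ is a finite $\mathbb Z[\alpha,\beta,\alpha',\beta']$-linear combination of polynomials in $\mathbf k$, hence itself a polynomial. In the strongly polynomial case every partition is trivial, the refinement is trivial, and we get a single global polynomial, establishing that $(\widetilde H_{\mathbf k})$ is strongly polynomial.

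The main obstacle I anticipate is purely bookkeeping: correctly describing the auxiliary graphs $G'$ (which edges get contracted, which get deleted, where loops are added) and verifying that the scalar prefactors and signs combine as claimed, uniformly in $\mathbf k$. In particular one must be careful that contracting a set of edges of $G$ may create parallel edges and loops in $G'$, and that the loop weights of $H_{\mathbf k}$ are exactly what is needed to make the collapsed-edge factors come out as $a_{i,i}^{\mathbf k}$; this is the place where the hypothesis that the $H_{\mathbf k}$ may carry loops is essential and where Corollary~\ref{remove_loops} or Proposition~\ref{lcomplement} cannot be invoked as a black box. Once the reduction to a finite $\mathbb Z[\alpha,\beta,\alpha',\beta']$-combination of terms ${\rm hom}(G',H_{\mathbf k})$ is in place, the polynomiality conclusion — and the preservation of the ``strongly'' qualifier via common refinement of the finitely many partitions — follows exactly as in Proposition~\ref{complement}. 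I would close by remarking that Propositions~\ref{complement} and~\ref{lcomplement} are recovered by specializing $(\alpha,\beta,\alpha',\beta')$ to $(1,-1,0,0)$ and $(1,-1,1,-1)$ respectively (with the looped/unlooped conventions matched up), which is a useful sanity check on the signs.
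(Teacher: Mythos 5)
Your plan is sound and arrives at the correct conclusion, but it takes a genuinely different route from the paper's. The paper introduces the auxiliary constants $a=\alpha$, $b=\alpha-\alpha'$, $c=-\beta$, $d=\beta-\beta'$ so that the new weight is $a-b\delta_{i,j}-c\,a_{i,j}^{\mathbf{k}}-d\,\delta_{i,j}\,a_{i,j}^{\mathbf{k}}$, and then expands $\prod_{uv\in E}\tilde a_{f(u),f(v)}$ in one multinomial step; the Kronecker factors $\delta_{f(u),f(v)}$ attached to the $b$- and $d$-atoms force $f$ to be constant on the contracted vertex classes, so the inner sum over $f$ is read off directly as a homomorphism number of a minor-with-loops $G/C-D$ of $G$ --- the inclusion--exclusion over ``which edges are collapsed'' is absorbed into the algebra by the choice of constants rather than performed as a separate signed sum. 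You instead split the product according to which edges carry a genuine $a^{\mathbf{k}}$-factor, which forces you to control the event ``$f$ identifies the endpoints of \emph{exactly} this edge set''; you then M\"obius-invert (your ``inclusion--exclusion device'') to trade ``exactly'' for ``at least'', after which the inner sum over $f$ is a bona fide homomorphism number. Both routes end at a finite $\mathbb{Z}[\alpha,\beta,\alpha',\beta']$-linear combination of ${\rm hom}(G',H_{\mathbf{k}})$ over graphs $G'$ depending only on $G$, and the closing common-refinement step is the same. Your explicit M\"obius inversion is the more conservative route: it surfaces the ``exactly vs.\ at-least'' subtlety that the paper's slicker identity leaves buried in the choice of constants (note that, as displayed, the sum over $B$ in the paper's last line is tacitly constrained to $B=\{uv\in E\setminus A:f(u)=f(v)\}$, and care is needed not to read it as a free sum once $\sum_f$ has been absorbed into ${\rm hom}(\cdot)$), at the cost of one additional layer of signed summation. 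Your sanity check --- that $(\alpha,\beta,\alpha',\beta')=(1,-1,0,0)$ and $(1,-1,1,-1)$ recover Propositions~\ref{complement} and~\ref{lcomplement} respectively --- is correct and worth keeping.
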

\begin{proof}
Let $\delta_{i,j}=1$ if $i=j$ and $\delta_{i,j}=0$ if $i\neq j$. Set $a=\alpha$, $b=\alpha-\alpha'$, $c=-\beta$ and $d=\beta-\beta'$.
Then the graph $\widetilde{H}_{\mathbf{k}}$ with adjacency matrix $$(a-b\delta_{i,j}-ca_{i,j}^{\mathbf{k}}-d\delta_{i,j}a_{i,j}^{\mathbf{k}})$$ gives weight $a-ca_{i,j}^{\mathbf{k}}$ to a non-loop edge $ij$
and weight $a-b-(c+d)a_{i,i}^{\mathbf{k}}$ to a loop on $i$. For a graph $G=(V,E)$ we have

{\small{\begin{align*} & {\rm hom}(G,\widetilde{H}_{\mathbf{k}})  =\sum_{f:V\rightarrow V(H_{\mathbf{k}})}\;\prod_{uv\in E}(a-b\delta_{f(u),f(v)}-ca_{f(u),f(v)}^{\mathbf{k}}-d\delta_{f(u),f(v)}a_{f(u),f(v)}^{\mathbf{k}})\\ \noalign{\bigskip}
& = \sum_{f:V\rightarrow V(H_{\mathbf{k}})}\; \sum_{A\subseteq E}a^{|A|}(-1)^{|E|-|A|}\prod_{uv\in E\setminus A}(b\delta_{f(u),f(v)}+ca_{f(u),f(v)}^{\mathbf{k}}+d\delta_{f(u),f(v)}a_{f(u),f(v)}^{\mathbf{k}})\\
\noalign{\bigskip}
& = \sum_{f:V\rightarrow V(H_{\mathbf{k}})}\; \sum_{A\subseteq E}a^{|A|}(-1)^{|E|\!-\!|A|}\sum_{B\subseteq E\setminus A}\;\prod_{uv\in B}(b\!+\!(c\!+\!d)a_{f(u),f(v)}^{\mathbf{k}})\prod_{uv\in E\setminus (A\cup B)}ca_{f(u),f(v)}^{\mathbf{k}}\\ \noalign{\bigskip}
& = \sum_{A\subseteq E}a^{|A|}(-1)^{|E|\!-\!|A|}\!\sum_{B\subseteq E\!\setminus\! A}\sum_{C\subseteq B}b^{|B|\!-\!|C|}(c\!+\!d)^{|C|}c^{|E|\!-\!|A|\!-\!|B|}{\rm hom}(G\!/\!C\!-\!(A\cup\!(B\!\setminus\! C)), H_{\mathbf{k}}),\end{align*}}}

\noindent where $B=\{uv\in E\setminus A \, : \, f(u)=f(v)\}$ and $G/C-(A\cup (B\setminus C))$ is the graph obtained from $G$ by contracting the edges of $C$ and by deleting the edges of $A\cup (B\setminus C)$.
This implies that the sequence $(\widetilde{H}_{\mathbf{k}})$  is (strongly) polynomial when $(H_{\mathbf{k}})$ is.
\end{proof}

A useful consequence of Proposition~\ref{poly_seq_preserve} is the following result, complementary to Corollary~\ref{remove_loops}:

\begin{corollary}\label{add_loops}
 Let $(H_{\mathbf{k}})$ be a sequence of simple graphs (with the possible exception of a loop on some vertices) which is strongly polynomial in ${\mathbf{k}}\in\mathbb{N}^h$. Then the sequence resulting from adding a loop to all vertices of $H_{\mathbf{k}}$ that do not yet have a loop attached 
  is (strongly) polynomial in $\mathbf{k}$.

\end{corollary}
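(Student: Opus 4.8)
The plan is to deduce Corollary~\ref{add_loops} directly from Proposition~\ref{poly_seq_preserve} by choosing the four constants $\alpha,\beta,\alpha',\beta'$ so that the transformation in the proposition is the identity on non-loop edges and the ``add a loop'' operation on loops. First I would set $\alpha=0$ and $\beta=1$, so that a non-loop edge $ij$ of $H_{\mathbf{k}}$ keeps its weight $a_{i,j}^{\mathbf{k}}$ unchanged; in particular non-edges (weight $0$) stay non-edges. For the loop weights I want the output to have a loop of weight $1$ on \emph{every} vertex, regardless of whether $a_{i,i}^{\mathbf{k}}$ was $0$ (no loop) or $1$ (already a loop). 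So I would take $\alpha'=1$, $\beta'=0$, giving the new loop weight $\alpha'+\beta' a_{i,i}^{\mathbf{k}}=1$ on each vertex $i$. Proposition~\ref{poly_seq_preserve} then immediately tells us that the resulting sequence $(\widetilde H_{\mathbf{k}})$ is (strongly) polynomial whenever $(H_{\mathbf{k}})$ is.

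The one point needing a word of care is the phrase ``with the possible exception of a loop on some vertices'': the hypothesis allows some vertices of $H_{\mathbf{k}}$ to already carry a loop (necessarily of weight $1$, since these are essentially simple graphs), and the conclusion should be the graph where a loop of weight $1$ is present on \emph{all} vertices. The computation above handles both cases uniformly: a vertex with $a_{i,i}^{\mathbf{k}}=1$ gets new loop weight $1$ (unchanged), and a vertex with $a_{i,i}^{\mathbf{k}}=0$ gets new loop weight $1$ (a loop added). Thus $\widetilde H_{\mathbf{k}}$ is exactly ``$H_{\mathbf{k}}$ with a loop added to every loopless vertex,'' as required. I should also note that the proposition is stated for arbitrary constants, so there is no issue with these particular choices; and since we start from a strongly polynomial sequence, Proposition~\ref{poly_seq_preserve} returns a strongly polynomial sequence (the parenthetical ``(strongly)'' in its statement), matching the corollary.

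There is essentially no obstacle here: the corollary is a pure specialization of Proposition~\ref{poly_seq_preserve}, and the only mild subtlety — verifying that the chosen constants produce precisely the intended graph in all allowed starting configurations — is a one-line check. If one wanted to be completely explicit, one could instead phrase the proof in the style of Corollary~\ref{remove_loops}, but invoking the weighted Proposition~\ref{poly_seq_preserve} is the cleanest route. A short proof along these lines follows.

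\begin{proof}
Apply Proposition~\ref{poly_seq_preserve} with $\alpha=0$, $\beta=1$, $\alpha'=1$, $\beta'=0$. Then each non-loop edge $ij$ of $H_{\mathbf{k}}$ retains its weight $a_{i,j}^{\mathbf{k}}$ (so non-edges remain non-edges and edges remain edges), while each loop on a vertex $i$ receives the new weight $\alpha'+\beta' a_{i,i}^{\mathbf{k}}=1$, irrespective of whether $i$ previously carried a loop. Hence the resulting edge-weighted graph $\widetilde H_{\mathbf{k}}$ is exactly $H_{\mathbf{k}}$ with a loop added to every vertex not already bearing one. By Proposition~\ref{poly_seq_preserve}, the sequence $(\widetilde H_{\mathbf{k}})$ is (strongly) polynomial whenever $(H_{\mathbf{k}})$ is.
\end{proof}
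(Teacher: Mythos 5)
Your proof is correct and takes exactly the route the paper intends: the paper states Corollary~\ref{add_loops} as an unproved ``useful consequence of Proposition~\ref{poly_seq_preserve}'', and your choice of constants $\alpha=0$, $\beta=1$, $\alpha'=1$, $\beta'=0$ is precisely the specialization that makes the deduction work. Nothing is missing.
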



To conclude this section, we show that another unary operation on graphs preserves the property of being (strongly) polynomial, namely that of taking the line graph.

\begin{proposition}\label{line}
If $(H_{\mathbf{k}})$ is a (strongly) polynomial sequence of simple graphs, then the sequence $(L(H_{\mathbf{k}}))$ of line graphs is also (strongly) polynomial.
\end{proposition}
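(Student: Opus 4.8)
The plan is to express homomorphism counts into a line graph in terms of homomorphism counts into the original graph, and then invoke the fact (established implicitly by the common-refinement argument in Proposition~\ref{complement}) that any finite linear combination of polynomial sequences evaluated at related graphs is again polynomial. The key structural fact is that $L(H)$ has a description in terms of $H$ that is ``local'' enough to be captured by homomorphisms from small fixed graphs. Recall that $V(L(H))=E(H)$, and two edges of $H$ are adjacent in $L(H)$ precisely when they share an endpoint. So a homomorphism $G\to L(H)$ assigns to each vertex $v$ of $G$ an edge $\phi(v)\in E(H)$, subject to the constraint that adjacent vertices of $G$ receive edges of $H$ sharing a vertex.

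First I would set up the following encoding. Given a homomorphism $\phi\colon V(G)\to E(H)$, for each vertex $v\in V(G)$ choose an orientation of the edge $\phi(v)$, i.e. record an ordered pair $(f(v),g(v))$ of endpoints. This refines $\phi$ into a pair of maps $f,g\colon V(G)\to V(H)$ with $f(v)g(v)\in E(H)$ for all $v$. The adjacency condition ``$\phi(u),\phi(v)$ share a vertex'' becomes a disjunction over which endpoints coincide: $f(u)=f(v)$, or $f(u)=g(v)$, or $g(u)=f(v)$, or $g(u)=g(v)$. By inclusion--exclusion over edges of $G$ and over these four coincidence types, one can write ${\rm hom}(G,L(H))$ (up to a correction for the double-counting introduced by the two orientations of each edge, and for loops) as a signed sum, over edge subsets and colourings of $E(G)$ by coincidence-patterns, of terms of the form ${\rm hom}(G',H_{\mathbf{k}})$ where $G'$ is a fixed graph obtained from $G$ by a bounded sequence of edge contractions/identifications and deletions determined by the pattern. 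Concretely each such $G'$ is built from two disjoint copies $G_f,G_g$ of (a minor of) $G$, with selected vertices of $G_f$ and $G_g$ identified according to the coincidence data, plus edges $f(v)g(v)$ forced to lie in $H_{\mathbf{k}}$.

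Once ${\rm hom}(G,L(H_{\mathbf{k}}))$ is expressed as a finite $\mathbb{Z}$-linear combination $\sum_j c_j\,{\rm hom}(G_j,H_{\mathbf{k}})$ with $G_j$ depending only on $G$, the conclusion is immediate by exactly the argument used in Proposition~\ref{complement}: each ${\rm hom}(G_j,H_{\mathbf{k}})$ is, by hypothesis, piecewise polynomial in $\mathbf{k}$ on a finite partition $\{I_\ell(G_j)\}$ of $\mathbb{N}^h$ (a single part, with a single polynomial, in the strongly polynomial case), so on the common refinement of these finitely many partitions ${\rm hom}(G,L(H_{\mathbf{k}}))$ agrees with a polynomial in $\mathbf{k}$; in the strongly polynomial case the refinement is trivial and one gets a single polynomial. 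Thus $(L(H_{\mathbf{k}}))$ is (strongly) polynomial.

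The main obstacle I anticipate is bookkeeping rather than conceptual: getting the inclusion--exclusion over the four coincidence types exactly right, in particular handling (a) the factor of $2^{|V(G)|}$ from the choice of orientation of each $\phi(v)$, which must be divided out cleanly; (b) the possibility that $\phi(u)=\phi(v)$ as undirected edges for adjacent $u,v$ (allowed, since $L(H)$ has no loops removed issue — wait, $L(H)$ is simple, and a vertex is not adjacent to itself, so adjacent $u,v$ in $G$ may not map to the same edge, which is an extra ``$\neq$'' constraint to enforce by a further inclusion--exclusion step); and (c) ensuring each resulting $G_j$ is a genuine fixed graph (no loops created that would change the count, or if loops are created, absorbing them via the weighted framework of Proposition~\ref{poly_seq_preserve}). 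None of these is deep, but the cleanest route may be to phrase the whole computation as a single application of Proposition~\ref{poly_seq_preserve} to a suitable edge-weighted ``tensor-like'' construction on $H_{\mathbf{k}}$ whose associated homomorphism counts equal those into $L(H_{\mathbf{k}})$, thereby outsourcing all the inclusion--exclusion to that already-proved result.
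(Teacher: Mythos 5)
Your proposal is correct in spirit but takes a genuinely different route from the paper. The paper works on the side of subgraph counts rather than homomorphism counts: it invokes \cite[Prop.~5]{dlHJ95} to reduce (strong) polynomiality of $(L(H_{\mathbf{k}}))$ to showing that the number of \emph{induced} subgraphs of $L(H_{\mathbf{k}})$ isomorphic to a fixed $S$ is polynomial in $\mathbf{k}$; it then observes that every induced subgraph of a line graph on a vertex set $A\subseteq E(H)$ is itself the line graph $L((V(H),A))$, and uses Whitney's isomorphism theorem to translate ``induced copies of $L(S)$ in $L(H_{\mathbf{k}})$'' into ``subgraph copies of $S$ in $H_{\mathbf{k}}$'' (with the single exceptional case $L(S)=K_3$, where one sums over $S\in\{K_3,K_{1,3}\}$). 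Your approach instead stays entirely on the homomorphism side: lift each $\phi\colon V(G)\to E(H)$ to $2^{|V(G)|}$ ordered pairs $(f,g)$ of vertex maps into $H$, and expand the per-edge adjacency condition (intersecting endpoints, but distinct edges) as a multilinear $\{0,\pm1\}$-combination of the four equality atoms, obtaining $2^{|V(G)|}{\rm hom}(G,L(H_{\mathbf{k}}))$ as a finite integer linear combination of ${\rm hom}(G_\sigma,H_{\mathbf{k}})$, where each $G_\sigma$ is a quotient of a perfect matching on $V(G)\sqcup V(G)$ (and contributes $0$ if a loop is created, since $H_{\mathbf{k}}$ is simple). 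This gives the conclusion exactly as in Proposition~\ref{complement} after taking common refinements of the partitions. What your route buys is that it is entirely elementary and bypasses Whitney's theorem, placing the result in the same family as Propositions~\ref{complement} and~\ref{lcomplement} (a ``graph algebra'' identity expressing ${\rm hom}(G,\text{transform of }H)$ as a finite combination of ${\rm hom}(G',H)$); what it costs is the bookkeeping you flag. The paper's route is shorter and isolates the one genuine combinatorial input (Whitney) cleanly. One remark: your closing suggestion that the computation might be ``outsourced'' to Proposition~\ref{poly_seq_preserve} is unlikely to work as stated, since that proposition only handles affine reweightings of the adjacency matrix of $H_{\mathbf{k}}$ (and loop weights), which cannot change the vertex set from $V(H)$ to $E(H)$; but since you frame this as a speculation rather than a claim, it does not affect the correctness of the main argument.
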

\begin{proof}
We prove the result for strongly polynomial sequences; the result for polynomial sequences can be shown in a similar way.

Whitney's classical graph isomorphism theorem from 1932 states that $L(H)\cong L (H')$ for simple graphs $H, H'$ if and only if $H\cong H'$ with the exception only of $K_3$ and $K_{1,3}$, which both have line graph $K_3$.

By \cite[Prop. 5]{dlHJ95}, a sequence $(H_{\mathbf{k}})$ of graphs is strongly polynomial if and only if the number of subgraphs of $H_{\mathbf{k}}$ isomorphic to a given graph $S$ is polynomial in $\mathbf{k}$. Further, when each $H_{\mathbf{k}}$ is simple, this is equivalent to requiring that the number of induced subgraphs of $H_\mathbf{k}$ isomorphic to a given graph $S$ is polynomial in $\mathbf{k}$.
An induced subgraph of line graph $L(H)$ on vertices $A\subseteq E(H)$ is again a line graph, equal to $L(H_A)$ where $H_A$ is the subgraph $(V(H),A)$ of $H$.

Since each graph $H_{\mathbf{k}}$ is simple, the sequence $(L(H_{\mathbf{k}}))$ of line graphs also consists of simple graphs.
Suppose first that $S\not\cong K_3, K_{1,3}$. Then the number of occurrences of $L(S)$ in $L(H_{\mathbf{k}})$ as an induced subgraph is by Whitney's isomorphism theorem equal to the number of copies of $S$ in $H_{\mathbf{k}}$ as a subgraph, which by asssumption is polynomial in $\mathbf{k}$.
When $L(S)=K_3$, the number of induced copies of $L(S)$ in $L(H_{\mathbf{k}})$ is equal to the sum of the number of $K_3$'s and number of $K_{1,3}$'s occuring as a subgraph in $H_{\mathbf{k}}$, again polynomial in $\mathbf{k}$ by assumption.
 \end{proof}

\subsection{Products, ornamentation and composition}\label{sec:ornamentation}

In the results of this section for combining a pair of sequences $(H_{\mathbf{k}})$ indexed by $\mathbf{k}=(k_1,\ldots, k_h)\in \mathbb{N}^h$ and $(F_\mathbf{j})$ indexed by $\mathbf{j}=(j_1,\dots, j_\ell)\in \mathbb{N}^{\ell}$ we make the following assumptions. 
For each $1\leq s\leq h$ either $k_s=j_t$ for some $1\leq t\leq \ell$ or $k_s$ is independent of $j_t$ for each $1\leq t\leq \ell$. Likewise, each $j_t$ is either equal to one of the $k_s$ or is independent of all of them.
After combining the sequences $(H_{\mathbf{k}})$ and $(F_{\mathbf{j}})$ we then obtain a sequence indexed by variables $\{k_s:1\leq s\leq h\}\cup\{j_t:1\leq t\leq \ell\}$, but we shall simply say the sequence obtained is ``indexed by $(\mathbf{j},\mathbf{k})$'', the elimination of repeated variables being understood. In particular, when $\mathbf{j}=\mathbf{k}$, the result of combining sequences $(F_{\mathbf{k}})$ and $(H_{\mathbf{k}})$ will be another sequence indexed by $\mathbf{k}$. 

The \emph{categorical product} (or {\em direct product}) of two graphs $G$ and $H$, denoted by $G\times H$, is the graph with vertex set $V(G)\times V(H)$, two vertices $(u,v)$, $(u',v')$ being adjacent if $uu'\in E(G)$ and $vv'\in E(H)$.

\begin{proposition}\label{disjoint_union}
If $(F_{\mathbf{j}})$ and $(H_{\mathbf{k}})$ are (strongly) polynomial sequences of graphs in $\mathbf{j}$ and $\mathbf{k}$ respectively, 
 then the sequences $(F_{\mathbf{j}}\cup H_{\mathbf{k}})$ and $(F_{\mathbf{j}}\times H_{\mathbf{k}})$ obtained by disjoint union and categorical product are (strongly) polynomial in $(\mathbf{j},\mathbf{k})$.
\end{proposition}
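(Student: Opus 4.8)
The plan is to establish both claims through the standard homomorphism-counting identities for the two product operations, reducing each statement to the already-established fact that $(F_{\mathbf{j}})$ and $(H_{\mathbf{k}})$ are (strongly) polynomial. For the categorical product, the key fact is the multiplicativity of homomorphism counts: for any graph $G$,
\[
{\rm hom}(G, F_{\mathbf{j}}\times H_{\mathbf{k}}) = {\rm hom}(G,F_{\mathbf{j}})\cdot{\rm hom}(G,H_{\mathbf{k}}),
\]
which follows immediately from the definition of the categorical product (a homomorphism into the product is precisely a pair of homomorphisms into the factors, coordinate-wise). For the disjoint union, the relevant identity is
\[
{\rm hom}(G, F_{\mathbf{j}}\cup H_{\mathbf{k}}) = \sum_{G = G_1\cup G_2}{\rm hom}(G_1,F_{\mathbf{j}})\cdot{\rm hom}(G_2,H_{\mathbf{k}}),
\]
where the sum ranges over all ways of writing $G$ as a disjoint union $G_1\cup G_2$ of two (possibly empty) induced subgraphs on complementary vertex sets; this holds because the image of any homomorphism from $G$ to $F_{\mathbf{j}}\cup H_{\mathbf{k}}$ lies entirely in one component for each connected component of $G$, so each such homomorphism distributes the components of $G$ between the two target components.

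First I would record these two identities and verify them (a one-line check in each case). Next, for the strongly polynomial case: by hypothesis there are polynomials $p(G;\mathbf{j})$ and $q(G;\mathbf{k})$ with ${\rm hom}(G,F_{\mathbf{j}})=p(G;\mathbf{j})$ and ${\rm hom}(G,H_{\mathbf{k}})=q(G;\mathbf{k})$ for all indices. For the categorical product, $p(G;\mathbf{j})\cdot q(G;\mathbf{k})$ is a polynomial in the combined variable tuple $(\mathbf{j},\mathbf{k})$ (with repeated variables identified as per the convention set out just before the proposition), giving the result directly. For the disjoint union, the finite sum $\sum_{G=G_1\cup G_2}p(G_1;\mathbf{j})q(G_2;\mathbf{k})$ is again a polynomial in $(\mathbf{j},\mathbf{k})$, since the number of decompositions of $G$ is finite. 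Then, for the merely polynomial case, I would invoke the same mechanism as in the proof of Proposition~\ref{complement}: for each $G$ (resp. each pair $G_1,G_2$) there is a partition of the respective index set into finitely many parts on which ${\rm hom}$ agrees with a polynomial; taking the common refinement of all the partitions arising from the finitely many subgraphs involved, and pulling these back along the projections to a partition of the combined index set, yields a finite partition on each part of which ${\rm hom}(G,F_{\mathbf{j}}\cup H_{\mathbf{k}})$ (resp. the categorical-product count) is a polynomial in $(\mathbf{j},\mathbf{k})$.

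I expect the main obstacle — really the only point requiring care rather than routine verification — to be the bookkeeping around the index-variable convention: when $\mathbf{j}$ and $\mathbf{k}$ share variables (in the extreme case $\mathbf{j}=\mathbf{k}$), one must check that identifying the shared variables in the product polynomial still yields a polynomial in the reduced variable set and that the partition refinement is carried out in the correct (reduced) index space $\mathbb{N}^{|\{k_s\}\cup\{j_t\}|}$ rather than in $\mathbb{N}^{h}\times\mathbb{N}^{\ell}$. This is handled by noting that substitution of variables into a polynomial gives a polynomial, and that pulling back a partition of $\mathbb{N}^h$ (or $\mathbb{N}^\ell$) along the coordinate projection from the reduced index space is a partition into finitely many pieces; taking the common refinement over all finitely many relevant subgraphs then completes the argument exactly as before. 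Everything else is immediate from the two counting identities.
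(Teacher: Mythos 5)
Your proof is correct and follows essentially the same strategy as the paper's: establish the elementary homomorphism-counting identities for disjoint union and categorical product, then propagate polynomiality (in the strongly polynomial case directly, in the merely polynomial case via common refinement of the finitely many relevant partitions). The only cosmetic difference is in the disjoint-union step: the paper first reduces to connected $G$ (using multiplicativity of ${\rm hom}$ over components of $G$) and then uses the additive identity ${\rm hom}(G,F\cup H)={\rm hom}(G,F)+{\rm hom}(G,H)$ for connected $G$, whereas you state the equivalent identity for arbitrary $G$ as a finite sum over decompositions $G=G_1\cup G_2$ into induced subgraphs on complementary vertex sets with no edges between. Both arrangements are standard and interchangeable; the paper's reduction to connected $G$ is marginally shorter, while your version avoids an explicit reduction step at the cost of one extra (easy) identity. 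Your remarks on the shared-variable bookkeeping match the convention the paper sets out just before the proposition.
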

\begin{proof}
When $G=G_1\cup G_2$ we have $${\rm hom}(G,F_{\mathbf{j}}\cup H_{\mathbf{k}})={\rm hom}(G_1,F_{\mathbf{j}}\cup H_{\mathbf{k}}){\rm hom}(G_2,F_{\mathbf{j}}\cup H_{\mathbf{k}}).$$ Thus, it suffices to consider a connected graph $G$, where ${\rm hom}(G,F_{\mathbf{j}}\cup H_{\mathbf{k}})={\rm hom}(G,F_{\mathbf{j}})+{\rm hom}(G,H_{\mathbf{k}})$, because the image of $G$ under a homomorphism is connected.

Let $\mathbf{j}$ range over $\mathbb{N}^\ell$ and $\mathbf{k}$ range over $\mathbb{N}^h$. Given a partition $\{I_1,\ldots, I_r\}$ of $\mathbb{N}^{\ell}$ and partition $\{J_1,\ldots, J_s\}$ of $\mathbb{N}^{h}$ into a finite number of subsets, the partition $\{I_t\times J_u:1\leq t\leq r, 1\leq u\leq s\}$ of $\mathbb{N}^{\ell+h}$ also has a finite number of subsets.
Suppose that ${\rm hom}(G,F_{\mathbf{j}})=p_t(G; {\mathbf{j}})$ for ${\mathbf{j}}\in I_t$, and ${\rm hom}(G,H_{\mathbf{k}})=q_u(G; {\mathbf{k}})$ for ${\mathbf{k}}\in J_u$. Then, ${\rm hom}(G,F_{\mathbf{j}})+{\rm hom}(G,H_{\mathbf{k}})=p_t(G; {\mathbf{j}})+q_u(G; {\mathbf{k}})$ when $(\mathbf{j},\mathbf{k})\in I_t\times J_u$. Hence $(F_{\mathbf{j}}\cup H_{\mathbf{k}})$ is a polynomial sequence in $(\mathbf{j},\mathbf{k})$ (strongly polynomial if both $(F_{\mathbf{j}})$ and $(H_{\mathbf{k}})$ are strongly polynomial).

The assertion for the categorical product follows in a similar way and from the fact that ${\rm hom}(G,F_{\mathbf{j}}\times H_{\mathbf{k}})=$ ${\rm hom}(G,F_{\mathbf{j}}){\rm hom}(G,H_{\mathbf{k}})$.
\end{proof}


The {\em join} of two graphs $G_1, G_2$ is defined by taking their disjoint union and then adding edges $v_1v_2$ for every pair $v_1\in V(G_1), v_2\in V(G_2)$.

\begin{corollary}\label{join}
If $(F_{\mathbf{j}})$ and $(H_{\mathbf{k}})$ are (strongly) polynomial sequences of graphs in $\mathbf{j}$ and $\mathbf{k}$ respectively,
 then the sequence $(F_{\mathbf{j}}+ H_{\mathbf{k}})$ of graph joins is (strongly) polynomial in $(\mathbf{j},\mathbf{k})$.
\end{corollary}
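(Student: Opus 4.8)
The plan is to reduce the join construction to the looped-complement and disjoint-union operations already established, which together with Proposition~\ref{lcomplement} and Proposition~\ref{disjoint_union} will yield the result almost immediately. The key observation is that for two simple graphs $G_1$ and $G_2$, the join $G_1+G_2$ can be described via complementation: $\overline{G_1+G_2} = \overline{G_1}\cup\overline{G_2}$ (disjoint union), where the complement is taken over the full vertex set $V(G_1)\cup V(G_2)$. If we want to stay within the looped setting (to avoid the subtlety that complementation as stated in Proposition~\ref{complement} is about simple graphs), one can instead work with the looped complement: forming $\widetilde{H_{\mathbf{k}}}$, taking disjoint unions, and taking the looped complement again recovers the join (with loops appropriately handled, or with Corollary~\ref{remove_loops}/Corollary~\ref{add_loops} used to clean up loops at the end).

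Concretely, here is the sequence of steps I would carry out. First, given the (strongly) polynomial sequences $(F_{\mathbf{j}})$ and $(H_{\mathbf{k}})$, apply Proposition~\ref{lcomplement} to obtain that $(\widetilde{F_{\mathbf{j}}})$ and $(\widetilde{H_{\mathbf{k}}})$ are (strongly) polynomial. Second, apply Proposition~\ref{disjoint_union} to conclude that the disjoint union sequence $(\widetilde{F_{\mathbf{j}}}\cup\widetilde{H_{\mathbf{k}}})$ is (strongly) polynomial in $(\mathbf{j},\mathbf{k})$. Third, observe that $\widetilde{\widetilde{F_{\mathbf{j}}}\cup\widetilde{H_{\mathbf{k}}}}$ — the looped complement of that disjoint union — is precisely $F_{\mathbf{j}}+H_{\mathbf{k}}$ with a loop on every vertex: in the looped complement of a disjoint union of two looped complements, an edge between $u\in V(F_{\mathbf{j}})$ and $v\in V(H_{\mathbf{k}})$ is present iff it was absent in the disjoint union, which it always is, so all cross edges appear; within $V(F_{\mathbf{j}})$ the double looped-complement returns the original adjacency (including putting a loop on every vertex), and similarly within $V(H_{\mathbf{k}})$. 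Applying Proposition~\ref{lcomplement} once more shows this sequence is (strongly) polynomial. Fourth, use Corollary~\ref{remove_loops} to strip off the loops and recover $(F_{\mathbf{j}}+H_{\mathbf{k}})$ itself as a (strongly) polynomial sequence.

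An alternative, perhaps cleaner, route avoids loops entirely: combine the identity $\overline{G_1+G_2}=\overline{G_1}\cup\overline{G_2}$ with Proposition~\ref{complement} directly. Starting from $(F_{\mathbf{j}})$ and $(H_{\mathbf{k}})$, take complements to get $(\overline{F_{\mathbf{j}}})$ and $(\overline{H_{\mathbf{k}}})$ (Proposition~\ref{complement}), take their disjoint union (Proposition~\ref{disjoint_union}), and take the complement again (Proposition~\ref{complement}); the result is $F_{\mathbf{j}}+H_{\mathbf{k}}$ exactly. One caveat to check here is that Proposition~\ref{complement} as stated requires simple graphs and produces simple graphs, so one should confirm that $\overline{F_{\mathbf{j}}}\cup\overline{H_{\mathbf{k}}}$ is simple, which it is; so this route works whenever $F_{\mathbf{j}}$ and $H_{\mathbf{k}}$ are simple, which is the relevant case for the corollary as it is about graph joins in the ordinary sense.

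I do not anticipate a genuine obstacle here, since the corollary follows formally from results proved immediately above it; the only thing requiring a little care is bookkeeping about loops and about whether the relevant complement operation (looped versus ordinary) matches the hypotheses of the proposition being invoked. The cleanest writeup is to state the identity $\overline{G_1+G_2}=\overline{G_1}\cup\overline{G_2}$, note it holds with the union taken over $V(G_1)\cup V(G_2)$, and then chain Propositions~\ref{complement} and~\ref{disjoint_union}. If one prefers the looped-complement route (which has the minor advantage of being uniform with the weighted setting of Proposition~\ref{poly_seq_preserve}), the extra step of invoking Corollary~\ref{remove_loops} at the end handles the stray loops.
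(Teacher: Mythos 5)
Your preferred route---the identity $\overline{F_{\mathbf{j}}+H_{\mathbf{k}}}=\overline{F_{\mathbf{j}}}\cup\overline{H_{\mathbf{k}}}$ combined with Propositions~\ref{complement} and~\ref{disjoint_union}---is exactly the paper's one-line proof, so the proposal is correct and takes the same approach. One small slip in your alternative looped route: since $\widetilde{F_{\mathbf{j}}}$ and $\widetilde{H_{\mathbf{k}}}$ carry a loop at every vertex, the outer looped complement $\widetilde{\widetilde{F_{\mathbf{j}}}\cup\widetilde{H_{\mathbf{k}}}}$ already has no loops and equals $F_{\mathbf{j}}+H_{\mathbf{k}}$ directly, so the final appeal to Corollary~\ref{remove_loops} is unnecessary.
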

\begin{proof}
Since $F+H=\overline{\overline{F}\cup\overline{H}}$ this follows from Propositions~\ref{complement} and~\ref{disjoint_union}.
\end{proof}

For a fixed simple graph $H$, a family of (possibly weighted) graphs $\{F_v:v\in V(H)\}$ that are indexed by vertices of $H$ is  an {\em ornamentation} of $H$. By an \emph{ornamented graph} we mean a graph together with an ornamentation of its vertices. 
We extend the notion of ornamentation to graph sequences: an ornamentation of a graph sequence $(H_{\mathbf{k}})$ by a family  $\{(F_{v;{\mathbf{j}}}:{\mathbf{j}}\in\mathbb{N}^{h})\}$ of graph sequences assigns, for each given value ${\mathbf{j}}\in\mathbb{N}^{h}$,  the graph $F_{v;{\mathbf{j}}}$ to vertex $v\in V(H_{\mathbf{k}})$. This gives a graph sequence indexed by $(\mathbf{j},\mathbf{k})$.

From an ornamentation of a graph $H$ we form the {\em composition} 
by taking the disjoint union of the ornaments $F_{v}$, $v\in V(H)$, and adding edges to form the graph join of $F_{u}$ and $F_{v}$ whenever $uv\in E(H)$. This composition is denoted by $H[\{F_v\}]$. Note that we distinguish between an ornamented graph  and the graph obtained after composition.

It is remarked in~\cite[Ex. B7]{dlHJ95} that the composition of a fixed graph $H$ with polynomial sequences as ornaments gives a polynomial sequence. For completeness we formulate this in the following slightly more general form:
\begin{proposition}\label{composition}
Let $H$ be a simple graph and $h\in\mathbb{N}$ be fixed.
  For each $v\in V(H)$, let $(F_{v;{\mathbf{j}_v}})$ be a (strongly) polynomial graph sequence in $\mathbf{j}_v\in\mathbb{N}^{h_v}$.
Then $(H[\{F_{v;{\mathbf{j}_v}}\}])$ is a (strongly) polynomial sequence in $(\mathbf{j}_v:v\in V(H))$. 
   \end{proposition}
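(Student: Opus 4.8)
The plan is to reduce $\mathrm{hom}(G, H[\{F_{v;\mathbf{j}_v}\}])$ to a finite sum of products of homomorphism numbers into the individual ornaments $F_{v;\mathbf{j}_v}$, and then invoke Proposition~\ref{disjoint_union} (or rather the bookkeeping from its proof) to combine these. First I would fix a connected graph $G$; since $\mathrm{hom}$ is multiplicative over connected components of $G$ and over disjoint unions in the target, it suffices to handle connected $G$. A homomorphism $f : V(G) \to V(H[\{F_{v;\mathbf{j}_v}\}])$ assigns to each vertex $x \in V(G)$ a pair consisting of a vertex $\varphi(x) \in V(H)$ together with a vertex of $F_{\varphi(x);\mathbf{j}_{\varphi(x)}}$. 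The key observation is that the map $\varphi : V(G) \to V(H)$ obtained this way is \emph{not} necessarily a homomorphism of $G$ to $H$, but it is a homomorphism to the graph $H^\circ$ obtained from $H$ by adding a loop at every vertex: an edge $xy$ of $G$ with $\varphi(x)=\varphi(y)=v$ is allowed provided the chosen endpoints lie on an edge of $F_{v;\mathbf{j}_v}$, while an edge $xy$ with $\varphi(x)=u\neq v=\varphi(y)$ is always allowed (because of the join) provided $uv \in E(H)$.

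The second step is to organize the sum over $f$ by first summing over all maps $\varphi : V(G) \to V(H)$ that are homomorphisms $G \to H^\circ$ (finitely many, since $H$ is fixed), and then, for each such $\varphi$, summing over the choices of vertices within the ornaments. For a fixed $\varphi$, let $G_v = \varphi^{-1}(v)$ induce a subgraph $G[G_v]$ of $G$; the edges of $G$ within $G_v$ must be mapped to edges of $F_{v;\mathbf{j}_v}$, while edges of $G$ between distinct classes impose no further constraint. Hence the inner sum factorizes as $\prod_{v \in V(H)} \mathrm{hom}(G[\varphi^{-1}(v)], F_{v;\mathbf{j}_v})$, and therefore
\begin{equation}
\mathrm{hom}(G, H[\{F_{v;\mathbf{j}_v}\}]) = \sum_{\varphi : G \to H^\circ} \ \prod_{v \in V(H)} \mathrm{hom}\bigl(G[\varphi^{-1}(v)], F_{v;\mathbf{j}_v}\bigr),
\end{equation}
a finite sum (over $\varphi$) of products of homomorphism numbers into the ornament sequences, each evaluated at a graph depending only on $G$ and $\varphi$, not on the parameters.

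The third step is the combinatorial bookkeeping already used in Proposition~\ref{disjoint_union}. By hypothesis each $(F_{v;\mathbf{j}_v})$ is polynomial, so for the fixed graph $G[\varphi^{-1}(v)]$ there is a partition of $\mathbb{N}^{h_v}$ into finitely many parts on each of which $\mathrm{hom}(G[\varphi^{-1}(v)], F_{v;\mathbf{j}_v})$ agrees with a polynomial in $\mathbf{j}_v$. Taking, over all $v \in V(H)$ and all $\varphi : G \to H^\circ$, the common refinement of all these partitions (a partition of $\prod_v \mathbb{N}^{h_v}$ into finitely many parts, the finiteness being crucial and guaranteed since $V(H)$ and the number of maps $\varphi$ are both finite), we get a single finite partition of the combined index set on each part of which the right-hand side above is a polynomial in $(\mathbf{j}_v : v \in V(H))$ — products and finite sums of polynomials being polynomials. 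This shows $(H[\{F_{v;\mathbf{j}_v}\}])$ is polynomial; if every $(F_{v;\mathbf{j}_v})$ is strongly polynomial, every partition above is trivial, so the right-hand side is a single polynomial and the composition sequence is strongly polynomial.

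The main obstacle, and the only genuinely non-routine point, is the first claim in the second step: that the "shadow" map $\varphi$ of a homomorphism $f$ into the composition need not be a homomorphism of $G$ into $H$ but only into its reflexive closure $H^\circ$, and that this is exactly the right target for the factorization to be clean. Getting this identification right — in particular noting that an edge of $G$ collapsed by $\varphi$ to a single vertex $v$ contributes a factor counted by $\mathrm{hom}(G[\varphi^{-1}(v)], F_{v;\mathbf{j}_v})$ with the induced (loopless) subgraph $G[\varphi^{-1}(v)]$, while collapsed edges between classes are automatically satisfied — is what makes the factorization over $v \in V(H)$ valid. Everything after that is the same partition-refinement argument as in Propositions~\ref{complement} and~\ref{disjoint_union}, and I would simply cite that reasoning rather than repeat it.
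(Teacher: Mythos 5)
Your proof is correct and follows essentially the same approach as the paper: the key identity you derive (with $\varphi$ ranging over homomorphisms $G\to H^\circ$, where $H^\circ$ is $H$ with a loop on each vertex, i.e., the paper's $H[\{K_1^1\}]$) is exactly the paper's equation~\eqref{eq:composition}, and the remaining partition-refinement bookkeeping is the same as in Propositions~\ref{complement} and~\ref{disjoint_union}. The paper simply states the formula and leaves the verification implicit; you spell out the derivation (the reduction to connected $G$ is harmless but not actually needed, since the factorization holds for arbitrary $G$).
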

\begin{proof} To prove this result use the formula
\be\label{eq:composition}{\rm hom}(G,H[\{F_{v;{\mathbf{j}_v}}\}])=\sum_{\stackrel{f:G\rightarrow H[\{K_1^1\}]}{\mbox{\rm \tiny homomorphism}}}\;\;\prod_{v\in V(H)}{\rm hom}(G[f^{-1}(\{v\})],F_{v;{\mathbf{j}_v}}),\ee
where $K_1^1$ denotes the graph consisting of one vertex with a loop attached (and thus $H[\{K_1^1\}]$ is the graph $H$ with a loop attached to each vertex), and $G[f^{-1}(\{v\})]$ denotes the induced subgraph of $G$ on vertices in the pre-image of $v$ under homomorphism $f$.
\end{proof}

Notice that Proposition~\ref{disjoint_union} for disjoint unions is the special case $H=K_1\cup K_1$ in Proposition~\ref{composition} with vertices ornamented by sequences $(F_{\mathbf{j}})$ and $(H_{\mathbf{k}})$ and Corollary~\ref{join} is the special case $K_2$ with the same ornamentation.

Proposition~\ref{composition} also includes an important special case, namely that of {\em blow-ups}. Given a graph $H$ and $\mathbf{k}\in \mathbb{N}^{V(H)}=(k_s:s\in V(H))$, the {\em $\mathbf{k}$-blow-up} of $H$ is obtained from $H$ by replacing every vertex $s\in V(H)$ with $k_s$ distinct twin vertices, where a copy of $s$ is adjacent to a copy of $t$ in the blow-up graph if and only if $s$ is adjacent to $t$ in $H$. When there are no loops this is precisely the composition $H[\{\overline{K_{k_s}}:s\in V(H)\}]$, and by Proposition~\ref{composition} the sequence $(\, H[\{\overline{K_{k_s}}:s\in V(H)\}]\,)$ is strongly polynomial in $\mathbf{k}$. When $s$ has a loop we ornament $s$ with $K_{k_s}$ instead of $\overline{K_{k_s}}$.
In particular, starting from a graph $H$, the operation of blowing up a vertex $k$-fold produces a strongly polynomial sequence $(H_k)$. 

The {\em lexicographic product} $G_1[G_2]$ of two simple graphs $G_1$ and $G_2$ is the simple graph with vertex set $V(G_1)\times V(G_2)$, two vertices $(u_1,u_2)$ and $(v_1, v_2)$ being adjacent if either $u_1v_1\in E(G_1)$ or $u_1=v_1$ and $u_2v_2\in E(G_2)$. This corresponds to ornamenting the vertices of $G_1$ each with a copy of the graph $G_2$ and forming the composition.

An extension of Proposition~\ref{composition} replacing the fixed graph $H$ by a sequence of graphs $(H_{\mathbf{k}})$ is the following:

\begin{proposition}\label{lexicographic}
If $(F_{\mathbf{j}})$ and $(H_{\mathbf{k}})$ are (strongly) polynomial sequences in $\mathbf{j}$ and ${\mathbf{k}}$ respectively, then the sequence $(H_{\mathbf{k}}[F_{\mathbf{j}}])$ of lexicographic products is (strongly) polynomial in $(\mathbf{j},\mathbf{k})$.
\end{proposition}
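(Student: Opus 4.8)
The plan is to imitate the proof of Proposition~\ref{composition}, extending the homomorphism-counting formula \eqref{eq:composition} to the case where the ``outer'' graph is itself a member of a polynomial sequence $(H_{\mathbf{k}})$ rather than a fixed graph $H$. The starting point is the observation that the lexicographic product $H_{\mathbf{k}}[F_{\mathbf{j}}]$ is precisely the composition of $H_{\mathbf{k}}$ with every vertex ornamented by a copy of $F_{\mathbf{j}}$. Hence \eqref{eq:composition} applied with the outer graph $H_{\mathbf{k}}$ yields
\[
{\rm hom}(G,H_{\mathbf{k}}[F_{\mathbf{j}}])=\sum_{\stackrel{f:G\rightarrow H_{\mathbf{k}}[\{K_1^1\}]}{\mbox{\rm \tiny homomorphism}}}\;\prod_{v\in V(H_{\mathbf{k}})}{\rm hom}(G[f^{-1}(\{v\})],F_{\mathbf{j}}).
\]
The summand depends on $f$ only through the \emph{partition} of (a subset of) $V(G)$ into the fibres $f^{-1}(\{v\})$, so the first step is to regroup this sum according to the isomorphism type of the multiset of induced subgraphs $\{G[f^{-1}(\{v\})]\}$. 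For a fixed such "pattern" $P$ (a partition of a subset $W\subseteq V(G)$, together with the requirement that vertices in different blocks that are adjacent in $G$ must be mapped to adjacent vertices of $H_{\mathbf{k}}$), the number of homomorphisms $f:G\to H_{\mathbf{k}}[\{K_1^1\}]$ realising that pattern is itself a homomorphism-counting quantity for an auxiliary graph built from the blocks of $P$; concretely it is (up to a combinatorial factor) ${\rm hom}(G_P, H_{\mathbf{k}}[\{K_1^1\}])$ where $G_P$ is obtained from $G$ by contracting each block of $P$ to a single vertex (and then $G[\{K_1^1\}]$-homomorphisms account for the loops permitting several $G$-vertices of one block to land on one $H_{\mathbf{k}}$-vertex). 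Since there are only finitely many patterns $P$ for a fixed $G$, and by Corollary~\ref{add_loops} the sequence $(H_{\mathbf{k}}[\{K_1^1\}])$ is again (strongly) polynomial, each such inner count ${\rm hom}(G_P,H_{\mathbf{k}}[\{K_1^1\}])$ is a polynomial in $\mathbf{k}$ (on each piece of a finite partition of $\mathbb{N}^h$, in the merely polynomial case).

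The second ingredient is the factor $\prod_{v}{\rm hom}(G[f^{-1}(\{v\})],F_{\mathbf{j}})$. For a fixed pattern $P$ this product is a fixed finite product $\prod_{B\in P}{\rm hom}(G[B],F_{\mathbf{j}})$ of homomorphism numbers of fixed graphs $G[B]$ into $F_{\mathbf{j}}$; by hypothesis each ${\rm hom}(G[B],F_{\mathbf{j}})$ is a polynomial in $\mathbf{j}$ (again, on each piece of a finite partition of $\mathbb{N}^{\ell}$), so the product is as well, being a product of finitely many polynomials. Thus for each of the finitely many patterns $P$ we have written the corresponding contribution to ${\rm hom}(G,H_{\mathbf{k}}[F_{\mathbf{j}}])$ as a product of a polynomial in $\mathbf{k}$ and a polynomial in $\mathbf{j}$, hence a polynomial in $(\mathbf{j},\mathbf{k})$, valid on a product $I_t\times J_u$ of pieces from a finite partition of $\mathbb{N}^{\ell}$ and a finite partition of $\mathbb{N}^{h}$. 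Taking the common refinement of these finitely many partitions (one pair per pattern $P$) over all patterns, and summing the finitely many polynomial contributions on each cell of the refined product partition of $\mathbb{N}^{\ell+h}$, shows that ${\rm hom}(G,H_{\mathbf{k}}[F_{\mathbf{j}}])$ agrees with one of finitely many polynomials in $(\mathbf{j},\mathbf{k})$, i.e.\ the sequence is polynomial; and if both $(F_{\mathbf{j}})$ and $(H_{\mathbf{k}})$ are strongly polynomial, every partition involved is trivial and we obtain a single polynomial, so the sequence is strongly polynomial.

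I expect the main obstacle to be the bookkeeping in the first step: making precise the reduction of ``count homomorphisms $f:G\to H_{\mathbf{k}}[\{K_1^1\}]$ with a prescribed fibre pattern'' to a genuine homomorphism number ${\rm hom}(G_P,H_{\mathbf{k}}[\{K_1^1\}])$, including correctly handling the adjacency constraints between distinct blocks, the possibility that two blocks map to the same vertex of $H_{\mathbf{k}}$ (absorbed by the loops in $H_{\mathbf{k}}[\{K_1^1\}]$, which is why passing to the looped graph via Corollary~\ref{add_loops} is the right move), and the combinatorial multiplicities relating a ``labelled'' pattern to an ``unlabelled'' one. The cleanest route is probably to avoid an explicit change-of-variables formula altogether and instead argue structurally: note that $H_{\mathbf{k}}[F_{\mathbf{j}}]$ is, for \emph{each fixed} $\mathbf{j}$, the composition of the fixed-in-$\mathbf{j}$ graph sequence $(H_{\mathbf{k}})$ with the finitely-many fixed ornaments $F_{\mathbf{j}}$, and then invoke a hybrid of the proof of Proposition~\ref{composition} (which handles the outer variable $\mathbf{k}$) with the proof of Proposition~\ref{disjoint_union} (which handles how the two index-partitions combine on $\mathbb{N}^{\ell+h}$); everything else is routine. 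Once the formula of the first paragraph is in hand, the remainder is purely a matter of ``product of polynomials is a polynomial'' together with ``common refinement of finitely many finite partitions is finite.''
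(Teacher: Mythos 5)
Your overall strategy --- start from equation~\eqref{eq:composition} with $H$ replaced by the sequence member $H_{\mathbf{k}}$, then regroup the sum over $f$ by the combinatorial type of the fibres --- is the same high-level plan as the paper's proof. But the specific reduction you propose in the first step is wrong as stated, and you have missed the single observation that lets the paper close the argument without any of the bookkeeping you are worried about.

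The error: for a fixed fibre pattern $P$, the number of homomorphisms $f:G\to H_{\mathbf{k}}[\{K_1^1\}]$ whose non-empty fibres are \emph{exactly} the blocks of $P$ is not ${\rm hom}(G_P,H_{\mathbf{k}}[\{K_1^1\}])$. That quantity includes maps sending two distinct blocks of $P$ to the same vertex of $H_{\mathbf{k}}$ (permitted by the loops), and such $f$ have a strictly coarser fibre partition, so you are overcounting. What you want is the number of \emph{injective} homomorphisms from $G_P$ into $H_{\mathbf{k}}$, which is a $\mathbb{Z}$-linear combination of ${\rm hom}(G_P/D,H_{\mathbf{k}}[\{K_1^1\}])$ over further contractions $D$ by M\"obius inversion; that is polynomial in $\mathbf{k}$, so your argument can be repaired, but the ``up to a combinatorial factor'' hedge does not cover this gap and you would need to carry out the inversion.

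The observation you are missing, which lets the paper skip all of this: ${\rm hom}(\cdot,F_{\mathbf{j}})$ is multiplicative over disjoint unions, so the product $\prod_{v}{\rm hom}(G[f^{-1}(\{v\})],F_{\mathbf{j}})$ collapses to ${\rm hom}\bigl(\bigsqcup_v G[f^{-1}(\{v\})],F_{\mathbf{j}}\bigr)$, and $\bigsqcup_v G[f^{-1}(\{v\})]$ is a \emph{spanning subgraph} of $G$ (it keeps exactly the edges $uv\in E(G)$ with $f(u)=f(v)$). This replaces your ``pattern'' $P$ --- a full partition --- by the much coarser data of which edges of $G$ are internal to a fibre, which is all the $\mathbf{j}$-dependence can see. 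There are finitely many spanning subgraphs of $G$, each contributes a polynomial in $\mathbf{j}$ that is manifestly independent of $\mathbf{k}$ (the independence point the paper emphasises, which your pattern framework is also circling around), and the number of $f$ producing each spanning subgraph is polynomial in $\mathbf{k}$ since $(H_{\mathbf{k}}[\{K_1^1\}])$ is (strongly) polynomial by Corollary~\ref{add_loops} (again an inclusion--exclusion over contractions of $G$, but now entirely on the $\mathbf{k}$-side and with no reference to $F_{\mathbf{j}}$). I would redo the middle of your argument using this multiplicativity trick; it makes the rest routine, as you anticipated, and avoids the inversion on patterns. See also the discussion around Remark~\ref{rem:non_poly} for why the independence from $v$ (hence from $\mathbf{k}$) of the $\mathbf{j}$-factor is the genuinely delicate point here.
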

\begin{proof}
We prove the result for strongly polynomial sequences, the case of polynomial sequences being similar, only notationally more complicated.

By equation~\eqref{eq:composition},
\begin{align*} {\rm hom}(G,H_{\mathbf{k}}[\{F_{\mathbf{j}}\}]) & =\sum_{\stackrel{f:G\rightarrow H_{\mathbf{k}}[\{K_1^1\}]}{\mbox{\rm \tiny homomorphism}}}\;\;\prod_{v\in V(H_{\mathbf{k}})}{\rm hom}(G[f^{-1}(\{v\})], F_{\mathbf{j}})\\
 & =\sum_{\stackrel{f:G\rightarrow H_{\mathbf{k}}[\{K_1^1\}]}{\mbox{\rm \tiny homomorphism}}}\;\;{\rm hom}\left(\bigsqcup_{v\in V(H_{\mathbf{k}})}\!\!G[f^{-1}(\{v\})],F_{\mathbf{j}}\right).\end{align*}

 Each term in this sum is of the form ${\rm hom}(G',F_{\mathbf{j}})$ for spanning subgraph $G'$ of $G$, and hence by assumption equal to a polynomial in $\mathbf{j}$. Furthermore, this polynomial is independent of any particular vertex $v\in V(H_{\mathbf{k}})$ (this is important so as not to have the situation where this polynomial, which is evaluated at $\mathbf{j}$, is in some way itself dependent on $\mathbf{k}$, which is problematic should $\mathbf{j}$ and $\mathbf{k}$ be dependent (e.g. if $\mathbf{j}=\mathbf{k}$) -- see discussion in Remark~\ref{rem:non_poly} below).
 Since by Corollary~\ref{add_loops}, ${\rm hom}(G, H_{\mathbf{k}}[\{K_1^1\}])$ is the evaluation of a polynomial in ${\mathbf{k}}$, we have ${\rm hom}(G,H_{\mathbf{k}}[\{F_{\mathbf{j}}\}])$ equal to a sum comprising polynomial in ${\mathbf{k}}$ number of summands, each of which is a polynomial in ${\mathbf{j}}$, and hence itself equal to a polynomial in $(\mathbf{j},{\mathbf{k}})$.
\end{proof}

\begin{remark}\label{rem:non_poly}
Consider the sequence of graphs $(H_k)$, $k\in \mathbb{N}$, with $V(H_k)=[k]$ and where vertex $v\in [k]$ is an isolated vertex if $v\neq 2^\ell$ for some $\ell$, and an isolated vertex with a loop if $v=2^\ell$. The graph $H_k$ is the composition of $\overline{K}_k$ ornamented with $K_1^1$ on vertices equal to $2^\ell$ for some $\ell$, and $K_1$ on remaining vertices. The sequence $(\overline{K}_k)$ is strongly polynomial, as are the sequences $(K_1^1)$ and $(K_1)$ (being constant), but ${\rm hom}(G,H_k)$ is not polynomial in $k$. 
This indicates that Proposition~\ref{lexicographic} might be the best we could hope for as an extension of Proposition~\ref{composition} to ornamentations of a sequence $(H_{\mathbf{k}})$ (rather than a fixed graph $H$), for as soon as we allow more than one possible ornament graph the property of polynomiality may be destroyed. However, in Section~\ref{sec:branching}, we shall see that for a large class of sequences $(H_{\mathbf{k}})$ there are other ways to ornament each vertex of $H_{\mathbf{k}}$ by the $\mathbf{k}$th term of a polynomial sequence of graphs so that the resulting sequence of compositions is again a polynomial sequence (Theorem~\ref{thm:poly_branching}). 
\end{remark}

\subsection{Examples}\label{examples}
As in~\cite{dlHJ95} an essential part of this paper consists of examples.
We give some concrete instances of polynomial sequences of graphs that can be obtained from basic polynomial sequences such as $(\overline{K}_k)$ (which are easy to verify are polynomial) and the use of such operations as complementation and composition described in the previous two sections.

\begin{example} Blowing up $k$-fold the graph $K_1$ gives the sequence $(\overline{K_k})$ of cocliques, which is strongly polynomial; indeed, ${\rm hom}(G,\overline{K}_k)=0$ unless $G$ is edgeless, in which case it is equal to $k^{|V(G)|}$. By Proposition \ref{lcomplement}, the sequence $\widetilde{\overline{K}}_k=K_k^1$ of looped complete graphs is also strongly polynomial, and indeed ${\rm hom}(G,K_k^1)=k^{|V(G)|}$ for all graphs $G$.
Alternatively, this can be seen because the $k$-fold blow-up of $K_1^1$ is $K_k^1$. 
 \end{example}

\begin{example} The sequence $(kK_1^1)$ is strongly polynomial, with ${\rm hom}(G,kK_1^1)=k^{c(G)}$, where $c(G)$ is the number of connected components of $G$. (This also follows by Corollary~\ref{add_loops} and the fact that $(\overline{K_k})$ is strongly polynomial, 
or by Proposition~\ref{lexicographic}, taking the lexicographic product of $\ol{K_k}$ with $K_1^1$.)

By Proposition \ref{lcomplement}, the looped complement sequence $(\widetilde{kK_1^1})=(K_k)$ is also strongly polynomial: ${\rm hom}(G,K_k)=P(G;k)$. Here $P(G; k)$ is the chromatic polynomial of $G$ and equation~{\rm \eqref{eq:loop_compl}} gives the well known subgraph expansion of the chromatic polynomial,
$$P(G;k)=\sum_{D\subseteq E(G)}(-1)^{|E(G)|-|D|}k^{c(G-D)}.$$
Equation~{\rm \eqref{eq:compl}} yields the same subgraph expansion,
because ${\rm hom}(G/C-D,\ol{K}_k)$ is zero if $C\cup D\neq E(G)$ and $k^{|V(G/C-D)|}$ otherwise, and $|V(G/C-D)|=c(G-D)$ since contracting edges in $C=E(G)\setminus D$ shrinks each component of $G-D$ into an isolated vertex.
\end{example}

\begin{example}\label{ex:rooks}
Ornamenting any given simple graph with either of the two sequences $(K_k)$ and $(\overline{K_k})$ gives, by Proposition~\ref{composition},  another strongly polynomial sequence. 
In particular, the complete bipartite graphs $(K_{j,k})$ form a strongly polynomial sequence in $(j,k)$.
 Indeed, ${\rm hom}(G,K_{j,k})=j^{|V_0(G)|}k^{|V_1(G)|}+j^{|V_1(G)|}k^{|V_0(G)|}$ when $G$ is connected and bipartite with bipartition $V(G)=V_0(G)\cup V_1(G)$, and zero when $G$ is not bipartite.

The line graph $L(K_{j,k})$ of a complete bipartite graph is known as a {\em rook's graph} (for the $j\times k$ chessboard) and is isomorphic to the Cartesian product $K_j\square K_k$ of complete graphs.
 By Proposition~\ref{line} the sequence $(K_{j}\square K_k)$ is strongly polynomial in $(j,k)$.
(However, we do not have a proof that in general $(F_{\mathbf{j}}\square H_{\mathbf{k}})$ is strongly polynomial in $(\mathbf{j},\mathbf{k})$ when $(F_{\mathbf{j}})$ and $(H_{\mathbf{k}})$ are strongly polynomial.)
\end{example}

\begin{example}
That the sequence $(K_k^1+K_{t-k}^{\ell})$ is polynomial in $k, t$ and $\ell$ follows from the fact that $(K_k^1)$ is a polynomial sequence: by Proposition~\ref{poly_seq_preserve} the sequence $(K_{t-k}^{\ell})$ is polynomial in $t-k$ and $\ell$, and by Proposition~\ref{composition} the ornamentation of $K_2$ with these two sequences gives by composition a polynomial sequence in $k, t-k$ and $\ell$.
In fact we have $${\rm hom}(G,K_k^1+K_{t-k}^{\ell})=\xi(G;t,\ell-1,(k-t)(\ell-1)),$$ where $\xi(G;x,y,z)$ is the Averbouch--Godlin--Makowsky polynomial (see~\cite{AGM, GGN11}).

\end{example}

\begin{example}

We consider here sequences with index variables $(j,k_1,\ldots, k_{h})\in \mathbb{N}^{h+1}$. (In fact just one variable will be used for each sequence.)
By Proposition~\ref{composition}, for fixed $j$ the sequence $(K_j[\{\overline{K}_{k_i}:1\leq i\leq h\}])$ is strongly polynomial in $(k_1,\ldots, k_{h})$.
Indeed, for $j\geq h$ we have
$${\rm hom}(G,K_j[\{\overline{K}_{k_i}:1\leq i\leq h\}])=X(G;(k_1,k_2,\ldots, k_{h},1,\ldots, 1, 0, 0, \ldots)),$$
where on the right-hand side there are $j$ non-zero arguments in the function
$$X(G;(x_i:i\in\mathbb{N}))=\sum_{f:V(G)\rightarrow\mathbb{N}}\;\prod_{i\in\mathbb{N}}x_i^{|f^{-1}(\{i\})|},$$
 where the summation is over all proper colourings of $G$ by positive integers. (This is Stanley's symmetric chromatic function~\cite{S95}.)

Thus, fixing $j= h$, the strongly polynomial sequence $(K_h[\{\overline{K}_{k_i}:1\leq i\leq h\}])$ indexed by $k_1,k_2,\ldots, k_{h}$ determines Stanley's symmetric function in its first $h$ variables.

By ornamenting vertex $i$ in $K_j$ with $\overline{K}_{k_i}^{\ell_i}$ ($\ell_i$ loops on each vertex of $\overline{K}_{k_i}$) instead of $\overline{K}_{k_i}$ we obtain an evaluation of Noble and Merino's {\em strong Tutte symmetric function}~\cite{NM08}, itself equivalent to the strong $U$-polynomial (in a doubly infinite sequence of indeterminates), a simultaneous generalization of Stanley's chromatic function and Tutte's V-functions. The case where $\ell_i=\ell$ is constant gives an evaluation of the monochrome polynomial extension of Stanley's symmetric chromatic function:
$$\sum_{f:V(G)\rightarrow\mathbb{N}}\;\prod_{i\in\mathbb{N}}x_i^{|f^{-1}(\{i\})|}y^{\#\{uv\in E(G): f(u)=f(v)\}}$$
where the sum is now over all colourings, not necessarily proper.

\end{example}

For the following two examples, compare \cite[Ex. B3]{dlHJ95}.

\begin{example}\label{ex:K2...2} 
 The graph $\overline{kK_2}$ is the complete multipartite graph $K_{2,2,,\ldots, 2}$, with $k$ occurrences of $2$. 
With ${\rm hom}(G,kK_2)=k^{c(G)}P(G;2)$, the sequence $(kK_2)$ is strongly polynomial in $k$. It follows by Proposition~\ref{complement} that $(\overline{kK_2})$ is strongly polynomial in $k$ too.
(This follows alternatively from Proposition~\ref{lexicographic}, since  $\overline{kK_2}=K_k[\overline{K_2}]$ and $(K_k)$ and $(\overline{K_2})$ are strongly polynomial sequences.)
\end{example}

\begin{example}\label{ex:cubes}
The sequence of hypercubes $(Q_k)$ ($Q_k$ is the $k$-fold Cartesian product of $K_2$ with itself) is not polynomial in the single parameter $k\in\mathbb{N}$, no matter how one re-indexes it by any other single parameter. This is because ${\rm hom}(K_1, Q_k)=2^k$ and ${\rm hom}(K_2, Q_k)=k2^{k}$. If it were the case that ${\rm hom}(K_1,Q_k)=p(K_1; f(k))$ and ${\rm hom}(K_2, Q_k)=p(K_2;f(k))$ for polynomials $p(K_1;x)$ and $p(K_2;x)$ and some injective function $f:\mathbb{N}\rightarrow\mathbb{N}$ then we have $\frac{p(K_2;f(k))}{p(K_1;f(k))}=k$ while at the same time $\frac{p(K_2;f(k))}{p(K_1;f(k))}\sim cf(k)^d$ for some constant $c$ and non-zero integer $d$; this implies $f(k)\sim (k/c)^{1/d}$ but then $p(K_1;f(k))$ would be polynomially bounded in $k$, contrary to  $p(K_1;f(k))=2^k$.
\end{example}

Counting homomorphisms to a Cartesian product of graphs (such as $Q_k\cong K_2\Box Q_{k-1}$) remains an intricate problem; see for example~\cite{HT97}. Counting (and structural properties of) homomorphisms {\em from} hypercubes have been intensively studied~\cite{K01,G03}.

If instead we index the hypercubes $(Q_k)$ 
 by two parameters $k_1=k$ and $k_2=2^k$, then ${\rm hom}(K_{1,\ell},Q_k)=k^\ell2^k=k_1^\ell k_2$ is a bivariate polynomial in $k_1$ and $k_2$.
Although as we have just seen the sequence $(Q_k)$ cannot by reindexing be made polynomial in a single variable, the quantity ${\rm hom}(G, Q_k)$ is in fact polynomial in two (dependent) variables:

\begin{proposition}\label{prop:hypercubes}
Let $Q_k$ be the hypercube on $2^k$ vertices. For each graph $G$ there is a bivariate polynomial $p(G;x_1,x_2)$ such that ${\rm hom}(G,Q_k)=p(G;k,2^k)$ for all $k\geq 0$.
\end{proposition}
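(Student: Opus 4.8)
The plan is to reduce the problem to counting homomorphisms from the connected components of $G$ to $Q_k$, and then to exploit the product structure $Q_k = K_2^{\Box k}$ together with the fact that $K_2$-colourings are governed by bipartition data. First I would recall that ${\rm hom}(G,Q_k)=\prod_i {\rm hom}(G_i,Q_k)$ over connected components $G_i$, so it suffices to produce, for each connected $G$, a bivariate polynomial $p(G;x_1,x_2)$ with ${\rm hom}(G,Q_k)=p(G;k,2^k)$; a product of such polynomials is again a polynomial in $(x_1,x_2)$, and if $G$ is non-bipartite then ${\rm hom}(G,Q_k)=0$ and we take $p(G;x_1,x_2)=0$. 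So assume $G$ is connected and bipartite, with a fixed $2$-colouring $\chi:V(G)\to\{0,1\}$ (unique up to swap).

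The key step is the following description of a homomorphism $f:G\to Q_k=\{0,1\}^k$. Writing $f(v)=(f_1(v),\ldots,f_k(v))$, the map $f$ is a homomorphism if and only if for every edge $uv$ there is exactly one coordinate $t$ with $f_t(u)\neq f_t(v)$. I would argue that this forces a global structure: pick a root $r$ of $G$; then $f$ is determined by $f(r)\in\{0,1\}^k$ together with, for each vertex $v$, the coordinate index used along a path from $r$ to $v$ in a way that is path-independent. More precisely, I would show that $f$ factors as $f = \iota\circ g$ where $g:G\to Q_k$ has $g(r)=\mathbf{0}$ (shifting by $f(r)$ accounts for a factor of $2^k$), and that specifying $g$ amounts to choosing, for each vertex $v$, a vector $g(v)$ whose parity of weight equals $\chi(v)$ and such that adjacent vertices differ in exactly one coordinate. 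The cleanest formulation: a homomorphism $g:G\to Q_k$ with $g(r)=\mathbf 0$ is the same as a graph homomorphism from $G$ to $Q_k$ sending $r$ to the origin, and the number of these is a sum over ``coordinate-labelling'' functions. I would make this precise by noting that each edge $uv$ of $G$ gets assigned a coordinate $\phi(uv)\in[k]$ (the unique coordinate flipped), that around every cycle each coordinate is used an even number of times, and that conversely every such edge-labelling $\phi:E(G)\to[k]$ satisfying the even-cycle condition yields exactly one homomorphism $g$ with $g(r)=\mathbf 0$ (set $g_t(v)$ = parity of the number of $\phi$-edges labelled $t$ on any $r$–$v$ path). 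Hence
\[
{\rm hom}(G,Q_k) = 2^k \cdot \#\{\phi:E(G)\to[k] : \text{every cycle uses each label an even number of times}\}.
\]

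The final step is to see that the count $N(G;k) := \#\{\phi:E(G)\to[k] : \text{even on every cycle}\}$ is polynomial in $k$. Here I would use that ``$\phi$ is even on every cycle'' means $\phi$, viewed in the cocycle space, i.e.\ for each label $t\in[k]$ the set $\phi^{-1}(t)$ is an edge cut (element of the cut space / bond lattice) of $G$ — equivalently, the partition of $E(G)$ induced by $\phi$ refines into parts each of which is a disjoint union of edge cuts, which corresponds to a partition of $V(G)$ into blocks where $\phi(uv)$ depends only on the pair of blocks of $u$ and $v$. Concretely: such a $\phi$ is the same as a function from $V(G)$ to $\{0,1\}^k$ up to the global shift, so $N(G;k)$ counts partitions of $V(G)$ refining the bipartition together with an injective-enough assignment of coordinates; this is a sum over set partitions $\pi$ of $V(G)$ (finitely many, independent of $k$) of a quantity of the form $k(k-1)\cdots$, a falling factorial or monomial in $k$ depending on $\pi$ — manifestly a polynomial in $k$. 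Summing finitely many such terms gives $N(G;k)=q(G;k)$ for a polynomial $q(G;x_1)$, and therefore ${\rm hom}(G,Q_k)=2^k q(G;k)=p(G;k,2^k)$ with $p(G;x_1,x_2)=x_2\, q(G;x_1)$.

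The main obstacle is the middle step: correctly proving the bijection between homomorphisms $g:G\to Q_k$ fixing the root and edge-labellings $\phi$ that are ``even on every cycle'', and then recasting the count of such $\phi$ as a finite sum, indexed by vertex partitions of $G$ (hence bounded independently of $k$), of explicit polynomials in $k$. Once that combinatorial identification is set up cleanly, polynomiality in $(k,2^k)$ and the reduction to connected bipartite $G$ are routine.
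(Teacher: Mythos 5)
Your route is genuinely different from the paper's, and its core idea is sound, but the last counting step as written has a real slip that you should fix. The paper proceeds via the expansion $\mathrm{hom}(G,Q_k)=\sum_S \mathrm{sur}(G,S)\,q(S;k,2^k)$ over isomorphism types of connected homomorphic images $S$, and then shows each $q(S;k,2^k)$ (the number of copies of $S$ as a subgraph of $Q_k$) is polynomial in $(k,2^k)$ by analysing orbits under $\mathrm{Aut}(Q_k)\cong \mathbb{Z}_2^k\rtimes\mathrm{Sym}(k)$: any copy of $S$ can be moved into a fixed face $Q_{\mathrm{diam}(S)}$, the orbit size divides $2^k k!/(k-m)!$ and is a multiple of $2^{k-m}k!/((k-m)!\,m!)$, and a delicate final step checks the resulting polynomial identity still holds when $k<\mathrm{diam}(G)$. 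You instead exploit the group structure of $\mathbb{F}_2^k$ directly: for connected $G$ you factor out a translation to get $\mathrm{hom}(G,Q_k)=2^k\cdot N(G;k)$, where $N(G;k)$ is the number of homomorphisms fixing a root, and identify these with edge-labellings $\phi:E(G)\to[k]$ whose every colour class lies in the cut space of $G$. This is cleaner in two respects: it avoids the boundary case $k<\mathrm{diam}(G)$ entirely, and it gives the sharper statement that $\mathrm{hom}(G,Q_k)$ equals $(2^k)^{c(G)}$ times a polynomial in $k$ alone, rather than merely a polynomial in $(k,2^k)$. The paper's method, on the other hand, generalizes directly to other vertex-transitive families (as it does for the generalized Johnson graphs in~\cite{dlHJ95}), where no abelian translation subgroup is available.

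One step needs repair. When you argue that $N(G;k)$ is polynomial in $k$, you slide from the partition of $E(G)$ induced by $\phi$ to ``a sum over set partitions $\pi$ of $V(G)$ \dots of a quantity of the form $k(k-1)\cdots$, a falling factorial or monomial.'' For \emph{vertex} partitions this is false: given the kernel $\pi$ of the root-fixing homomorphism $g$, the number of $g$ with that kernel is the number of rooted embeddings of $G/\pi$ into $Q_k$, which is in general a nontrivial polynomial in $k$ (for $G=P_4$ rooted at an endpoint one gets $k(k-1)^2$, a sum of two falling factorials), so the claim does not hold summand by summand and the argument as stated is circular. The fix is to index by the \emph{edge} partition $\sigma=\ker\phi$: the condition ``each fibre lies in the cut space'' depends only on $\sigma$, and the number of $\phi$ with $\ker\phi=\sigma$ is exactly the number of injections from the parts of $\sigma$ into $[k]$, namely the falling factorial $k(k-1)\cdots(k-|\sigma|+1)$. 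Hence
$N(G;k)=\sum_{\sigma} k^{\underline{|\sigma|}}$,
the sum being over partitions $\sigma$ of $E(G)$ every one of whose parts lies in the cut space of $G$, a set that is finite and independent of $k$. With this correction your proof is complete.
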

\begin{proof}
Let $V(Q_k)=\mathbb{Z}_2^k$ be the set of binary vectors of length $k$. Vertices $u$ and $v$ are joined by an edge if and only if they differ in exactly one position, in other words $u+v$ has Hamming weight $1$.

The automorphism group ${\rm Aut}(Q_k)$ of $Q_k$ has size $2^kk!$, consisting of coordinate permutations and addition of a fixed vector $w\in\mathbb{Z}_2^k$ to all vertices.

To prove ${\rm hom}(G,Q_k)$ is polynomial in $k$ and $2^k$ it suffices to show that the number of subgraphs of $Q_k$ of a given isomorphism type $S$ with $|V(S)|\leq |V(G)|$ is, for sufficiently large $k$, a polynomial in $k$ and $2^k$. This is because
$${\rm hom}(G,Q_k)=\sum_{\stackrel{S\subseteq Q_k}{|V(S)|\leq |V(G)|}}{\rm sur}(G, S),$$
where ${\rm sur}(G,S)$ denotes the number of vertex- and edge-surjective homomorphisms from $G$ onto the subgraph $S$ of $Q_k$, in which $S$ has at most $|V(G)|$ vertices (independent of $k$). Moreover, we may assume here that $S$ is connected (since it is enough to prove ${\rm hom}(G, Q_k)$ is polynomial in $k$ and $2^k$ for connected~$G$).

Next we prove that any given copy of $S$ in $Q_k$ as a subgraph can be moved by an automorphism of $Q_k$ to a copy $S'$ of $S$ in $Q_m$, where $m=m(S)$ depends on $S$ but not $k$, and $Q_m\subseteq Q_k$ has vertex set $\mathbb{Z}_2^m\times \{0\}^{k-m}$.

To show this, first apply an automorphism of $Q_k$ that sends one vertex of $S$ to the all-zero vector, by adding $w$ to every vertex for some $w\in V(S)$. Denote this subgraph by $S+w$. Since $S$ is connected and $uv$ are adjacent in $S$ only if $u+v$ has Hamming weight $1$, each vertex of $S+w$ is supported on a subset of a fixed set of $m={\rm diam}(S)$ coordinates. Now apply an automorphism of $Q_k$ that permutes coordinates so that the $m$ coordinates non-zero for some vertex in $S+w$ are the initial $m$ coordinates; the remaining $k-m$ coordinates are zero. This produces the desired copy $S'$ of $S$ contained within the induced subcube $Q_m$.

It now follows that the number of distinct orbits of a copy of $S$ under ${\rm Aut}(Q_k)$ is independent of $k$. Further, each orbit of $S$ (equal to an orbit of $S'$ for subgraph $S'$ of the induced $Q_m$ on vertex set $\mathbb{Z}_2^m\times \{0\}^{k-m}$) has size a divisor of $\frac{2^kk!}{(k-m)!}$ (a polynomial in $k$ and $2^k$), since the stabilizer of $S'$ in $Q_m$ under the action of ${\rm Aut}(Q_k)$ contains as a subgroup all permutations of the last $k-m$ coordinates of vertices of $Q_k$.
We have therefore shown that, for connected $G$ and $k\geq {\rm diam}(G)$,
$${\rm hom}(G,Q_k) =\sum_{S}{\rm sur}(G,S)q(S;k,2^k),$$
where the summation is over all isomorphism types $S$ of graphs which are homomorphic images of $G$ and $q(S;k,2^k)$ is polynomial in $k$ and $2^k$ (for $k\geq {\rm diam}(G)$ equal to the number of copies of $S$ as a subgraph of $Q_k$). In other words, for $k\geq {\rm diam}(G)$,
 $${\rm hom}(G,Q_k) = p(G;k,2^k),$$
 for some bivariate polynomial $p(G)$.

On the other hand, since for each $1\leq i\leq m$ there is some vertex of the subgraph $S'$ whose $i$th coordinate is not $0$, only permutations of the first $m$ coordinates among themselves can belong to the stabilizer of $S'$. Also, none of the last $k-m$ coordinates can be flipped in value in an automorphism stabilizing $S'$. Hence the size of the stabilizer of $S'$ under the action of ${\rm Aut}(Q_k)$ is a divisor of $2^m(k-m)!m!$, and the size of the orbit of $S'$ is therefore a multiple of $\frac{2^{k-m}k!}{(k-m)!m!}$. 

A polynomial in $k$ that is both a multiple of $\frac{2^{k-m}k!}{(k-m)!m!}$ and a divisor of $\frac{2^kk!}{(k-m)!}$ vanishes when evaluated at $k<m$.
It follows that $q(S;k,2^k)=0$ whenever $k<{\rm diam}(S)$. Hence we also have, for $k<{\rm diam}(G)$,
\begin{align*}{\rm hom}(G,Q_k) & =\sum_{\stackrel{S:}{|V(S)|\leq 2^k, {\rm diam}(S)\leq k}} {\rm sur}(G,S)q(S;k,2^k)\\
& =\sum_{S} {\rm sur}(G,S)q(S;k,2^k)=p(G;k,2^k),\end{align*}
where the summations are over isomorphism types of graphs $S$ which are images of $G$, restricted in the first sum to those images $S$ which are sufficiently small to appear in $Q_k$ with $k<{\rm diam}(G)$, but unrestricted in the second sum.

We conclude that $p(G;k,2^k)$ is the number of homomorphisms from $G$ to $Q_k$ for all values $k\geq 0$.
%
\end{proof}

Proposition~\ref{prop:hypercubes} does not tell us whether the sequence of hypercubes $Q_k$ indexed by the two variables $k_1=k$ and $k_2=2^k$ is a strongly polynomial subsequence in our sense: is there a strongly polynomial sequence $(H_{k_1,k_2})$ indexed by $(k_1,k_2)\in\mathbb{N}\times\mathbb{N}$ such that $H_{k_1,k_2}=Q_k$ when $(k_1,k_2)=(k,2^k)$?

\begin{remark} The automorphism group of the hypercube $Q_k$ is isomorphic to that of the complete multipartite graph $\overline{kK_2}$, both equal to the wreath product of $\mathrm{Sym}(k)$ with $\mathrm{Sym}(2)$. Sabidussi~\cite{S59} showed in general that $G_1\times G_2$ has automorphism group isomorphic to that of the disjoint union $G_1\cup G_2$. Thus the $k$-fold Cartesian product $H^{\times k}$ of $H$ with itself has automorphism group isomorphic to that of $kH$, and the latter coincides with the automorphism group of $\overline{kH}$. All three automorphism groups are isomorphic to $\mathrm{Sym}(k)\wr \mathrm{Aut}(H)$, of order $|{\rm Aut}(H)|^kk!$. We have seen that the sequence $(\overline{kH})$ is polynomial in $k$, whereas $(Q_k)$ is only polynomial in $k$ and $2^k$. The difference lies in the nature of the actions of the automorphism group on the vertex set of the graph: for the complete multipartite graphs only the subgroup ${\rm Sym}(k)$ of ${\rm Aut}(\overline{kK_2})$ is required in order to move a subgraph $S$ of $\overline{kK_2}$ to an isomorphic copy in $\overline{mK_2}$, where $m=m(S)$ is independent of $k$.
\end{remark}

\section{Coloured rooted trees and branching}\label{sec:treesbranching}


We turn now to the main contribution of this paper, which is to introduce a new method to generate strongly polynomial sequences of graphs $(H_{\mathbf{k}})$ determining a multivariate graph polynomial $p(G;\mathbf{k})$.
We have seen that the chromatic polynomial, Tutte polynomial and Averbouch--Godlin--Makowsky polynomial can be obtained from strongly polynomial sequence of graphs built from the constant sequence $(K_1)$ alone, by applying operations of blow-ups, loop additions and joins. The Tittmann--Averbouch--Makowsky polynomial cannot be obtained using only the operations described hitherto, but can be obtained using our new construction.

\subsection{Coloured rooted trees}\label{sec:coloured_trees}

Let $T$ be a rooted tree with root $r$. For $s\in V(T)-r$, let $P(s)$ denote the unique path from root $r$ to vertex $s$. A vertex $t\neq s$ is an {\em ancestor} of $s$ if $t\in P(s)$, and a {\em descendant} of $s$ if $s\in P(t)$.  The {\em parent} of $s$ is the vertex adjacent to $s$ in $P(s)$ and is denoted by $p(s)$. The relation $s\in P(t)$ defines a partial order on $V(T)$ with minimum element $r$ (which is an ancestor of all the other vertices) and maximal elements the leaves of $T$.
The {\em level} of $s\in V(T)$ is the number of its ancestors, i.e., $|P(s)|-1$. The root of $T$ is the unique vertex at level $0$. The {\em height} of a rooted tree $T$ is defined as the maximum level, i.e., ${\rm height}(T)=\max\{|P(s)|-1:s\in V(T)\}$.
The set $$B(s)=\{t\in V(T): s\in P(t)\}$$
corresponds to the subtree of $T$ rooted at $s$. (Here the {\em subtree rooted at $s$} is a maximal subtree of $T$ in the sense that it contains all descendants of its root $s$.)

The {\em closure} of $T$ is the graph ${\rm clos}(T)$ on vertex set $V(T)$ where $st$ is an edge if $s\in P(t)$ or $t\in P(s)$, and $s\neq t$. Alternatively, we can consider the rooted tree $T$ as the Hasse diagram of the poset defined by its ancestor relation; the comparability graph of this poset is then  ${\rm clos}(T)$.

\begin{example}
Let $T=P_k$ be the path on $k$ vertices with root one of its endpoints. Then ${\rm clos}(P_k)=K_k$.
The graph parameter ${\rm hom}(G,P_k)$ is a polynomial in $k$ 
 for $k\geq 1+{\rm diam}(G)$, i.e., $(P_k)$ is a polynomial sequence. (See~\cite[Ex. B4]{dlHJ95}.) Since ${\rm clos}(P_k)=K_k$, the parameter ${\rm hom}(G,{\rm clos}(T))$ is equal to the chromatic polynomial of $G$ evaluated at $k$ (see Example~\ref{ex:hom}).
\end{example}

A simple graph $H$ has {\em tree-depth} $d$, denoted by ${\rm td}(H)=d$, if $H$ is a subgraph of ${\rm clos}(T)$ for some rooted tree $T$ of height $d-1$, and it is not a subgraph of the closure of a rooted tree of smaller height. For example the path $P_k$ has tree-depth $\lceil\log_2(k+1)\rceil$ and the complete graph $K_k$ of course has tree-depth $k$. See~\cite{NOdM06}.

We now describe a three-stage colouring of the vertices of rooted tree $T$. The first two stages encode an ornamented graph. The third stage is the subject of Section~\ref{sec:branching} below.
To each such coloured rooted tree we shall see that there corresponds a strongly polynomial sequence of graphs.

Let us start with a given simple graph $H$ together with a rooted tree $T$ of whose closure $H$ is a subgraph. 
(For given $H$ there are many choices for $T$, such as a depth-first search tree for $H$, or a tree $T$ of minimal height ${\rm td}(H)-1$ whose closure contains $H$ as a subgraph.)
We assume that $V(H)=V(T)$, i.e., $H$ is a spanning subgraph of ${\rm clos}(T)$.

A subgraph $H$ of ${\rm clos}(T)$, where $T$ has height $d\geq {\rm td}(H)-1$, will be encoded by assigning to the vertices of $T$ subsets of $\{0,1,2,\ldots, d-1\}$. These sets are interpreted as colours. Thus there are $2^d$ colours. The colour of $s$ indicates which of the vertices on the chain $P(s)$ the vertex $s$ is adjacent to in $H$. Specifically, a non-root vertex $s\in V(T)$
 is assigned the set $A_s\subseteq\{0,1,\ldots, |P(s)|\!-\!2\}$ when in the subgraph $H$ the vertex $s$ is joined to its ancestors (other vertices in the chain $P(s)$) precisely at levels $i\in A_s$. (The root, the only vertex at level $0$, is always assigned the empty set.) In other words, $$A_s=\{|P(s)|\!-\!1\!-\! d(s,t): t\in P(s), st\in E(H)\},$$ where $d(s,t)$ is the distance between $s$ and $t$ in $T$. 
For example, $s$ receives the empty set if it is joined to none of its ancestors, and the set $\{0,1\ldots, |P(s)|\!-\!2\}$ when it is joined to all its ancestors. See Figure~\ref{fig:branching} below for a small example.

We can now encode in our colouring of $T$ any ornamentation of $H$.
Suppose then that each vertex $s\in V(H)$ is ornamented with a graph $F_s$. Since $H$ is a spanning subgraph of ${\rm clos}(T),$ this is the same as saying that each vertex $s\in V(T)$ is ornamented with a graph $F_s$, in addition to the subset $A_s$ assigned to vertex $s$ encoding the adjacencies of $s$ in the subgraph $H$.
Thus a coloured rooted tree $T$, in which vertex $s$ has colour $(A_s,F_s)$, corresponds to a spanning subgraph $H$ of ${\rm clos}(T)$ ornamented by graphs $\{F_s:s\in V(H)\}$.

\subsection{Branching}\label{sec:branching}

In this section we define an operation on coloured rooted trees, which we shall call ``branching'', and which translates to an operation on ornamented simple graphs once the tree colours have been interpreted appropriately. 

In a coloured rooted tree $T$, the colour of a vertex $s\in V(T)$ for us at the moment consists of two components, described in Section~\ref{sec:coloured_trees}: the subset $A_s$ of nonnegative integers that encodes the adjacencies of $s$ (to vertices in $P(s)$) in spanning subgraph  $H$  of ${\rm clos}(T)$ , and an ornament $F_s$ of $H$ placed on vertex $s$ (graph to be used in a composition).

In order to define branching it will be helpful to augment these colours by a third component to form an auxiliary coloured rooted tree.
Take then $T$ to be our coloured rooted tree, vertex $s\in V(T)$ having colour $(A_s, F_s)$ for $A_s\subseteq \{0,1,\ldots, |P(s)|\!-\!2\}$ and graph $F_s$, and let $\mathbf{k}=(k_s:s\in V(T))$ be a tuple of positive integers. We define $T^{(\mathbf{k})}$ to be the coloured rooted tree $T$ with vertex $s$ now having augmented colour $(A_s,F_s,k_s)$, for each $s\in V(T)$.

If $k_s=1$ for each $s\in V(T)$ then we have effectively an ornamented graph $H$: vertex $s\in V(T)$ of colour $(A_s, F_s,1)$ corresponds to vertex $s\in V(H)$ of spanning subgraph $H$ with ornament $F_s$.

When $k_s>1$ for some $s\in V(T)$ we use the branching operation of Definition~\ref{def:branching} below, to replace $T^{(\mathbf{k})}$ by another coloured rooted tree. This tree has the property that vertex $s$ of colour $(A_s, F_s, k_s)$ is replaced by $k_s$ copies of $s$ of colour $(A_s,F_s,1)$. By repeatedly branching on vertices in this way eventually we reach a coloured rooted tree in which all colours take the form $(A_s, F_s,1)$. By erasing the final colour component (now constantly $1$) in this final coloured tree, we have a coloured tree having colours of the form $(A_s,F_s)$ and 
which we shall denote by $T^{\mathbf{k}}$ (see Definition~\ref{def:augment} below).

Let us formalize this procedure more precisely. We start in Definition~\ref{def:branching} below with branching at a single given vertex $s$ of the coloured rooted tree~$T^{(\mathbf{k})}$. See also Figure~\ref{fig:branch}. We then establish that the order of vertices at which we branch does not matter: we always reach the same coloured rooted tree at the end (when all branching multiplicities are equal to $1$). 

\begin{definition}\label{def:branching}
Let $T$ be a coloured rooted tree and  $\mathbf{k}=(k_s:s\in V(T))$ a tuple of positive integers indexed by the vertices of $T$. The {\em branching of $T^{(\mathbf{k})}$ at a vertex $s\in V(T^{(\mathbf{k})})$} having colour $(A_s,F_s,k_s)$ is the coloured rooted tree that
\begin{itemize}
\item[(i)] coincides with $T^{(\mathbf{k})}$ on vertices $\{t:s\not \in P(t)\}=V(T)\setminus B(s)$, and
\item[(ii)] is such that $p(s)$ now has $k_s$ isomorphic copies of $B(s)$ pendant from it instead of one, where isomorphisms here are colour-preserving, with the exception of the $k_s$ copies of $s$, which are now each coloured $(A_s,F_s,1)$ instead of the original colour $(A_s,F_s,k_s)$.
\end{itemize}
The positive integer $k_s$ is the \emph{branching multiplicity} of vertex $s$.
\end{definition}

Having performed branching at a given vertex, we obtain another coloured rooted tree. We now proceed to branch at any given vertex of this new tree, and iterate until it becomes the case that all branching multiplicities, recorded in the third component of vertex colours, are equal to $1$. (Of course, $1$-fold branching does not effect any change.)

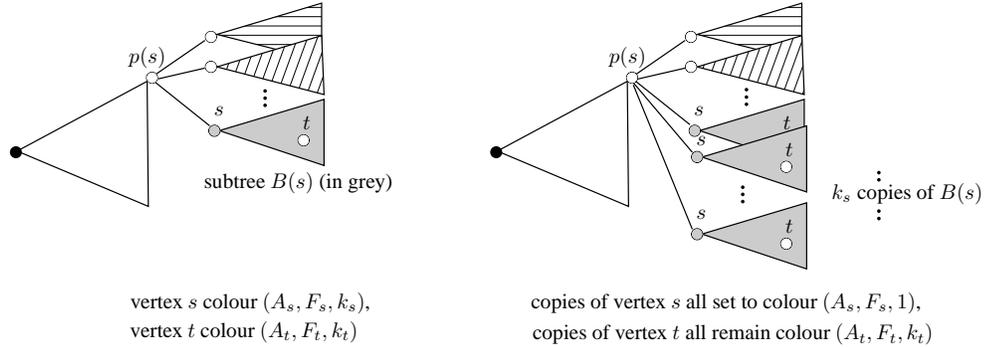
\begin{figure}[htp]

\begin{center}
\scalebox{0.8} 
{
\begin{pspicture}(0,-2.8889062)(16.621876,2.8589063)
\definecolor{color2259c}{rgb}{0.8,0.8,0.8}
\psline[linewidth=0.02cm](10.36,1.4889063)(11.34,-0.9110938)
\psline[linewidth=0.02cm](10.34,1.5489062)(11.34,0.36890626)
\psline[linewidth=0.02cm](2.4,1.6489062)(3.24,2.2289062)
\psline[linewidth=0.02cm](2.46,1.5889063)(3.24,1.7689062)
\psdots[dotsize=0.2](0.1,0.36890626)
\pspolygon[linewidth=0.02](0.12,0.38890624)(2.28,1.5489062)(2.3,-0.5310938)
\psdots[dotsize=0.2,fillstyle=solid,dotstyle=o](2.36,1.6089063)
\psdots[dotsize=0.2,fillstyle=solid,dotstyle=o](3.34,2.2889063)
\psline[linewidth=0.02cm](2.42,1.5289062)(3.32,0.80890626)
\pspolygon[linewidth=0.02,fillstyle=hlines,hatchwidth=0.0139999995,hatchangle=0.0](3.44,2.3289063)(5.18,2.8489063)(5.18,1.8889062)
\pspolygon[linewidth=0.02,fillstyle=hlines*,hatchwidth=0.0139999995,hatchangle=70.0](3.44,1.7889062)(5.18,2.3089063)(5.22,1.3289063)
\pspolygon[linewidth=0.02,fillstyle=hlines*,hatchwidth=0.0139999995,hatchangle=70.0,hatchcolor=color2259c,fillcolor=color2259c](3.48,0.7289063)(5.22,1.2489063)(5.22,0.14890625)
\psdots[dotsize=0.2,fillstyle=solid,fillcolor=color2259c,dotstyle=o](3.4,0.7289063)
\usefont{T1}{ptm}{m}{n}
\rput(3.994375,-2.1410937){vertex $s$ colour $(A_s, F_s, k_s)$,}
\psdots[dotsize=0.2,fillstyle=solid,dotstyle=o](3.34,1.7889062)
\psdots[dotsize=0.2,fillstyle=solid,dotstyle=o](4.88,0.56890625)
\usefont{T1}{ptm}{m}{n}
\rput(4.931406,0.85890627){$t$}
\usefont{T1}{ptm}{m}{n}
\rput(3.4714062,1.0589062){$s$}
\usefont{T1}{ptm}{m}{n}
\rput(2.2814062,1.9389062){$p(s)$}
\usefont{T1}{ptm}{m}{n}
\rput(3.889375,-2.6210938){vertex $t$ colour $(A_t, F_t, k_t)$}
\psline[linewidth=0.02cm](10.38,1.6489062)(11.22,2.2289062)
\psline[linewidth=0.02cm](10.44,1.5889063)(11.22,1.7689062)
\psdots[dotsize=0.2](8.08,0.36890626)
\pspolygon[linewidth=0.02](8.1,0.38890624)(10.26,1.5489062)(10.28,-0.5310938)
\psdots[dotsize=0.2,fillstyle=solid,dotstyle=o](10.34,1.6089063)
\psdots[dotsize=0.2,fillstyle=solid,dotstyle=o](11.32,2.2889063)
\psline[linewidth=0.02cm](10.4,1.5489062)(11.3,0.82890624)
\pspolygon[linewidth=0.02,fillstyle=hlines,hatchwidth=0.0139999995,hatchangle=0.0](11.42,2.3289063)(13.16,2.8489063)(13.16,1.8889062)
\pspolygon[linewidth=0.02,fillstyle=hlines*,hatchwidth=0.0139999995,hatchangle=70.0](11.42,1.7889062)(13.16,2.3089063)(13.2,1.3289063)
\pspolygon[linewidth=0.02,fillstyle=hlines*,hatchwidth=0.0139999995,hatchangle=70.0,hatchcolor=color2259c,fillcolor=color2259c](11.46,0.7289063)(13.2,1.2489063)(13.2,0.14890625)
\psdots[dotsize=0.2,fillstyle=solid,fillcolor=color2259c,dotstyle=o](11.38,0.7289063)
\psdots[dotsize=0.2,fillstyle=solid,dotstyle=o](11.32,1.7889062)
\psdots[dotsize=0.2,fillstyle=solid,dotstyle=o](12.86,0.56890625)
\usefont{T1}{ptm}{m}{n}
\rput(12.9114065,0.85890627){$t$}
\usefont{T1}{ptm}{m}{n}
\rput(11.4514065,1.0589062){$s$}
\usefont{T1}{ptm}{m}{n}
\rput(10.261406,1.9389062){$p(s)$}
\pspolygon[linewidth=0.02,fillstyle=hlines*,hatchwidth=0.0139999995,hatchangle=70.0,hatchcolor=color2259c,fillcolor=color2259c](11.5,0.28890625)(13.24,0.80890626)(13.24,-0.29109374)
\psdots[dotsize=0.2,fillstyle=solid,fillcolor=color2259c,dotstyle=o](11.42,0.28890625)
\usefont{T1}{ptm}{m}{n}
\rput(14.9114065,-0.32109374){$k_s$ copies of $B(s)$}
\psdots[dotsize=0.2,fillstyle=solid,dotstyle=o](12.9,0.12890625)
\usefont{T1}{ptm}{m}{n}
\rput(12.9514065,0.41890624){$t$}
\usefont{T1}{ptm}{m}{n}
\rput(11.491406,0.55890626){$s$}
\pspolygon[linewidth=0.02,fillstyle=hlines*,hatchwidth=0.0139999995,hatchangle=70.0,hatchcolor=color2259c,fillcolor=color2259c](11.5,-0.99109375)(13.24,-0.47109374)(13.24,-1.5710938)
\psdots[dotsize=0.2,fillstyle=solid,fillcolor=color2259c,dotstyle=o](11.42,-0.99109375)
\psdots[dotsize=0.2,fillstyle=solid,dotstyle=o](12.9,-1.1510937)
\usefont{T1}{ptm}{m}{n}
\rput(12.9514065,-0.86109376){$t$}
\usefont{T1}{ptm}{m}{n}
\rput(11.491406,-0.6610938){$s$}
\usefont{T1}{ptm}{m}{n}
\rput(4.789375,-0.14109375){subtree $B(s)$ (in grey)}
\usefont{T1}{ptm}{m}{n}
\rput(11.899844,-2.1410937){copies of vertex $s$ all set to colour $(A_s, F_s, 1)$,}
\psdots[dotsize=0.06](12.18,-0.21109375)
\psdots[dotsize=0.06](12.18,-0.33109376)
\psdots[dotsize=0.06](12.18,-0.45109376)
\psdots[dotsize=0.06](12.24,1.4089062)
\psdots[dotsize=0.06](12.24,1.2889062)
\psdots[dotsize=0.06](12.24,1.1689062)
\psdots[dotsize=0.06](4.24,1.3889062)
\psdots[dotsize=0.06](4.24,1.2689062)
\psdots[dotsize=0.06](4.24,1.1489062)
\usefont{T1}{ptm}{m}{n}
\rput(12.014844,-2.6610937){copies of vertex $t$ all remain colour $(A_t, F_t, k_t)$}
\psdots[dotsize=0.06](14.44,0.02890625)
\psdots[dotsize=0.06](14.44,-0.09109375)
\psdots[dotsize=0.06](14.44,-0.61109376)
\psdots[dotsize=0.06](14.44,-0.73109376)
\end{pspicture}
}
\caption{\small To branch with multiplicity $k_s$ at vertex $s$,  replace $s$ by $k_s$ copies, from each of which descends an isomorphic copy of its descendant subtree $B(s)$ (colours preserved except for the copies of $s$, which have their branching multiplicity now set from $k_s$ to $1$).}\label{fig:branch}
\end{center}
\end{figure}

If $s,t$ do not belong to a common subtree of $T^{(\mathbf{k})}$ then clearly it makes no difference to the resulting coloured rooted tree if we first branch at $s$ and then at $t$ or branch in the reverse order.
When $s\in P(t)$ or $t\in P(s)$ we require the following:
\begin{lem}\label{lem:branching_order}
Let $T$ be a coloured rooted tree, $\mathbf{k}=(k_s:s\in V(T))$ and $s,t\in V(T)$ distinct vertices of $T^{(\mathbf{k})}$ with colours $(A_s,F_s,k_s)$ and $(A_t,F_t,k_t)$ respectively, and $s\in P(t)$.
Then, the coloured rooted tree obtained from $T^{(\mathbf{k})}$ by first branching at $s$ and then branching at each of the $k_s$ copies of $t$ is isomorphic to the coloured rooted tree obtained by performing the branchings in the reverse order.
\end{lem}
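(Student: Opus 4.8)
The plan is to prove the lemma by carefully tracking what happens to the subtree $B(s)$ and, within it, to the subtree $B(t)$, under the two orders of branching. Since $s\in P(t)$ we have $t\in B(s)$, so $B(t)\subseteq B(s)$. Write $B(s)$ as the ``stem'' $B(s)\setminus B(t)$ together with one pendant copy of $B(t)$ hanging off the parent $p(t)$ of $t$ (note $p(t)\in B(s)$ since $s\in P(t)$ and $s\neq t$, with the understanding that if $p(t)=s$ the stem is just the single vertex $s$). All colours in the stem, and all colours in $B(t)$, are the same in both computations; what we must check is only the \emph{multiplicities} of copies and the \emph{wiring} of the resulting forest of subtrees back onto $p(s)$.

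First I would compute the ``branch $s$ then branch $t$'' side. Branching at $s$ replaces the single pendant $B(s)$ at $p(s)$ by $k_s$ colour-preserving copies $B(s)^{(1)},\dots,B(s)^{(k_s)}$, each with its copy of $s$ recoloured to multiplicity $1$ but every other vertex—in particular every copy of $t$—retaining its colour, so each $B(s)^{(i)}$ still contains a copy $t^{(i)}$ of $t$ with colour $(A_t,F_t,k_t)$. Now branching at each $t^{(i)}$ replaces, inside $B(s)^{(i)}$, the single pendant copy of $B(t)$ at (the copy of) $p(t)$ by $k_t$ copies of $B(t)$, each with its root recoloured to multiplicity $1$. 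The net effect: $p(s)$ carries $k_s$ copies of the stem, and each such stem carries, pendant at its copy of $p(t)$, exactly $k_t$ copies of $B(t)$ (roots at multiplicity $1$). That is a total of $k_s$ copies of the stem and $k_s k_t$ copies of $B(t)$, grouped as $k_t$ per stem.

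Next I would compute the reverse order, ``branch $t$ then branch $s$''. Branching at $t$ first replaces, inside the single $B(s)$, the pendant $B(t)$ at $p(t)$ by $k_t$ copies of $B(t)$ with their roots at multiplicity $1$; the vertex $s$ still has colour $(A_s,F_s,k_s)$, and the modified $B(s)$ is now a stem carrying $k_t$ pendant copies of $B(t)$ at $p(t)$. Branching at $s$ then makes $k_s$ colour-preserving copies of this whole modified $B(s)$, each with its copy of $s$ set to multiplicity $1$ and each already carrying its own $k_t$ copies of $B(t)$. Again we get $k_s$ copies of the stem pendant at $p(s)$, each carrying $k_t$ copies of $B(t)$ at its copy of $p(t)$, with all copies of $s$ at multiplicity $1$ and all copies of the root of $B(t)$ at multiplicity $1$. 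To finish I would exhibit the obvious bijection between the two resulting coloured rooted trees: identify $V(T)\setminus B(s)$ with itself, and match the $i$th stem-copy with the $i$th stem-copy and the $j$th $B(t)$-copy pendant from it with the $j$th such copy on the other side (using any fixed isomorphisms $B(s)\cong B(s)$, $B(t)\cong B(t)$); one checks this is adjacency- and colour-preserving and respects the root, hence is an isomorphism of coloured rooted trees.

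I expect the only real subtlety—the ``main obstacle,'' such as it is—to be the bookkeeping of \emph{which} copies of $t$ get branched on and with what multiplicity: after branching at $s$ one must branch at \emph{all} $k_s$ copies of $t$ (as the statement of the lemma already specifies), and one must verify that in the reverse order the single branching at $t$, followed by duplication under branching at $s$, produces exactly the same multiset of $B(t)$-copies attached to exactly the same places. This is a matter of being precise about the recursive structure ``$B(s)$ = stem + pendant $B(t)$ at $p(t)\in B(s)$,'' after which the two computations visibly coincide. No delicate estimates or case analysis are needed; the argument is structural and the brief induction on, say, the number of vertices of $T$ lying strictly between $s$ and $t$ (or simply a direct comparison as above) suffices. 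Once this base case of commuting two branchings is in hand, a routine induction (not needed for this lemma but used subsequently) shows the final coloured tree $T^{\mathbf k}$ is independent of the order in which all branchings are performed.
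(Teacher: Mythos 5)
Your argument is correct and mirrors the paper's proof: both compute the effect of each ordering by decomposing $B(s)$ into a stem plus the pendant $B(t)$ at $p(t)$, observe that either way one obtains $k_s$ copies of the stem pendant from $p(s)$, each carrying $k_t$ copies of $B(t)$ pendant from its copy of $p(t)$ (all with branching multiplicities reset to $1$ at $s$ and $t$), and note that the rest of the tree on $V(T)\setminus B(s)$ is untouched. (One small imprecision: when $p(t)=s$ the stem $B(s)\setminus B(t)$ need not be the single vertex $s$ if $s$ has other children, but this does not affect the argument.)
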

\begin{proof}
Suppose we first branch at $s$, producing $k_s$ isomorphic copies of subtree $B(s)$ pendant from $p(s)$, with the $k_s$ copies of $s$ recoloured $(A_s,F_s,1)$. In particular, this produces $k_s$ copies of $B(t)$, each rooted by a copy of vertex $t$ with colour $(A_t,F_t,k_t)$, and each pendant from its own copy of $p(t)$. Carrying out branching at each copy of $t$ then produces $k_t$ isomorphic copies of $B(t)$ pendant from each corresponding copy of $p(t)$, with the $k_sk_t$ copies of vertex $t$ now with colour $(A_t,F_t,1)$.

Performing the branching in the reverse order first produces $k_t$ copies of $B(t)$ each rooted by a copy of $t$, all of which are pendant from $p(t)$ and coloured $(A_t,F_t,1)$, and then branching on $s$ produces $k_s$ copies of the augmented subtree $B(s)$ (extra copies of $B(t)$ pendant from $p(t)$), with the $k_s$ copies of $s$ now coloured $(A_s,F_s,1)$.

Either way, there are $k_sk_t$ copies of $B(t)$, with $k_t$ copies of $B(t)$ pendant from $p(t)$ in each of $k_s$ copies of $B(s)$ pendant from $p(s)$.  The remainder of the tree, induced on $V(T)\setminus B(s)$, remains unchanged.
\end{proof}

With Lemma~\ref{lem:branching_order} in hand, we can now unambiguously make the following definition:

\begin{definition}\label{def:augment}
Let $T$ be a coloured rooted tree and $\mathbf{k}=(k_s:s\in V(T))$ a tuple of positive integers indexed by the vertices of $T$. The {\em $\mathbf{k}$-branching} of $T$ is the coloured rooted tree $T^{\mathbf{k}}$ obtained from the augmented coloured rooted tree $T^{(\mathbf{k})}$ by successively branching at all vertices whose branching multiplicity exceeds $1$ (taking vertices in an arbitrary order).
\end{definition}

\begin{figure}[htp]
\begin{center}
\scalebox{0.8} 
{
\begin{pspicture}(0,-3.4784374)(12.23,3.4484375)
\psbezier[linewidth=0.02](10.346533,0.10490524)(10.56995,0.12204971)(10.706663,-0.31035173)(10.8,-0.8215625)(10.893337,-1.3327733)(10.98,-1.8615625)(10.78,-1.9415625)
\psdots[dotsize=0.24,fillstyle=solid,dotstyle=o](3.0,3.2984376)
\psdots[dotsize=0.24,fillstyle=solid,dotstyle=o](3.0,2.2984376)
\psdots[dotsize=0.24](3.0,1.2984375)
\psline[linewidth=0.02cm](2.98,3.1984375)(3.0,2.4184375)
\psline[linewidth=0.02cm](2.98,2.1784375)(3.0,1.4184375)
\psdots[dotsize=0.24](10.78,1.2984375)
\psline[linewidth=0.02cm](11.7,3.2184374)(11.3,2.3584375)
\psline[linewidth=0.02cm](10.36,2.1784375)(10.74,1.4184375)
\psline[linewidth=0.02cm](11.2,2.1784375)(10.8,1.3184375)
\psdots[dotsize=0.24,fillstyle=solid,dotstyle=o](11.78,3.3184376)
\psline[linewidth=0.02cm](11.1,3.2584374)(11.24,2.3784375)
\psdots[dotsize=0.24,fillstyle=solid,dotstyle=o](11.08,3.3184376)
\psdots[dotsize=0.24,fillstyle=solid,dotstyle=o](11.26,2.2784376)
\psline[linewidth=0.02cm](10.5,3.2784376)(10.3,2.3784375)
\psline[linewidth=0.02cm](9.76,3.2784376)(10.26,2.3584375)
\psdots[dotsize=0.24,fillstyle=solid,dotstyle=o](10.5,3.3184376)
\psdots[dotsize=0.24,fillstyle=solid,dotstyle=o](9.72,3.3184376)
\psdots[dotsize=0.24,fillstyle=solid,dotstyle=o](10.3,2.2784376)
\psdots[dotsize=0.24](5.02,1.2984375)
\psline[linewidth=0.02cm](5.02,2.1384375)(5.02,1.3984375)
\psline[linewidth=0.02cm](5.44,3.2184374)(5.06,2.3184376)
\psline[linewidth=0.02cm](4.5,3.2384374)(5.0,2.3184376)
\psdots[dotsize=0.24,fillstyle=solid,dotstyle=o](5.48,3.2984376)
\psdots[dotsize=0.24,fillstyle=solid,dotstyle=o](4.46,3.2984376)
\psdots[dotsize=0.24,fillstyle=solid,dotstyle=o](5.04,2.2584374)
\psdots[dotsize=0.24](7.78,1.2984375)
\psline[linewidth=0.02cm](8.18,2.2584374)(7.8,1.3584375)
\psline[linewidth=0.02cm](7.24,2.2784376)(7.74,1.3584375)
\psline[linewidth=0.02cm](6.7,3.2384374)(7.2,2.3184376)
\psdots[dotsize=0.24,fillstyle=solid,dotstyle=o](6.66,3.2984376)
\psdots[dotsize=0.24,fillstyle=solid,dotstyle=o](7.2,2.3384376)
\psline[linewidth=0.02cm](8.62,3.2384374)(8.24,2.3384376)
\psdots[dotsize=0.24,fillstyle=solid,dotstyle=o](8.66,3.3184376)
\psdots[dotsize=0.24,fillstyle=solid,dotstyle=o](8.22,2.3384376)
\psdots[dotsize=0.24,fillstyle=solid,dotstyle=o](3.02,-0.9215625)
\psdots[dotsize=0.24](3.02,-1.9215626)
\psline[linewidth=0.02cm](3.0,-0.0215625)(3.02,-0.8015625)
\psline[linewidth=0.02cm,linestyle=dotted,dotsep=0.16cm](3.0,-1.0415626)(3.02,-1.8015625)
\psbezier[linewidth=0.02](3.0016046,0.06680875)(2.52,-0.0215625)(2.315606,-0.3615625)(2.34,-1.0215625)(2.364394,-1.6815625)(2.7,-1.7615625)(3.08,-1.9701579)
\psdots[dotsize=0.24,fillstyle=solid,dotstyle=o](3.02,0.0784375)
\psdots[dotsize=0.24](5.08,-1.9215626)
\psline[linewidth=0.02cm](4.82,-0.0015625)(5.08,-0.8015625)
\psline[linewidth=0.02cm,linestyle=dotted,dotsep=0.16cm](5.08,-1.1015625)(5.1,-1.8615625)
\psbezier[linewidth=0.02](4.7521915,0.0384375)(4.42,-0.2015625)(4.32,-0.7064177)(4.4382515,-1.208674)(4.556503,-1.7109302)(4.789126,-1.7517709)(5.14,-1.9615625)
\psdots[dotsize=0.24,fillstyle=solid,dotstyle=o](4.8,0.0784375)
\psline[linewidth=0.02cm](5.54,-0.0015625)(5.16,-0.9015625)
\psbezier[linewidth=0.02](5.62,0.0384375)(5.82,-0.06260417)(5.88,-0.6615625)(5.8,-1.1336459)(5.72,-1.6057292)(5.34,-1.8415625)(5.1,-1.9015625)
\psdots[dotsize=0.24,fillstyle=solid,dotstyle=o](5.1,-0.9215625)
\psdots[dotsize=0.24,fillstyle=solid,dotstyle=o](5.58,0.0784375)
\psdots[dotsize=0.24](7.8,-1.9215626)
\psline[linewidth=0.02cm,linestyle=dotted,dotsep=0.16cm](8.2,-0.9615625)(7.82,-1.8615625)
\psline[linewidth=0.02cm,linestyle=dotted,dotsep=0.16cm](7.26,-0.9415625)(7.76,-1.8615625)
\psline[linewidth=0.02cm](6.72,0.0184375)(7.22,-0.9015625)
\psdots[dotsize=0.24,fillstyle=solid,dotstyle=o](6.68,0.0784375)
\psdots[dotsize=0.24,fillstyle=solid,dotstyle=o](7.22,-0.8815625)
\psline[linewidth=0.02cm](8.64,0.0184375)(8.26,-0.8815625)
\psdots[dotsize=0.24,fillstyle=solid,dotstyle=o](8.24,-0.8815625)
\psbezier[linewidth=0.02](6.5921917,-0.0215625)(6.26,-0.2615625)(6.44,-0.86641765)(6.72,-1.3415625)(7.0,-1.8167074)(7.38,-1.9415625)(7.78,-1.9615625)
\psbezier[linewidth=0.02](8.72,0.0184375)(8.92,-0.08260417)(9.08,-0.6615625)(8.9,-1.1536459)(8.72,-1.6457292)(8.14,-1.9015625)(7.86,-1.9415625)
\psdots[dotsize=0.24](10.88,-1.9415625)
\psline[linewidth=0.02cm](11.96,-0.0215625)(11.56,-0.8815625)
\psline[linewidth=0.02cm,linestyle=dotted,dotsep=0.16cm](10.28,-1.0415626)(10.66,-1.8015625)
\psline[linewidth=0.02cm,linestyle=dotted,dotsep=0.16cm](11.48,-1.0615625)(10.9,-1.9215626)
\psline[linewidth=0.02cm](11.36,0.0184375)(11.5,-0.8615625)
\psdots[dotsize=0.24,fillstyle=solid,dotstyle=o](11.34,0.0784375)
\psdots[dotsize=0.24,fillstyle=solid,dotstyle=o](11.5,-0.9415625)
\psline[linewidth=0.02cm](10.42,0.0584375)(10.22,-0.8415625)
\psline[linewidth=0.02cm](9.68,0.0584375)(10.18,-0.8615625)
\psdots[dotsize=0.24,fillstyle=solid,dotstyle=o](10.42,0.0984375)
\psdots[dotsize=0.24,fillstyle=solid,dotstyle=o](10.22,-0.9415625)
\psbezier[linewidth=0.02](9.58,0.0184375)(9.46,-0.3415625)(9.48,-0.8615625)(9.74,-1.3015625)(10.0,-1.7415625)(10.4,-1.8615625)(10.8,-1.9215626)
\psbezier[linewidth=0.02](12.0,0.0184375)(12.2,-0.08260417)(12.22,-0.6615625)(12.0,-1.1815625)(11.78,-1.7015625)(11.22,-1.9015625)(10.94,-1.9415625)
\psdots[dotsize=0.24,fillstyle=solid,dotstyle=o](12.04,0.0784375)
\psdots[dotsize=0.24,fillstyle=solid,dotstyle=o](9.64,0.0984375)
\psdots[dotsize=0.24,fillstyle=solid,dotstyle=o](8.68,0.0984375)
\psbezier[linewidth=0.02](11.3,-0.0215625)(11.192941,-0.3615625)(11.06,-0.8215625)(11.0,-1.1415625)(10.94,-1.4615625)(10.88,-1.8015625)(10.964705,-1.9815625)
\psdots[dotsize=0.24,fillstyle=solid,dotstyle=o](0.14,3.2384374)
\psdots[dotsize=0.24,fillstyle=solid,dotstyle=o](0.14,2.2384374)
\psdots[dotsize=0.24](0.14,1.2384375)
\psline[linewidth=0.02cm](0.12,3.1384375)(0.14,2.3584375)
\psline[linewidth=0.02cm](0.12,2.1184375)(0.14,1.3584375)
\psdots[dotsize=0.24,fillstyle=solid,dotstyle=o](0.12,0.0384375)
\psdots[dotsize=0.24,fillstyle=solid,dotstyle=o](0.12,-0.9615625)
\psdots[dotsize=0.24](0.12,-1.9615625)
\psline[linewidth=0.02cm](0.1,-0.0615625)(0.12,-0.8415625)
\psline[linewidth=0.02cm](0.1,-1.0815625)(0.12,-1.8415625)
\usefont{T1}{ptm}{m}{n}
\rput(0.6196875,3.2084374){$\{1\}$}
\usefont{T1}{ptm}{m}{n}
\rput(0.61,2.2284374){$\{0\}$}
\usefont{T1}{ptm}{m}{n}
\rput(0.59703124,1.2484375){$\emptyset$}
\usefont{T1}{ptm}{m}{n}
\rput(0.7,0.0284375){$\{0,1\}$}
\usefont{T1}{ptm}{m}{n}
\rput(0.6170313,-0.9715625){$\emptyset$}
\usefont{T1}{ptm}{m}{n}
\rput(0.59703124,-1.9515625){$\emptyset$}
\usefont{T1}{ptm}{m}{n}
\rput(3.0051563,-3.2515626){(i)}
\usefont{T1}{ptm}{m}{n}
\rput(5.1251564,-3.2115624){(ii)}
\usefont{T1}{ptm}{m}{n}
\rput(7.7851562,-3.1915624){(iii)}
\usefont{T1}{ptm}{m}{n}
\rput(10.875156,-3.1915624){(iv)}
\end{pspicture}
}

\end{center}
\caption{{\small The path $P_3$ of length $2$ embedded in two different ways in the closure of a tree (also a path of length $2$ rooted at an endpoint).
Left-most, the tree is coloured with subsets of $\{0,1\}$ indicating for each vertex to which of its predecessors it is joined in order to make the subgraph of its closure in (i). These colours are propagated in branching on vertices of the coloured roted tree, so that we obtain the subgraphs shown to the right upon (ii) branching 2-fold at the leaf only, (iii)  branching 2-fold at the central vertex only, and (iv) branching $2$-fold at both non-root vertices.} }\label{fig:branching}
\end{figure}
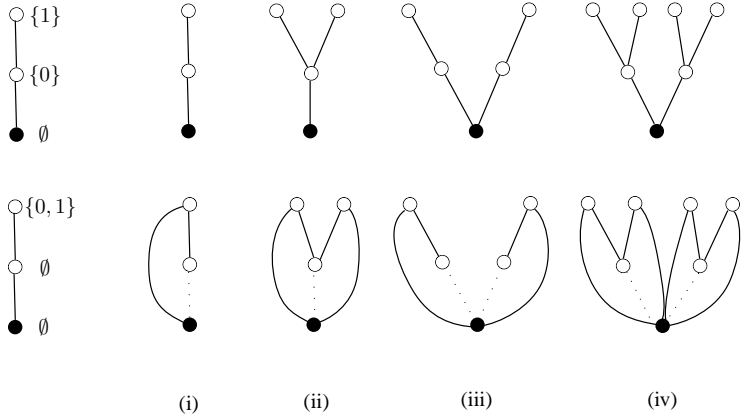

A vertex $s\in V(T)$ has $\prod_{t\in P(s)}k_t$ copies in $T^{\mathbf{k}}$, all of the same colour $(A_s,F_s)$.
We can now use the set $A_s$ given in this colour to construct a spanning subgraph of $T^{\mathbf{k}}$, which shall be denoted by $T^{\mathbf{k}}(H)$. (For each copy of vertex $s$, send an edge joining it to its predecessors at level $i$ in $T^{\mathbf{k}}$ for $i\in A_s$.)
See Figures~\ref{fig:branching}, \ref{fig:branchingC4} and~\ref{fig:branchingK13} for some examples.

The second component of colour $(A_s,F_s)$ on the copies of $s$ in $T^{\mathbf{k}}(H)$ tells us to ornament each of these copies of $s$ with the graph $F_s$. Upon composition we then obtain the graph $T^{\mathbf{k}}(H)[\{F_s:s\in V(H)\}]$, in which the $\prod_{t\in P(s)}k_t$ copies of vertex $s$ in $T^{\mathbf{k}}(H)$ all have the ornament $F_s$. We call this way of producing a sequence from an ornamented graph $H$ {\em branched composition}.

We are now ready to state the main theorem of this section:

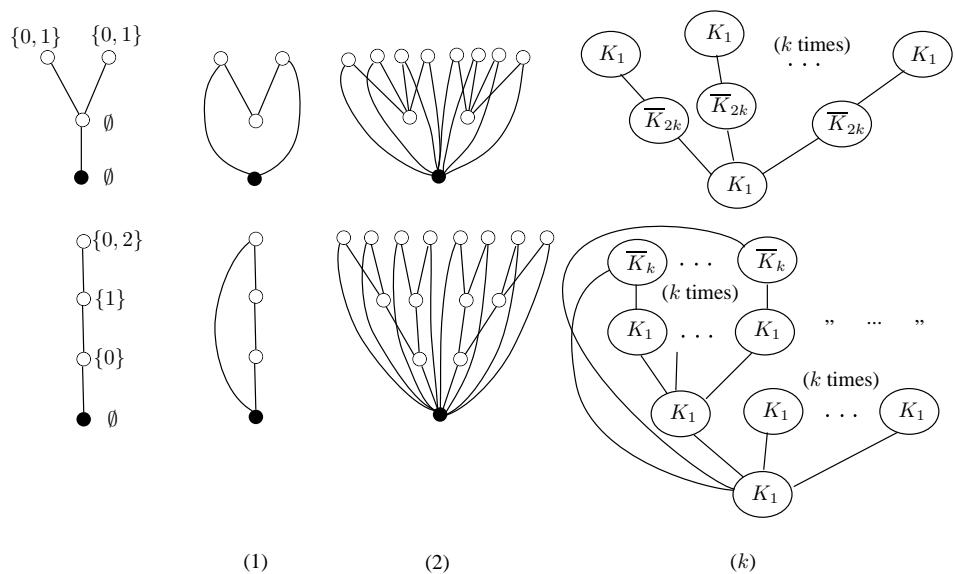
\begin{figure}
\begin{center}
\scalebox{0.8} 
{
\begin{pspicture}(0,-4.7384377)(15.550312,4.6984377)
\psline[linewidth=0.02cm](7.3303127,-1.1815625)(7.0503125,-2.0215626)
\psbezier[linewidth=0.02](6.8703127,0.8584375)(6.9103127,0.5384375)(6.8503127,-0.2415625)(6.8903127,-0.7615625)(6.9303126,-1.2815624)(6.9903126,-1.7215625)(6.9903126,-1.9815625)
\psline[linewidth=0.02cm](5.4503126,0.8384375)(6.0303125,-0.0415625)
\psline[linewidth=0.02cm](5.8903127,0.7984375)(6.1103125,-0.0815625)
\psline[linewidth=0.02cm](6.3903127,0.7984375)(6.5703125,-0.1015625)
\psline[linewidth=0.02cm](6.8303127,0.7784375)(6.6303124,-0.1215625)
\psline[linewidth=0.02cm](7.3703127,0.8584375)(7.4503126,-0.1415625)
\psline[linewidth=0.02cm](7.8103123,0.8384375)(7.4503126,-0.1415625)
\psline[linewidth=0.02cm](8.290313,0.7784375)(8.030313,-0.1015625)
\psline[linewidth=0.02cm](8.770312,0.8384375)(8.030313,-0.1215625)
\psbezier[linewidth=0.02](5.3703127,0.8384375)(5.2503123,0.4784375)(5.4503126,-0.2815625)(5.6903124,-0.7015625)(5.9303126,-1.1215625)(6.5903125,-1.9215626)(6.9703126,-1.9615625)
\psbezier[linewidth=0.02](5.8103123,0.8384375)(5.6903124,0.4784375)(5.7103124,-0.2815625)(5.9303126,-0.6815625)(6.1503124,-1.0815625)(6.6703124,-1.9615625)(7.0303125,-1.9615625)
\psbezier[linewidth=0.02](6.3303127,0.8584375)(6.2103124,0.4984375)(6.2303123,-0.5615625)(6.4303126,-1.0215625)(6.6303124,-1.4815625)(6.7703123,-1.9815625)(7.1103125,-2.0215626)
\psbezier[linewidth=0.02](7.3303127,0.8984375)(7.1503124,0.5984375)(7.0703125,-0.2015625)(7.0503125,-0.6615625)(7.0303125,-1.1215625)(7.0503125,-1.7415625)(7.0303125,-2.0215626)
\psbezier[linewidth=0.02](7.8303127,0.8384375)(7.8103123,0.4384375)(7.7425075,-0.44909674)(7.6103125,-0.9615625)(7.4781175,-1.4740282)(7.2303123,-1.8015625)(7.1103125,-2.0215626)
\psbezier[linewidth=0.02](8.330313,0.8184375)(8.31064,0.4103294)(8.203947,-0.5151954)(7.9703126,-1.0015625)(7.736678,-1.4879296)(7.3903127,-1.9615625)(7.0703125,-2.0415626)
\psbezier[linewidth=0.02](8.850312,0.8184375)(8.830641,0.4103294)(8.483947,-0.45519543)(8.230312,-0.9415625)(7.976678,-1.4279296)(7.4103127,-2.0215626)(7.0103126,-2.0615625)
\psline[linewidth=0.02cm](6.6303124,-0.2015625)(6.6903124,-1.0815625)
\psline[linewidth=0.02cm](7.9703126,-0.2215625)(7.4103127,-1.0815625)
\psline[linewidth=0.02cm](7.4503126,-0.2215625)(7.3303127,-1.0815625)
\psline[linewidth=0.02cm](6.1303124,-0.2215625)(6.6503124,-1.1215625)
\psline[linewidth=0.02cm](7.6503124,3.8784375)(7.4903126,2.8984375)
\psbezier[linewidth=0.02](5.4824576,3.8184376)(5.2031765,3.5304618)(5.5503125,2.9784374)(5.8303127,2.6384375)(6.1103125,2.2984376)(6.69875,1.9589301)(6.9103127,1.9584374)
\psline[linewidth=0.02cm](1.0903125,0.7584375)(1.1103125,-0.0215625)
\psdots[dotsize=0.24](7.0103126,1.9384375)
\psline[linewidth=0.02cm](7.9303126,3.8584375)(7.5303125,2.9984374)
\psline[linewidth=0.02cm](7.3303127,3.8984375)(7.4703126,3.0184374)
\psdots[dotsize=0.24,fillstyle=solid,dotstyle=o](7.3103123,3.9584374)
\psline[linewidth=0.02cm](6.7703123,3.8584375)(6.5303125,3.0184374)
\psline[linewidth=0.02cm](5.9903126,3.9184375)(6.4903126,2.9984374)
\psdots[dotsize=0.24,fillstyle=solid,dotstyle=o](5.9903126,3.9384375)
\psdots[dotsize=0.24](1.0703125,1.9184375)
\psline[linewidth=0.02cm](1.0703125,2.7584374)(1.0703125,2.0184374)
\psline[linewidth=0.02cm](1.4903125,3.8384376)(1.1103125,2.9384375)
\psline[linewidth=0.02cm](0.5503125,3.8584375)(1.0503125,2.9384375)
\psdots[dotsize=0.24,fillstyle=solid,dotstyle=o](1.5303125,3.9184375)
\psdots[dotsize=0.24,fillstyle=solid,dotstyle=o](0.5103125,3.9184375)
\psdots[dotsize=0.24,fillstyle=solid,dotstyle=o](1.0903125,2.8784375)
\psdots[dotsize=0.24,fillstyle=solid,dotstyle=o](1.1103125,-0.1015625)
\psdots[dotsize=0.24,fillstyle=solid,dotstyle=o](1.1103125,-1.1015625)
\psdots[dotsize=0.24](1.1103125,-2.1015625)
\psline[linewidth=0.02cm](1.0903125,-0.2015625)(1.1103125,-0.9815625)
\psline[linewidth=0.02cm](1.0903125,-1.2215625)(1.1103125,-1.9815625)
\usefont{T1}{ptm}{m}{n}
\rput(1.65,4.2684374){$\{0,1\}$}
\usefont{T1}{ptm}{m}{n}
\rput(1.5273438,2.8284376){$\emptyset$}
\usefont{T1}{ptm}{m}{n}
\rput(1.5273438,1.9284375){$\emptyset$}
\usefont{T1}{ptm}{m}{n}
\rput(1.55,-0.1115625){$\{1\}$}
\usefont{T1}{ptm}{m}{n}
\rput(1.53,-1.0915625){$\{0\}$}
\usefont{T1}{ptm}{m}{n}
\rput(1.5873437,-2.0915625){$\emptyset$}
\usefont{T1}{ptm}{m}{n}
\rput(3.9654686,-4.4915624){(1)}
\usefont{T1}{ptm}{m}{n}
\rput(6.985469,-4.5115623){(2)}
\usefont{T1}{ptm}{m}{n}
\rput(12.065469,-4.4915624){($k$)}
\usefont{T1}{ptm}{m}{n}
\rput(0.31,4.2484374){$\{0,1\}$}
\psdots[dotsize=0.24,fillstyle=solid,dotstyle=o](1.1103125,0.8584375)
\usefont{T1}{ptm}{m}{n}
\rput(1.67,0.8684375){$\{0,2\}$}
\psdots[dotsize=0.24](3.9503126,1.8984375)
\psline[linewidth=0.02cm](4.3703127,3.8184376)(3.9903126,2.9184375)
\psline[linewidth=0.02cm](3.4303124,3.8384376)(3.9303124,2.9184375)
\psdots[dotsize=0.24,fillstyle=solid,dotstyle=o](3.9703126,2.8584375)
\psbezier[linewidth=0.02](3.4424577,3.8584375)(3.1631765,3.5704618)(3.0703125,3.3233202)(3.1337287,2.763364)(3.197145,2.2034078)(3.6587498,2.0389302)(3.9303124,1.9584374)
\psdots[dotsize=0.24,fillstyle=solid,dotstyle=o](3.3903124,3.8984375)
\psbezier[linewidth=0.02](4.4903126,3.8784375)(4.6903124,3.7773957)(4.7503123,3.1784375)(4.6703124,2.7063541)(4.5903125,2.2342708)(4.2103124,1.9984375)(3.9703126,1.9384375)
\psdots[dotsize=0.24,fillstyle=solid,dotstyle=o](4.4103127,3.8984375)
\psline[linewidth=0.02cm](3.9503126,0.7984375)(3.9703126,0.0184375)
\psdots[dotsize=0.24,fillstyle=solid,dotstyle=o](3.9703126,-0.0615625)
\psdots[dotsize=0.24,fillstyle=solid,dotstyle=o](3.9703126,-1.0615625)
\psdots[dotsize=0.24](3.9703126,-2.0615625)
\psline[linewidth=0.02cm](3.9503126,-0.1615625)(3.9703126,-0.9415625)
\psline[linewidth=0.02cm](3.9503126,-1.1815625)(3.9703126,-1.9415625)
\psbezier[linewidth=0.02](3.882504,0.8584375)(3.5503125,0.6184375)(3.2903125,-0.08641765)(3.2903125,-0.5815625)(3.2903125,-1.0767074)(3.4903126,-1.7215625)(3.9103124,-1.9615625)
\psdots[dotsize=0.24,fillstyle=solid,dotstyle=o](3.9703126,0.8984375)
\psline[linewidth=0.02cm](5.5303125,3.8584375)(6.4703126,2.9384375)
\psdots[dotsize=0.24,fillstyle=solid,dotstyle=o](5.5103126,3.8784375)
\psline[linewidth=0.02cm](6.4103127,3.7984376)(6.5303125,2.8784375)
\psdots[dotsize=0.24,fillstyle=solid,dotstyle=o](6.5303125,2.9184375)
\psline[linewidth=0.02cm](8.390312,3.8384376)(7.5303125,2.9384375)
\psdots[dotsize=0.24,fillstyle=solid,dotstyle=o](7.4903126,2.9184375)
\psbezier[linewidth=0.02](5.9224577,3.8184376)(5.6431766,3.5304618)(5.9903126,2.9784374)(6.2703123,2.6384375)(6.5503125,2.2984376)(6.75875,1.9789301)(7.0103126,1.9784375)
\psbezier[linewidth=0.02](6.4503126,3.8384376)(6.58926,3.615677)(6.7415624,3.1011457)(6.8503127,2.7984376)(6.9590626,2.4957292)(6.909219,1.9764382)(7.0903125,1.9584374)
\psbezier[linewidth=0.02](6.8503127,3.8584375)(6.8903127,3.5784376)(7.0103126,2.9984374)(7.0303125,2.4184375)(7.0503125,1.8384376)(6.81875,1.9189302)(7.0303125,1.9184375)
\psdots[dotsize=0.24,fillstyle=solid,dotstyle=o](6.3903127,3.9384375)
\psdots[dotsize=0.24,fillstyle=solid,dotstyle=o](6.8303127,3.9384375)
\psbezier[linewidth=0.02](7.2503123,3.8784375)(7.1703124,3.5784376)(7.0703125,2.9784374)(7.0503125,2.5184374)(7.0303125,2.0584376)(6.9503126,2.0984375)(7.0503125,1.8784375)
\psbezier[linewidth=0.02](7.5903125,3.8984375)(7.5103126,3.5984375)(7.2703123,3.0584376)(7.1903124,2.6184375)(7.1103125,2.1784375)(7.0503125,2.0984375)(7.1103125,1.9384375)
\psdots[dotsize=0.24,fillstyle=solid,dotstyle=o](7.6703124,3.9584374)
\psbezier[linewidth=0.02](8.010312,3.8784375)(7.9421644,3.5631313)(7.79772,2.9555805)(7.6103125,2.5984375)(7.422905,2.2412946)(7.0503125,2.0666008)(7.0703125,1.8784375)
\psdots[dotsize=0.24,fillstyle=solid,dotstyle=o](8.010312,3.9584374)
\psbezier[linewidth=0.02](8.430312,3.8584375)(8.361162,3.5431314)(8.394593,3.0755804)(8.02443,2.5784376)(7.654267,2.0812945)(7.0503125,2.0466008)(7.0503125,1.8784375)
\psdots[dotsize=0.24,fillstyle=solid,dotstyle=o](8.410313,3.9184375)
\psdots[dotsize=0.24,fillstyle=solid,dotstyle=o](6.6303124,-0.1215625)
\psdots[dotsize=0.24,fillstyle=solid,dotstyle=o](6.6903124,-1.1015625)
\psdots[dotsize=0.24](7.0303125,-2.0215626)
\psdots[dotsize=0.24,fillstyle=solid,dotstyle=o](6.8703127,0.9184375)
\psdots[dotsize=0.24,fillstyle=solid,dotstyle=o](7.3703127,-1.1015625)
\psdots[dotsize=0.24,fillstyle=solid,dotstyle=o](6.0903125,-0.1215625)
\psdots[dotsize=0.24,fillstyle=solid,dotstyle=o](8.010312,-0.1215625)
\psdots[dotsize=0.24,fillstyle=solid,dotstyle=o](7.4703126,-0.1215625)
\psdots[dotsize=0.24,fillstyle=solid,dotstyle=o](5.4303126,0.9184375)
\psdots[dotsize=0.24,fillstyle=solid,dotstyle=o](5.8903127,0.9184375)
\psdots[dotsize=0.24,fillstyle=solid,dotstyle=o](6.3903127,0.9184375)
\psdots[dotsize=0.24,fillstyle=solid,dotstyle=o](8.810312,0.9184375)
\psdots[dotsize=0.24,fillstyle=solid,dotstyle=o](8.330313,0.9184375)
\psline[linewidth=0.02cm](6.7303123,-1.2215625)(7.0103126,-1.9815625)
\psdots[dotsize=0.24,fillstyle=solid,dotstyle=o](7.3703127,0.9184375)
\psdots[dotsize=0.24,fillstyle=solid,dotstyle=o](7.8303127,0.9184375)
\usefont{T1}{ptm}{m}{n}
\rput(12.015312,1.7884375){$K_1$}
\usefont{T1}{ptm}{m}{n}
\rput(10.748438,2.8884375){$\overline{K}_{2k}$}
\usefont{T1}{ptm}{m}{n}
\rput(11.848437,3.1284375){$\overline{K}_{2k}$}
\usefont{T1}{ptm}{m}{n}
\rput(13.788438,2.8284376){$\overline{K}_{2k}$}
\usefont{T1}{ptm}{m}{n}
\rput(9.895312,3.9684374){$K_1$}
\usefont{T1}{ptm}{m}{n}
\rput(11.675312,4.3084373){$K_1$}
\usefont{T1}{ptm}{m}{n}
\rput(15.075313,3.9484375){$K_1$}
\psline[linewidth=0.02cm](13.330313,2.5784376)(12.390312,1.9784375)
\psellipse[linewidth=0.02,dimen=outer](11.970312,1.7884375)(0.5,0.39)
\psellipse[linewidth=0.02,dimen=outer](10.670313,2.8684375)(0.5,0.39)
\psellipse[linewidth=0.02,dimen=outer](11.790313,3.1084375)(0.5,0.39)
\psellipse[linewidth=0.02,dimen=outer](13.710313,2.8084376)(0.5,0.39)
\psellipse[linewidth=0.02,dimen=outer](9.870313,3.9684374)(0.5,0.39)
\psellipse[linewidth=0.02,dimen=outer](11.590313,4.3084373)(0.5,0.39)
\psellipse[linewidth=0.02,dimen=outer](15.050312,3.9684374)(0.5,0.39)
\psline[linewidth=0.02cm](14.750313,3.6984375)(13.970312,3.1384375)
\psline[linewidth=0.02cm](10.030313,3.6384375)(10.430312,3.1984375)
\psline[linewidth=0.02cm](10.990313,2.5784376)(11.550312,1.9584374)
\psline[linewidth=0.02cm](11.630313,3.9584374)(11.710313,3.4784374)
\psline[linewidth=0.02cm](11.810312,2.6984375)(11.910313,2.1984375)
\psdots[dotsize=0.048](12.850312,3.7984376)
\psdots[dotsize=0.048](13.090313,3.7984376)
\psdots[dotsize=0.048](13.310312,3.7984376)
\usefont{T1}{ptm}{m}{n}
\rput(12.435312,-3.3515625){$K_1$}
\psellipse[linewidth=0.02,dimen=outer](12.390312,-3.3515625)(0.5,0.39)
\psline[linewidth=0.02cm](12.470312,-2.3215625)(12.410313,-2.9415624)
\usefont{T1}{ptm}{m}{n}
\rput(11.075313,-2.0115626){$K_1$}
\psellipse[linewidth=0.02,dimen=outer](11.030313,-2.0115626)(0.5,0.39)
\usefont{T1}{ptm}{m}{n}
\rput(12.615313,-1.9715625){$K_1$}
\psellipse[linewidth=0.02,dimen=outer](12.5703125,-1.9715625)(0.5,0.39)
\usefont{T1}{ptm}{m}{n}
\rput(14.875313,-1.9715625){$K_1$}
\psellipse[linewidth=0.02,dimen=outer](14.830313,-1.9715625)(0.5,0.39)
\psbezier[linewidth=0.02](9.850312,0.3784375)(9.450313,0.1784375)(9.050312,-0.2015625)(9.290313,-1.5015625)(9.530313,-2.8015625)(11.5703125,-3.2815626)(11.910313,-3.2815626)
\psline[linewidth=0.02cm](10.970312,-0.9815625)(10.950313,-1.6215625)
\usefont{T1}{ptm}{m}{n}
\rput(10.378437,0.5084375){$\overline{K}_k$}
\psellipse[linewidth=0.02,dimen=outer](10.310312,0.5084375)(0.5,0.39)
\usefont{T1}{ptm}{m}{n}
\rput(10.355312,-0.6315625){$K_1$}
\psellipse[linewidth=0.02,dimen=outer](10.310312,-0.6515625)(0.5,0.39)
\psdots[dotsize=0.048](13.470312,-2.0415626)
\psdots[dotsize=0.048](13.710313,-2.0415626)
\psdots[dotsize=0.048](13.930312,-2.0415626)
\psline[linewidth=0.02cm](11.250313,-2.3615625)(12.0703125,-3.0615625)
\psline[linewidth=0.02cm](14.630313,-2.3215625)(12.910313,-3.2215624)
\usefont{T1}{ptm}{m}{n}
\rput(13.215468,4.1084375){($k$ times)}
\usefont{T1}{ptm}{m}{n}
\rput(13.695469,-1.4515625){($k$ times)}
\psline[linewidth=0.02cm](10.370313,-1.0415626)(10.810312,-1.6815625)
\psline[linewidth=0.02cm](12.210313,-1.0015625)(11.450313,-1.7615625)
\usefont{T1}{ptm}{m}{n}
\rput(12.475312,-0.6315625){$K_1$}
\psellipse[linewidth=0.02,dimen=outer](12.430312,-0.6515625)(0.5,0.39)
\usefont{T1}{ptm}{m}{n}
\rput(11.355469,-0.0115625){($k$ times)}
\psdots[dotsize=0.048](11.0703125,-0.7215625)
\psdots[dotsize=0.048](11.310312,-0.7215625)
\psdots[dotsize=0.048](11.530313,-0.7215625)
\psdots[dotsize=0.048](11.050312,0.4784375)
\psdots[dotsize=0.048](11.290313,0.4784375)
\psdots[dotsize=0.048](11.510312,0.4784375)
\usefont{T1}{ptm}{m}{n}
\rput(12.538438,0.5484375){$\overline{K}_k$}
\psellipse[linewidth=0.02,dimen=outer](12.470312,0.5484375)(0.5,0.39)
\psline[linewidth=0.02cm](10.290313,0.1384375)(10.290313,-0.2815625)
\psline[linewidth=0.02cm](12.470312,0.1384375)(12.470312,-0.2615625)
\rput(15,-.5){,,}
\rput(13.5,-.5){,,}
\rput(14.25,-.5){...}
\psbezier[linewidth=0.02](12.090313,0.7584375)(11.970312,1.0984375)(9.170313,1.5584375)(9.0703125,0.1184375)(8.970312,-1.3215625)(11.610312,-3.2615626)(11.930312,-3.2615626)
\end{pspicture}
}

\end{center}
\caption{{\small Two embeddings of $C_4$ in a rooted tree, and the result of branching $2$-fold and $k$-fold at each non-root vertex. In the rightmost graphs the lines represent graph joins of the ornament graphs on the vertices. The top-right graph can be obtained as a composition of an ornamented graph (the star $K_{1,k}$ with each edge replaced by a path of length two, ornamented with $\overline{K}_{2k}$ on some vertices). For the bottom right graph just one of the $k$ branches is depicted, and that elliptically too; this graph cannot be obtained as a composition of an ornamented graph.}}\label{fig:branchingC4}
\end{figure}


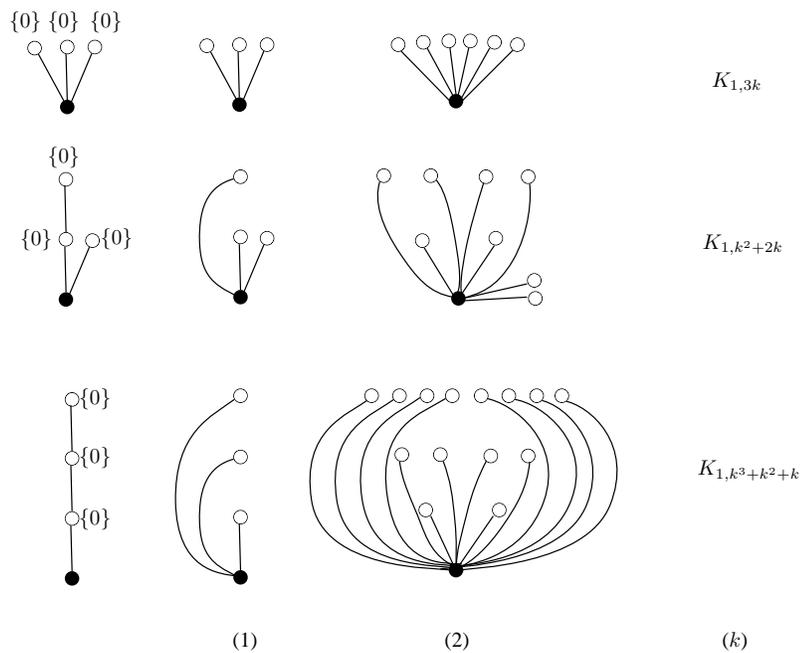
\begin{figure}
\begin{center}
%
\scalebox{0.8} 
{
\begin{pspicture}(0,-5.3534374)(14.692187,5.3534374)
\psbezier[linewidth=0.02](6.8368454,2.6299052)(7.060263,2.6470497)(7.1969757,2.2146482)(7.2903123,1.7034374)(7.3836493,1.1922268)(7.4703126,0.6634375)(7.2703123,0.5834375)
\psdots[dotsize=0.24,fillstyle=solid,dotstyle=o](3.7503126,1.6034375)
\psdots[dotsize=0.24](3.7503126,0.6034375)
\psline[linewidth=0.02cm](3.7303126,1.4834375)(3.7503126,0.7234375)
\psbezier[linewidth=0.02](3.7319171,2.5918088)(3.2503126,2.5034375)(3.0459187,2.1634376)(3.0703125,1.5034375)(3.0947063,0.8434375)(3.4303124,0.7634375)(3.8103125,0.5548421)
\psdots[dotsize=0.24,fillstyle=solid,dotstyle=o](3.7503126,2.6034374)
\psdots[dotsize=0.24](7.3703127,0.5834375)
\psline[linewidth=0.02cm](6.7903123,1.4434375)(7.3303127,0.5634375)
\psline[linewidth=0.02cm](7.9703126,1.4634376)(7.3903127,0.6034375)
\psdots[dotsize=0.24,fillstyle=solid,dotstyle=o](7.8303127,2.6034374)
\psdots[dotsize=0.24,fillstyle=solid,dotstyle=o](7.9903126,1.5834374)
\psdots[dotsize=0.24,fillstyle=solid,dotstyle=o](6.9103127,2.6234374)
\psdots[dotsize=0.24,fillstyle=solid,dotstyle=o](6.7703123,1.5434375)
\psbezier[linewidth=0.02](6.0703125,2.5434375)(5.9503126,2.1834376)(6.1103125,1.7634375)(6.3903127,1.3434376)(6.6703124,0.9234375)(6.8903127,0.6634375)(7.2903123,0.6034375)
\psbezier[linewidth=0.02](8.550312,2.5234375)(8.590313,2.2634375)(8.551369,1.6859375)(8.276022,1.2134376)(8.000674,0.7409375)(7.70566,0.6534375)(7.4303126,0.5834375)
\psdots[dotsize=0.24,fillstyle=solid,dotstyle=o](8.530313,2.6034374)
\psdots[dotsize=0.24,fillstyle=solid,dotstyle=o](6.1303124,2.6234374)
\psbezier[linewidth=0.02](7.7903123,2.5034375)(7.683254,2.1634376)(7.5503125,1.7034374)(7.4903126,1.3834375)(7.4303126,1.0634375)(7.3703127,0.7234375)(7.4550185,0.5434375)
\psdots[dotsize=0.24,fillstyle=solid,dotstyle=o](0.8703125,4.7634373)
\psdots[dotsize=0.24](0.8703125,3.7634375)
\psline[linewidth=0.02cm](0.8503125,4.6434374)(0.8703125,3.8834374)
\psdots[dotsize=0.24,fillstyle=solid,dotstyle=o](0.8503125,2.5634375)
\psdots[dotsize=0.24,fillstyle=solid,dotstyle=o](0.8503125,1.5634375)
\psdots[dotsize=0.24](0.8503125,0.5634375)
\psline[linewidth=0.02cm](0.8303125,2.4634376)(0.8503125,1.6834375)
\psline[linewidth=0.02cm](0.8303125,1.4434375)(0.8503125,0.6834375)
\usefont{T1}{ptm}{m}{n}
\rput(1.51,5.1734376){$\{0\}$}
\usefont{T1}{ptm}{m}{n}
\rput(0.81,2.9334376){$\{0\}$}
\usefont{T1}{ptm}{m}{n}
\rput(3.8254688,-5.1265626){(1)}
\usefont{T1}{ptm}{m}{n}
\rput(7.3454685,-5.1265626){(2)}
\usefont{T1}{ptm}{m}{n}
\rput(11.965468,-5.1265626){($k$)}
\psline[linewidth=0.02cm](1.2903125,4.6834373)(0.9103125,3.7834375)
\psdots[dotsize=0.24,fillstyle=solid,dotstyle=o](1.3303125,4.7634373)
\psline[linewidth=0.02cm](0.3703125,4.6834373)(0.8703125,3.7634375)
\psdots[dotsize=0.24,fillstyle=solid,dotstyle=o](0.3303125,4.7434373)
\psline[linewidth=0.02cm](1.2503124,1.4634376)(0.8703125,0.5634375)
\psdots[dotsize=0.24,fillstyle=solid,dotstyle=o](1.2903125,1.5434375)
\usefont{T1}{ptm}{m}{n}
\rput(0.17,5.1734376){$\{0\}$}
\usefont{T1}{ptm}{m}{n}
\rput(0.81,5.1734376){$\{0\}$}
\usefont{T1}{ptm}{m}{n}
\rput(1.69,1.5734375){$\{0\}$}
\usefont{T1}{ptm}{m}{n}
\rput(0.35,1.5534375){$\{0\}$}
\psdots[dotsize=0.24,fillstyle=solid,dotstyle=o](0.9503125,-3.0765624)
\psdots[dotsize=0.24](0.9503125,-4.0765624)
\psline[linewidth=0.02cm](0.9303125,-2.1765625)(0.9503125,-2.9565625)
\psline[linewidth=0.02cm](0.9303125,-3.1965625)(0.9503125,-3.9565625)
\usefont{T1}{ptm}{m}{n}
\rput(1.33,-2.0665624){$\{0\}$}
\usefont{T1}{ptm}{m}{n}
\rput(1.33,-3.0865624){$\{0\}$}
\psdots[dotsize=0.24,fillstyle=solid,dotstyle=o](0.9503125,-1.0965625)
\psline[linewidth=0.02cm](0.9303125,-1.1965625)(0.9503125,-1.9765625)
\usefont{T1}{ptm}{m}{n}
\rput(1.33,-1.0865625){$\{0\}$}
\psdots[dotsize=0.24,fillstyle=solid,dotstyle=o](0.9503125,-2.0765624)
\psline[linewidth=0.02cm](4.1503124,1.5034375)(3.7703125,0.6034375)
\psdots[dotsize=0.24,fillstyle=solid,dotstyle=o](4.1903124,1.5834374)
\psdots[dotsize=0.24,fillstyle=solid,dotstyle=o](3.7303126,4.8034377)
\psdots[dotsize=0.24](3.7303126,3.8034375)
\psline[linewidth=0.02cm](3.7103126,4.6834373)(3.7303126,3.9234376)
\psline[linewidth=0.02cm](4.1503124,4.7234373)(3.7703125,3.8234375)
\psdots[dotsize=0.24,fillstyle=solid,dotstyle=o](4.1903124,4.8034377)
\psline[linewidth=0.02cm](3.2303126,4.7234373)(3.7303126,3.8034375)
\psdots[dotsize=0.24,fillstyle=solid,dotstyle=o](3.1903124,4.7834377)
\psdots[dotsize=0.24,fillstyle=solid,dotstyle=o](3.7503126,-3.0565624)
\psdots[dotsize=0.24](3.7503126,-4.0565624)
\psline[linewidth=0.02cm](3.7303126,-3.1765625)(3.7503126,-3.9365625)
\psbezier[linewidth=0.02](3.7319171,-2.0681913)(3.2503126,-2.1565626)(3.0459187,-2.4965625)(3.0703125,-3.1565626)(3.0947063,-3.8165624)(3.4303124,-3.8965626)(3.8103125,-4.105158)
\psdots[dotsize=0.24,fillstyle=solid,dotstyle=o](3.7503126,-2.0565624)
\psbezier[linewidth=0.02](3.6503124,-1.0965625)(3.2703125,-1.3765625)(2.6503124,-1.6365625)(2.6703124,-2.7965624)(2.6903124,-3.9565625)(3.4103124,-3.9965625)(3.6903124,-4.0365624)
\psdots[dotsize=0.24,fillstyle=solid,dotstyle=o](3.7503126,-1.0365624)
\psdots[dotsize=0.24,fillstyle=solid,dotstyle=o](7.2103124,4.8634377)
\psdots[dotsize=0.24](7.3303127,3.8634374)
\psline[linewidth=0.02cm](7.2303123,4.7434373)(7.3303127,3.9834375)
\psline[linewidth=0.02cm](7.8903127,4.7434373)(7.3703127,3.8234375)
\psline[linewidth=0.02cm](6.8303127,4.7834377)(7.3303127,3.8634374)
\psdots[dotsize=0.24,fillstyle=solid,dotstyle=o](6.7903123,4.8434377)
\psline[linewidth=0.02cm](6.3703127,4.7434373)(7.2503123,3.8434374)
\psdots[dotsize=0.24,fillstyle=solid,dotstyle=o](6.3703127,4.8034377)
\psdots[dotsize=0.24,fillstyle=solid,dotstyle=o](7.9703126,4.8434377)
\psline[linewidth=0.02cm](8.290313,4.7234373)(7.3903127,3.8234375)
\psdots[dotsize=0.24,fillstyle=solid,dotstyle=o](8.350312,4.8034377)
\psline[linewidth=0.02cm](7.5503125,4.7834377)(7.3503127,3.8834374)
\psdots[dotsize=0.24,fillstyle=solid,dotstyle=o](7.5703125,4.8634377)
\psline[linewidth=0.02cm](8.510312,0.8234375)(7.3703127,0.5634375)
\psdots[dotsize=0.24,fillstyle=solid,dotstyle=o](8.630313,0.8834375)
\psline[linewidth=0.02cm](8.510312,0.6034375)(7.4503126,0.5434375)
\psdots[dotsize=0.24,fillstyle=solid,dotstyle=o](8.650312,0.5834375)
\psdots[dotsize=0.24](7.3303127,-3.9365625)
\psline[linewidth=0.02cm](6.9103127,-3.0365624)(7.2903123,-3.8565626)
\psbezier[linewidth=0.02](6.391917,-2.1081913)(6.4303126,-2.4765625)(6.5903125,-2.7565625)(6.7503123,-3.1765625)(6.9103127,-3.5965624)(7.0303125,-3.7565625)(7.4503126,-3.9565625)
\psbezier[linewidth=0.02](5.9103127,-1.0765625)(5.5303125,-1.3565625)(4.7503123,-1.5965625)(4.9303126,-2.7765625)(5.1103125,-3.9565625)(7.0103126,-3.8965626)(7.2503123,-3.9365625)
\psline[linewidth=0.02cm](7.9503126,-3.0365624)(7.3303127,-3.9365625)
\psdots[dotsize=0.24,fillstyle=solid,dotstyle=o](6.8303127,-2.9365625)
\psdots[dotsize=0.24,fillstyle=solid,dotstyle=o](7.9103127,-2.0365624)
\psdots[dotsize=0.24,fillstyle=solid,dotstyle=o](8.530313,-2.0365624)
\psdots[dotsize=0.24,fillstyle=solid,dotstyle=o](8.050312,-2.9365625)
\psdots[dotsize=0.24,fillstyle=solid,dotstyle=o](6.4303126,-2.0165625)
\psbezier[linewidth=0.02](7.111917,-2.1081913)(7.2503123,-2.4765625)(7.2703123,-2.6965625)(7.3103123,-3.0965624)(7.3503127,-3.4965625)(7.3303127,-3.6565626)(7.3303127,-3.9765625)
\psdots[dotsize=0.24,fillstyle=solid,dotstyle=o](7.0703125,-2.0165625)
\psbezier[linewidth=0.02](7.3303127,-3.9765625)(7.3360205,-3.5634134)(7.4303126,-3.3365624)(7.5103126,-2.9965625)(7.5903125,-2.6565626)(7.7103124,-2.3565626)(7.8379846,-2.1161916)
\psbezier[linewidth=0.02](7.367453,-3.9034812)(7.6103125,-3.7965624)(7.8748093,-3.5405412)(8.110312,-3.1965625)(8.345816,-2.852584)(8.510312,-2.5765624)(8.556061,-2.1360288)
\psbezier[linewidth=0.02](6.3303127,-1.0765625)(5.9503126,-1.3565625)(5.1703124,-1.5965625)(5.3503127,-2.7765625)(5.5303125,-3.9565625)(7.0903125,-3.8365624)(7.4103127,-3.9165626)
\psdots[dotsize=0.24,fillstyle=solid,dotstyle=o](6.3903127,-1.0365624)
\psdots[dotsize=0.24,fillstyle=solid,dotstyle=o](5.9303126,-1.0365624)
\psbezier[linewidth=0.02](6.7503123,-1.0565625)(6.3703127,-1.3365625)(5.5903125,-1.5765625)(5.7703123,-2.7565625)(5.9503126,-3.9365625)(7.1703124,-3.8365624)(7.4503126,-3.8765626)
\psdots[dotsize=0.24,fillstyle=solid,dotstyle=o](6.8503127,-1.0365624)
\psbezier[linewidth=0.02](7.1703124,-1.0765625)(6.7503123,-1.3365625)(6.0103126,-1.5365624)(6.1903124,-2.7165625)(6.3703127,-3.8965626)(7.1303124,-3.8165624)(7.4303126,-3.8365624)
\psdots[dotsize=0.24,fillstyle=solid,dotstyle=o](7.2703123,-1.0365624)
\psbezier[linewidth=0.02](9.110312,-1.1165625)(9.5703125,-1.1965625)(10.290313,-1.8565625)(9.890312,-2.8565626)(9.490313,-3.8565626)(7.5856595,-3.9065626)(7.3503127,-3.9365625)
\psdots[dotsize=0.24,fillstyle=solid,dotstyle=o](9.090313,-1.0365624)
\psbezier[linewidth=0.02](8.605006,-1.0965625)(9.065414,-1.0765625)(9.890312,-1.8365625)(9.5066395,-2.8365624)(9.122966,-3.8365624)(7.296054,-3.8865626)(7.0703125,-3.9165626)
\psdots[dotsize=0.24,fillstyle=solid,dotstyle=o](8.670313,-1.0365624)
\psbezier[linewidth=0.02](8.290313,-1.0962808)(8.750313,-1.1565624)(9.630313,-1.8258582)(9.230312,-2.8117738)(8.830313,-3.7976892)(7.4856596,-3.846985)(7.3103123,-3.8765626)
\psdots[dotsize=0.24,fillstyle=solid,dotstyle=o](8.210313,-1.0365624)
\psbezier[linewidth=0.02](7.8103123,-1.0765625)(8.290313,-1.1565624)(9.130313,-1.8165625)(8.790313,-2.7917738)(8.450313,-3.766985)(7.4656596,-3.866985)(7.2303123,-3.8965626)
\psdots[dotsize=0.24,fillstyle=solid,dotstyle=o](7.7503123,-1.0365624)
\usefont{T1}{ptm}{m}{n}
\rput(12,4.1734376){$K_{1,3k}$}
\usefont{T1}{ptm}{m}{n}
\rput(12.1,1.4734375){$K_{1,k^2+2k}$}
\usefont{T1}{ptm}{m}{n}
\rput(12.2,-2.2865624){$K_{1,k^3+k^2+k}$}
\end{pspicture}
}

\end{center}
\caption{Starting with the star $K_{1,3}$ in one of its three embeddings as a spanning subgraph of the closure of a rooted tree (encoded in the left-most coloured rooted trees), branching $k$-fold at every non-root vertex yields one of the three polynomial sequences $(K_{1,3k})$, $(K_{1,k^2+k})$, $(K_{k^3+k^2+k})$. }\label{fig:branchingK13}
\end{figure}

\begin{figure}
\begin{center}
\scalebox{0.9} 
{
\begin{pspicture}(0,-3.35)(14.758437,3.35)
\definecolor{color8144b}{rgb}{0.8,0.8,0.8}
\psellipse[linewidth=0.02,dimen=outer,fillstyle=solid,fillcolor=black](1.2,-2.44)(0.2,0.2)
\psellipse[linewidth=0.02,dimen=outer,fillstyle=solid,fillcolor=black](1.2,-0.95)(0.2,0.2)
\psellipse[linewidth=0.02,dimen=outer,fillstyle=solid,fillcolor=black](1.2,0.46)(0.2,0.2)
\psellipse[linewidth=0.02,dimen=outer,fillstyle=solid,fillcolor=black](1.2,1.82)(0.2,0.2)
\usefont{T1}{ptm}{m}{n}
\rput(1.3,2.25){$(\emptyset, K_k)$}
\usefont{T1}{ptm}{m}{n}
\rput(1.3,0.9){$(\emptyset, K_k^\ell)$}
\psline[linewidth=0.02cm](1.4,-0.95)(4,-0.95)
\usefont{T1}{ptm}{m}{n}
\rput(1.5,-0.5){$(\emptyset,K_{k-j}^1)$}
\usefont{T1}{ptm}{m}{n}
\rput(4.3,-0.5){$(\{0\},K_{j}^\ell)$}
\psellipse[linewidth=0.02,dimen=outer](4.2,-0.95)(0.2,0.2)
\psline[linewidth=0.02cm](1.4,-2.44)(4,-2.44)
\usefont{T1}{ptm}{m}{n}
\rput(1.3,-1.95){$(\emptyset,K_1^1)$}
\usefont{T1}{ptm}{m}{n}
\rput(4.3,-1.95){$(\{0\},K_j^1)$}
\psellipse[linewidth=0.02,dimen=outer](4.2,-2.44)(0.2,0.2)
\usefont{T1}{ptm}{m}{n}
\rput(8.5,0.41){Potts partition function}
\usefont{T1}{ptm}{m}{n}
\rput(8.5,1.89){Chromatic polynomial}
\usefont{T1}{ptm}{m}{n}
\rput(9.9,-0.95){Averbouch--Godlin--Makowsky polynomial}
\usefont{T1}{ptm}{m}{n}
\rput(10,-2.37){Tittmann--Averbouch--Makowsky polynomial}
\usefont{T1}{ptm}{m}{n}
\rput(9.615156,-1.37){($\ell=0$ is Dohmen-Ponitz--Tittmann)}
\psline[linewidth=0.02cm](6.46,3.34)(6.46,-3.34)
\psline[linewidth=0.02cm](0.0,2.56)(13.76,2.56)
\usefont{T1}{ptm}{m}{n}
\rput(2.7084374,3.07){Coloured rooted tree}
\usefont{T1}{ptm}{m}{n}
\rput(10.8,3.05){Polynomial sequence upon branching and composition}
\usefont{T1}{ptm}{m}{n}
\rput(3.8,-2.7){$k$}
\usefont{T1}{ptm}{m}{n}
\rput(3.8,-1.2){$1$}
\end{pspicture}
}
\end{center}
\caption{Examples of coloured rooted trees (root black vertex) and the polynomial sequences resulting from them. Colour pairs $(A_s,F_s)$ consist of subgraph encoding $A_s$ and ornament $F_s$. These coloured rooted trees are augmented by branching numbers (in the last pair only, indicated separately at non-root vertices) and then branching performed as described in Definition~\ref{def:augment}, followed by composition of the resulting ornamented graph.   
} \label{fig:ornamented_trees}
\end{figure}

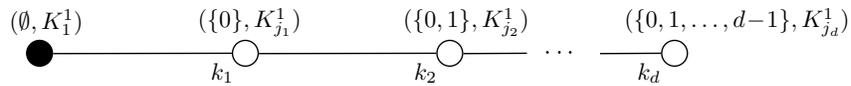
\begin{figure}
\begin{center}
\scalebox{0.9} 
{
\begin{pspicture}(1,-0.5)(11,0.545)
\definecolor{color8693b}{rgb}{0.8,0.8,0.8}
\psdots[dotsize=0.04](8.0,-0.045)
\psdots[dotsize=0.04](8.18,-0.045)
\psdots[dotsize=0.04](8.34,-0.045)
\psellipse[linewidth=0.02,dimen=outer,fillstyle=solid,fillcolor=black](0.5,-0.06)(0.2,0.2)
\psline[linewidth=0.02cm](0.7,-0.065)(3.34,-0.06)
\usefont{T1}{ptm}{m}{n}
\rput(0.6,0.4){$(\emptyset,K_1^1)$}
\usefont{T1}{ptm}{m}{n}
\rput(3.6,0.4){$(\{0\},K_{j_1}^1)$}
\psellipse[linewidth=0.02,dimen=outer](3.54,-0.06)(0.2,0.2)
\usefont{T1}{ptm}{m}{n}
\rput(3.2,-0.35){$k_1$}
\psline[linewidth=0.02cm](3.74,-0.065)(6.36,-0.06)
\usefont{T1}{ptm}{m}{n}
\rput(6.8,0.4){$(\{0,1\},K_{j_2}^1)$}
\psellipse[linewidth=0.02,dimen=outer](6.56,-0.06)(0.2,0.2)
\usefont{T1}{ptm}{m}{n}
\rput(6.2,-0.35){$k_2$}
\psline[linewidth=0.02cm](6.76,-0.065)(7.7,-0.06)
\psline[linewidth=0.02cm](8.78,-0.065)(9.68,-0.06)
\usefont{T1}{ptm}{m}{n}
\rput(10.8,0.4){$(\{0,1,\ldots, d\!-\!1\},K_{j_d}^1)$}
\psellipse[linewidth=0.02,dimen=outer](9.88,-0.06)(0.2,0.2)
\usefont{T1}{ptm}{m}{n}
\rput(9.5,-0.35){$k_d$}
\end{pspicture}
}
\caption{Coloured rooted path of Example~\ref{ex:path}. Colour pairs $(A_s,F_s)$ consist of subgraph encoding $A_s$ and ornament $F_s$ (here a complete graph with a single loop on every vertex). Colours are augmented by branching numbers $k_1,\ldots, k_d$ (root has branching number $1$, not shown). This can be regarded as one way to generalize the Tittmann--Averbouch--Makowsky polynomial (the case $d=1$). }\label{fig:path}
\end{center}
\end{figure}


\begin{thm}\label{thm:poly_branching}
Let $T$ be a rooted tree,  $H$ a spanning subgraph of ${\rm clos}(T)$ and $\mathbf{k}=(k_s:s\in V(T))\in \mathbb{N}^{|V(T)|}$. Further let $\{F_{s;\mathbf{j}_s}:s\in V(T)\}$ be ornaments on the vertices of $T$ such that for each $s\in V(T)$ the sequence $(F_{s;\mathbf{j}_s})$ is strongly polynomial in $\mathbf{j}_s$. 

Then the sequence $(\,T^{\mathbf{k}}(H)[\{F_{s;\mathbf{j}_s}:s\in V(T)\}]\,)$ of branched compositions is strongly polynomial in $(\mathbf{j},\mathbf{k})$, where $\mathbf{j}=(\mathbf{j}_s:s\in V(T))$. 
\end{thm}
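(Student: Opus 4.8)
The plan is to evaluate $\mathrm{hom}(G,\,T^{\mathbf{k}}(H)[\{F_{s;\mathbf{j}_s}\}])$ by the composition formula~\eqref{eq:composition} and then to reorganize the sum so that the dependence on $\mathbf{j}$ and on $\mathbf{k}$ are separated. Write $\tau\colon V(T^{\mathbf{k}}(H))\to V(T)$ for the \emph{type} map taking each copy of a vertex $s$ of $T$ back to $s$; since each such copy carries the ornament $F_{s;\mathbf{j}_s}$, formula~\eqref{eq:composition} reads
\[
\mathrm{hom}\bigl(G,\,T^{\mathbf{k}}(H)[\{F_{s;\mathbf{j}_s}\}]\bigr)=\sum_{\substack{f\colon G\to T^{\mathbf{k}}(H)[\{K_1^1\}]\\\text{homomorphism}}}\ \prod_{v\in V(T^{\mathbf{k}}(H))}\mathrm{hom}\bigl(G[f^{-1}(v)],\,F_{\tau(v);\mathbf{j}_{\tau(v)}}\bigr).
\]
First I would record a structural fact: branching preserves both the type and the level of a vertex, and (using Lemma~\ref{lem:branching_order} so that $T^{\mathbf{k}}$ is well defined) a copy of $s$ is an ancestor of a copy of $s'$ in $T^{\mathbf{k}}$ only when $s$ is an ancestor of $s'$ in $T$. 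Since every edge of $T^{\mathbf{k}}(H)$ joins a copy of some $s$ to a copy of one of its ancestors or descendants, two distinct copies of the same vertex of $T$ are non-adjacent in $T^{\mathbf{k}}(H)$.

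Next, for a homomorphism $f$ occurring in the sum I would put $\psi=\tau\circ f\colon V(G)\to V(T)$. If $uu'\in E(G)$ then $f(u),f(u')$ are equal or adjacent in $T^{\mathbf{k}}(H)$, so by the structural fact either $\psi(u)=\psi(u')$ or $\psi(u)\psi(u')\in E(H)$; that is, $\psi$ is a homomorphism from $G$ to the graph $H^{\circ}$ obtained from $H$ by adding a loop at every vertex. The same fact forces every edge of $G$ lying inside $\psi^{-1}(s)$ to lie inside a single fibre $f^{-1}(v)$, so $G[\psi^{-1}(s)]=\bigsqcup_{v\in\tau^{-1}(s)}G[f^{-1}(v)]$; since $\mathrm{hom}(\cdot,-)$ is multiplicative over disjoint unions, the product over $v$ above collapses to $\prod_{s\in V(T)}\mathrm{hom}(G[\psi^{-1}(s)],F_{s;\mathbf{j}_s})$, which depends on $f$ only through $\psi$. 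As each $(F_{s;\mathbf{j}_s})$ is strongly polynomial and $G[\psi^{-1}(s)]$ is a fixed finite graph, this product is a polynomial $P_\psi(\mathbf{j})$. Grouping the sum by $\psi$ then gives
\[
\mathrm{hom}\bigl(G,\,T^{\mathbf{k}}(H)[\{F_{s;\mathbf{j}_s}\}]\bigr)=\sum_{\psi\colon V(G)\to V(T)}N(\psi,\mathbf{k})\,P_\psi(\mathbf{j}),
\]
a finite sum, where $N(\psi,\mathbf{k})$ is the number of homomorphisms $f\colon G\to T^{\mathbf{k}}(H)[\{K_1^1\}]$ with $\tau\circ f=\psi$.

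So everything reduces to showing that $N(\psi,\mathbf{k})$ is a polynomial in $\mathbf{k}$, and the claim I would prove is that $N(\psi,\mathbf{k})=0$ unless $\psi$ is a homomorphism $G\to H^{\circ}$, in which case $N(\psi,\mathbf{k})=\prod_{t\in V(T)}k_t^{\,c(G[\psi^{-1}(B(t))])}$, where $B(t)$ is the subtree of $T$ rooted at $t$ and $c(\cdot)$ counts connected components. A copy of $s$ in $T^{\mathbf{k}}$ is obtained by choosing, for each $t$ on the path $P(s)$, one of the $k_t$ isomorphic branches created when branching at $t$. In a lift $f$ of $\psi$, any edge of $G$ with both endpoints in $\psi^{-1}(B(t))$ has its two endpoints placed into one and the same branch at one and the same copy of $p(t)$, so the branch used at level $t$ is constant on each connected component of $G[\psi^{-1}(B(t))]$; conversely, processing $T$ from the root downwards, $f$ is recovered by making, for each such component, an unconstrained choice among $k_t$ branches, the hypothesis that $\psi$ maps into $H^{\circ}$ being exactly what guarantees that these choices are never over‑determined and that the resulting $f$ respects the adjacencies encoded by the sets $A_s$. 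Counting the choices gives the stated monomial, and substituting it back exhibits $\mathrm{hom}(G,\,T^{\mathbf{k}}(H)[\{F_{s;\mathbf{j}_s}\}])$ as a single polynomial in $(\mathbf{j},\mathbf{k})$ valid for all parameter values, which is the assertion.

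I expect the last step to be the main obstacle: one must check that the branch choices at the various vertices of $T$ are jointly free and in bijection with the lifts $f$, so that the count collapses to the clean monomial above rather than to something more complicated, and this is where the fine combinatorics of the branching operation (and of $\mathrm{clos}(T^{\mathbf{k}})$) genuinely enters. If one prefers to bypass the explicit formula, the same conclusion can be reached by induction on $|V(T)|$, deleting a leaf $\ell$ of $T$, observing that the corresponding last round of branching multiplies the count by $k_\ell^{\,c(G[\psi^{-1}(\ell)])}$, and appealing to the inductive hypothesis for $T-\ell$.
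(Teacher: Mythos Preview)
Your argument is correct, and it is genuinely different from the route taken in the paper. The paper proves the unornamented case first by an orbit-counting argument: it exhibits a subgroup $\Sigma\cong\prod_s\mathrm{Sym}(k_s)$ of $\mathrm{Aut}(T^{\mathbf{k}}(H))$, shows that any connected subgraph $S$ with $|V(S)|\le|V(G)|$ can be moved by $\Sigma$ into a copy lying in $T^{\mathbf{m}}(H)$ for $\mathbf{m}$ bounded independently of $\mathbf{k}$, and then argues that each orbit of subgraphs of type $S$ has size a polynomial in $\mathbf{k}$ (a multiple of $\binom{\mathbf{k}}{\mathbf{m}}$ and a divisor of $\mathbf{k}!/(\mathbf{k}-\mathbf{m})!$, so also vanishing for small $\mathbf{k}$). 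The ornamented case is then handled as an addendum.

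Your approach instead encodes a vertex of $T^{\mathbf{k}}$ as a pair $(s;(b_t)_{t\in P(s)})$ with $b_t\in[k_t]$, and reads off directly that the lifting constraints on $f$ over a fixed $\psi$ decouple level by level into ``$b_t$ constant on components of $G[\psi^{-1}(B(t))]$'', giving the explicit monomial $\prod_t k_t^{c(G[\psi^{-1}(B(t))])}$. This is more elementary (no asymmetry reduction, no orbit-stabilizer), treats the ornamented case in one stroke, and yields as a by-product the closed state-sum
\[
\mathrm{hom}\bigl(G,\,T^{\mathbf{k}}(H)[\{F_{s;\mathbf{j}_s}\}]\bigr)=\sum_{\substack{\psi:G\to H^{\circ}\\\text{hom.}}}\ \prod_{t\in V(T)}k_t^{\,c(G[\psi^{-1}(B(t))])}\ \prod_{s\in V(T)}\mathrm{hom}\bigl(G[\psi^{-1}(s)],F_{s;\mathbf{j}_s}\bigr),
\]
of which Example~\ref{ex:path} is the special case $H=\mathrm{clos}(T)$ with $T$ a path. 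The paper's orbit method, on the other hand, is the one that transfers to settings (such as Proposition~\ref{prop:hypercubes}) where no such clean coordinate description of the target is available. Your caveat about the ``main obstacle'' is warranted only insofar as the bijection needs spelling out; once vertices of $T^{\mathbf{k}}$ are written in branch coordinates as above, the freeness and joint independence of the choices at different $t$ are immediate, so there is no real gap.
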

\begin{proof}
We first prove the unornamented version of the theorem, i.e., that the sequence $(T^{\mathbf{k}}(H))$ is strongly polynomial
in $\mathbf{k}$, and after this, we indicate how to extend it to the ornamented case.

It will be convenient for the proof to assume that the coloured rooted tree $T$ encoding the graph $H$ (by subset $A_s$ on $s\in V(T)$ encoding adjacencies of $H$ to predecessors of $s$ in $T$) is asymmetric, i.e., there are no two isomorphic subtrees $B(s_1)$ and $B(s_2)$ with roots $s_1\neq s_2$ having $p(s_1)=p(s_2)$.
For if it were the case that for each $s\in V(T)$ the coloured rooted tree $T$ has $l_s$ isomorphic copies of $B(s)$ pendant from $p(s)$, then there is an asymmetric coloured rooted tree $T_0$ which is an induced subgraph of $T$ and such that $T=T_0^{\mathbf{l}}$, where $\mathbf{l}=(l_s:s\in V(T_0))$ has $l_s$ equal to the number of isomorphic copies of $B(s)$ in $T$ pendant from $p(s)$. (The coloured rooted tree $T_0$ here is in fact the {\em branching core} of $T$, to be defined in Section~\ref{sec:bushiness} below.) Once we have the result for asymmetric $T_0$, it then follows that the sequence  $(T^{\mathbf{k}}(H))$ is strongly polynomial in variables $(k_sl_s:s\in V(T))$ since $(T_0^{\mathbf{k}}(H))$ is strongly polynomial in $(k_s:s\in V(T))$. As $\mathbf{l}$ is constant, a polynomial in $(k_sl_s:s\in V(T))$ is a polynomial in $(k_s:s\in V(T))$.

Start then with asymmetric coloured rooted tree $T$ (colours encoding $H$ as a subgraph of ${\rm clos}(T)$) and the fact that
$${\rm hom}(G, T^{\mathbf{k}}(H))=\sum_{\stackrel{S\subseteq T^{\mathbf{k}}(H)}{|V(S)|\leq |V(G)|}}{\rm sur}(G, S),$$
where ${\rm sur}(G, S)$ denotes the number of vertex- and edge-surjective homomorphisms from $G$ to the subgraph $S$ of $T^{\mathbf{k}}(H)$.
It is enough to prove the theorem for connected $G$, so we may assume that the induced subgraph $S$ is connected. 
By assumption $S$ has at most $|V(G)|$ vertices and has maximum degree $\Delta(S)\leq \Delta(G)$, which is independent of $\mathbf{k}$.

For each subtree $B(s)$ rooted at $s$ in the coloured rooted tree $T$, there are $k_s$ isomorphic copies of $B(s)$ in $T^{\mathbf{k}}(H)$ (exactly $k_s$ under the assumption that $T$ is asymmetric); consequently the automorphism group of $T^{\mathbf{k}}(H)$ contains $k_s!$ elements arising from permutations of the $k_s$ copies of $B(s)$ in $T^{\mathbf{k}}$. Let $\Sigma$ be the wreath product of these permutation groups for each $s$; then $\Sigma\leq{\rm Aut}(T^{\mathbf{k}}(H))$ and $|\Sigma|=\mathbf{k}!=\prod_{s\in V(T)}k_s!$.
It now suffices to observe that any given copy of $S$ in $T^{\mathbf{k}}(H)$ as a subgraph can be moved by an automorphism in $\Sigma$ to an isomorphic copy $S'$ in the induced subgraph $T^{\mathbf{m}}(H)$, where $\mathbf{m}=\mathbf{m}(S)=(m_s:s\in V(T))$ 
 depends on $S$ but not on $\mathbf{k}$.  This is because, as a connected graph, $S$ has vertices in at most $\Delta(S)$ copies of the subtree $B(s)$ of $T$ contained in $T^{\mathbf{k}}$, 
so we can take $m_s\leq \Delta(S)$. Hence the number of orbits of a copy of $S$ under $\Sigma$ acting on $T^{\mathbf{k}}(H)$ is independent of $\mathbf{k}$ when $k_s\geq\Delta(S)$ for each $s\in V(T)$

 For $k_s\geq m_s$, the stabilizer of $S'$ under the action of $\Sigma$ contains all permutations of $k_s-m_s$ copies of subtree $B(s)$ not containing any vertices of $S'$, and so has size a multiple of $(\mathbf{k}-\mathbf{m})!=\prod_s(k_s-m_s)!$.
Thus each orbit under $\Sigma$ of a subgraph isomorphic to $S$ has size a divisor of $\mathbf{k}!/(\mathbf{k}-\mathbf{m})!$, which is polynomial in $\mathbf{k}$.
On the other hand, an automorphism in $\Sigma$ that stabilizes $S'$ cannot move one of the $m_s$ branches containing vertices of $S$ to one of the $k_s-m_s$ branches that do not contain a vertex of $S'$, but only these $m_s$ branches among themselves; hence the stabilizer of $S'$ has size a divisor of $(\mathbf{k}-\mathbf{m})!\mathbf{m}!$.

We have just seen that the size of the orbit of $S$ under the action of $\Sigma$ is a multiple of $\frac{\mathbf{k}!}{(\mathbf{k}-\mathbf{m})!\mathbf{m}!}$ and a divisor of $\frac{\mathbf{k}!}{(\mathbf{k}-\mathbf{m})!}$. Let $q(S;\mathbf{k})$ denote the number of isomorphic copies of $S$ occurring as a subgraph of $T^{\mathbf{k}}(H)$. Then, when $k_s\geq m_s$ for each $s\in V(T)$, $q(S;\mathbf{k})$ is a sum of polynomials in $\mathbf{k}$, one for each orbit under $\Sigma$ of subgraphs isomorphic to $S$, and each of which is a multiple of $\frac{\mathbf{k}!}{(\mathbf{k}-\mathbf{m})!\mathbf{m}!}$ and divisor of $\frac{\mathbf{k}!}{(\mathbf{k}-\mathbf{m})!}$.
In particular, $q(S;\mathbf{k})=0$ when $k_s<m_s$ for some $s\in V(T)$. But this means it is also true that $q(S;\mathbf{k})$ is the number of copies of $S$ in $T^{\mathbf{k}}(H)$ when $k_s<m_s$ for some $s\in V(T)$.
Here $m_s$ is the number of copies of subtree $B(s)$ in coloured rooted tree $T^{\mathbf{k}}$ encoding the graph $T^{\mathbf{k}}(H)$ that contain some vertex of $S$. 
If $k_s<m_s$ for some $s\in V(T)$ then $S$ does not occur as a subgraph of $T^{\mathbf{k}}(H)$ (here we use the fact that the coloured rooted tree $T$ is asymmetric). 
Hence, for all $\mathbf{k}\in \mathbb{N}^{V(T)}$, we have
$${\rm hom}(G,T^{\mathbf{k}}(H))=\sum_S{\rm sur}(G,S)q(S;\mathbf{k}),$$
where the sum is over all isomorphism types $S$ of homomorphic images of $G$, ${\rm sur}(G,S)$ is the number of vertex- and edge-surjective homorphisms from $G$ onto $S$ (independent of $\mathbf{k}$), and where $q(G;\mathbf{k})$ is equal to the number of isomorphic copies of $S$ as a subgraph of $T^{\mathbf{k}}(H)$, which we have seen is a polynomial in $\mathbf{k}$ (of degree at most $\Delta(G)$ in each $k_s$, $s\in V(T)$).

This completes the proof that $(T^{\mathbf{k}}(H))$ is a strongly polynomial sequence.

For the ornamented case, to show that ${\rm hom}(G,T^{\mathbf{k}}(H)[\{F_{s;\mathbf{j}_s}:s\in V(T)\}]\,)$ is a polynomial in $(\mathbf{j},\mathbf{k})$ the argument follows the same lines, only in order to count the number of surjective homomorphisms from $G$ to a subgraph $S$ of $T^{\mathbf{k}}(H)[\{F_{s;\mathbf{j}_s}:s\in V(T)\}]$ we first
count the number of subgraphs $S'$ of $T^{\mathbf{k}}(H)$ on vertex set $\{s\in V(T^{\mathbf{k}}):V(S)\cap V(F_{s;\mathbf{k}})\neq\emptyset\}$ (where we recall that ornament $F_{s;\mathbf{j}_s}$ on vertex $s\in V(T)$ is propagated to all copies of $s$ in $T^{\mathbf{k}}(H)$) and second count the number of isomorphic copies of $S\cap F_{s;\mathbf{j}_s}$ in $F_{s;\mathbf{j}_s}$.

 Since the branching operation produces isomorphic copies of the ornamented subtrees $B(s)$ rooted at $s\in V(T)$, the action of $\Sigma$ as an automorphism of $T^{\mathbf{k}}(H)$ also gives an automorphism of $T^{\mathbf{k}}(H)[\{F_{s;\mathbf{j}_s}:s\in V(T)\}]$, and the same counting argument yields the conclusion that there are polynomial in $\mathbf{k}$ of them. By hypothesis, the number of homomorphic copies of $S\cap F_{s;\mathbf{j}_s}$ in $F_{s;\mathbf{j}_s}$ is a polynomial in $\mathbf{j}_s$; by inclusion-exclusion the same is true of the number of isomorphic copies.
Combined together, this yields the desired conclusion that, for given $G$, ${\rm sur}(G,T^{\mathbf{k}}(H)[\{F_{s;\mathbf{j}_s}:s\in V(T)\}]\,)$ is a polynomial in $\mathbf{k}$ and $\mathbf{j}=(\mathbf{j}_s:s\in V(T))$, 
 and hence the same is true of the number of homomorphisms from $G$ to $T^{\mathbf{k}}(H)[\{F_{s;\mathbf{j}_s}:s\in V(T)\}]$.
\end{proof}




\begin{remark} The variables $\mathbf{j_s}$ and $\mathbf{k}$ in Theorem~\ref{thm:poly_branching} need not be independent. Indeed, we may have $\mathbf{j}_s=\mathbf{k}$ for each $s\in V(T)$. In such cases strictly speaking we have a strongly polynomial {\em subsequence} of graphs in the variables $(\mathbf{j},\mathbf{k})$ rather than a strongly polynomial sequence. However, if there is a vector $\mathbf{l}$ of independent variables ranging over $\mathbb{N}^h$ such that each component variable of $(\mathbf{j}_s:s\in V(T))$ and $\mathbf{k}$ can be expressed as a polynomial in the component variables of $\mathbf{l}$, then by taking $\mathbf{l}$ rather than $(\mathbf{j},\mathbf{k})$ in the conclusion of Theorem~\ref{thm:poly_branching} we do have a strongly polynomial sequence in $\mathbf{l}$.
So for example, in the case $\mathbf{j}_s=\mathbf{k}$ for each $s\in V(T)$ we have $\mathbf{l}=\mathbf{k}$, i.e., the sequence $(T^{\mathbf{k}}[\{F_{s;\mathbf{k}}:s\in V(T)\}])$ is strongly polynomial in $\mathbf{k}$ when the same is true of sequences $(F_{s;\mathbf{k}})$ for each $s\in V(T)$.
\end{remark}

The special case $H={\rm clos}(T)$ of Theorem~\ref{thm:poly_branching} 
allows of a different, slightly more direct proof, which has the virtue of also yielding a ``state sum expansion'' for the polynomial ${\rm hom}(G,T^{\mathbf{k}}(H))$ in this case (see Example~\ref{ex:path} that follows): 

\begin{proposition}\label{thm:strong_poly_branching}
Let $T$ be a rooted tree and $\mathbf{k}=(k_s:s\in V(T))\in \mathbb{N}^{|V(T)|}$. Further let $\{F_{s;\mathbf{j}_s}:s\in V(T)\}$ be ornaments on the vertices of $T$ such that for each $s\in V(T)$ the sequence $(F_{s;\mathbf{j}_s})$ is strongly polynomial in $\mathbf{j}_s$.

Then the sequence $(\,{\rm clos}(T^{\mathbf{k}})[\{F_{s;\mathbf{j}_s}:s\in V(T)\}]\,)$ of branched compositions  is strongly polynomial in $(\mathbf{j},\mathbf{k})$ where $\mathbf{j}=(\mathbf{j}_s:s\in V(T))$.
\end{proposition}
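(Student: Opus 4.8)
The plan is to exploit the recursive structure of the closure of a rooted tree directly, through the elementary behaviour of ${\rm hom}(G,-)$ under graph join, disjoint union, and taking disjoint copies, rather than through the orbit-counting argument used for Theorem~\ref{thm:poly_branching}; in particular no asymmetry hypothesis on $T$ is needed here. Let $r$ be the root of $T$ and $c_1,\ldots,c_p$ its children (the root carries branching multiplicity $1$, branching at $r$ being vacuous), write $B(c_i)$ for the subtree of $T$ rooted at $c_i$, and write $B(c_i)^{\mathbf{k}}$ for its $\mathbf{k}$-branching (with root $c_i$ taken at multiplicity $1$). The first step is to read off from the definitions of closure (Section~\ref{sec:coloured_trees}) and of $\mathbf{k}$-branching (Definition~\ref{def:augment}) the recursive decomposition
\[
{\rm clos}(T^{\mathbf{k}})[\{F_{s;\mathbf{j}_s}\}]\;=\;F_{r;\mathbf{j}_r}\;+\;\bigsqcup_{i=1}^{p}k_{c_i}\cdot\Big({\rm clos}(B(c_i)^{\mathbf{k}})[\{F_{s;\mathbf{j}_s}:s\in V(B(c_i))\}]\Big),
\]
where $+$ denotes graph join, $\bigsqcup$ disjoint union, and $k\cdot X$ means $k$ disjoint copies of $X$. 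The only point to verify is that branching creates no ``extra'' edges: the root $r$ is an ancestor of every vertex of $T^{\mathbf{k}}$, hence joined to all of them in the closure, whereas two vertices lying in distinct pendant copies of the $B(c_i)^{\mathbf{k}}$ have $r$ as their sole common ancestor and so are non-adjacent; the ornaments then propagate through the join in the obvious way.

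The second step records two standard identities: for any graph $G$ with connected components $\{C\}$ and any graphs $A,B,X_i$,
\[
{\rm hom}(G,A+B)=\sum_{U\subseteq V(G)}{\rm hom}(G[U],A)\,{\rm hom}(G[V(G)\setminus U],B),\qquad {\rm hom}\big(G,\textstyle\bigsqcup_i k_i\cdot X_i\big)=\prod_{C}\Big(\textstyle\sum_i k_i\,{\rm hom}(C,X_i)\Big),
\]
the first because an edge of $G$ straddling the join is automatically mapped to an edge, the second because the image of a connected graph lies in a single copy of a single $X_i$ and ${\rm hom}$ is multiplicative over the components of $G$. Now induct on $|V(T)|$. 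For $T=K_1$ the composition is just $F_{r;\mathbf{j}_r}$, and ${\rm hom}(G,F_{r;\mathbf{j}_r})$ is a polynomial in $\mathbf{j}_r$ since $(F_{r;\mathbf{j}_r})$ is strongly polynomial. For the inductive step, apply the decomposition above and the two identities with $A=F_{r;\mathbf{j}_r}$ and $X_i={\rm clos}(B(c_i)^{\mathbf{k}})[\{F_{s;\mathbf{j}_s}\}]$: each ${\rm hom}(G[U],F_{r;\mathbf{j}_r})$ is a polynomial in $\mathbf{j}_r$, and for every component $C$ of $G[V(G)\setminus U]$ the induction hypothesis, applied to the strictly smaller tree $B(c_i)$, makes ${\rm hom}(C,X_i)$ a polynomial in the variables $(k_t,\mathbf{j}_t)_{t\in V(B(c_i))}$. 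As all the sums and products involved are finite, ${\rm hom}(G,{\rm clos}(T^{\mathbf{k}})[\{F_{s;\mathbf{j}_s}\}])$ is a single polynomial in $(\mathbf{j},\mathbf{k})$ valid for all $(\mathbf{j},\mathbf{k})$; that is, the sequence is strongly polynomial.

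Unrolling the recursion expresses ${\rm hom}(G,{\rm clos}(T^{\mathbf{k}})[\{F_{s;\mathbf{j}_s}\}])$ as a sum over homomorphisms $\phi:G\to{\rm clos}(T)[\{K_1^1\}]$ (recording which vertex of $T$ each vertex of $G$ sits above) of a weight that is a product of a polynomial in $\mathbf{k}$, coming from the branching multiplicities, and polynomials in the $\mathbf{j}_s$, coming from the ornament contributions on the fibres of $\phi$; this is the ``state-sum'' form, worked out for coloured rooted paths in Example~\ref{ex:path}. The argument is short; the two places requiring care are the recursive decomposition of the first step (in particular ruling out edges between distinct branches) and checking that the $\mathbf{j}$- and $\mathbf{k}$-dependences really combine into one joint polynomial — which works precisely because each ornament sequence is \emph{strongly} polynomial, so ${\rm hom}(G',F_{s;\mathbf{j}_s})$ is literally a single polynomial in $\mathbf{j}_s$ with no case distinctions (for merely polynomial ornaments one would instead have to track index-set partitions, as in the polynomial case of Proposition~\ref{composition}).
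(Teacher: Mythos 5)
Your proof is correct and takes essentially the same route as the paper: both induct on the rooted tree, peeling off the root (which is joined to every other vertex in the closure) to expose the remaining graph as a disjoint union of the branches' branched compositions, and then combine the join and disjoint-union behavior of $\mathrm{hom}$ with the strong polynomiality of the ornament sequences to close the induction. The only difference is cosmetic: you fix the root at branching multiplicity $1$ (consistent with Definition~\ref{def:branching}, which requires a parent $p(s)$), whereas the paper's base case treats the root as branching $k_r$-fold; either convention yields a single polynomial in $(\mathbf{j},\mathbf{k})$, so the argument is unaffected.
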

\begin{proof}
Although this result is a corollary of  Theorem~\ref{thm:poly_branching}, we give a different proof here by exploiting the recursive structure of closures of rooted trees.

We proceed by induction on the height of $T$. For height $0$ ($T$ just a root vertex $r$, ${\rm clos}(T^{k_r})$ consisting of $k_r$ isolated vertices) we have
${\rm hom}(G,{\rm clos}(T^{k_r})[\{F_r\}])=k_r^{|V(G)|}{\rm hom}(G,F_{r;\mathbf{j}_r})$,
which is strongly polynomial in $(\mathbf{j}_r,k_r)$ by hypothesis on $(F_{r;\mathbf{j}_s})$.

Now take $T$ to be of height $d>0$ and assume the truth of the theorem for trees of height at most $d-1$. Suppose that $T$ has $b$ subtrees $T_1, T_2,\ldots, T_b$ pendant from its root. These subtrees are rooted trees of height at most $d-1$. 

Let ${\rm clos}(T^{\mathbf{k}})\left[\,\{F_{r;\mathbf{j}_r}=K_0\}\cup\{F_{s;\mathbf{j}_s}:s\in V(T)\!-\!r\}\,\right]$ denote the subgraph of\newline ${\rm clos}(T^{\mathbf{k}})\left[\,\{F_{s;\mathbf{j}_s}:s\in V(T)\}\,\right]$ obtained by deleting the root of $T^{\mathbf{k}}$ and its ornament (setting $F_{r;\mathbf{j}_r}=K_0$, the empty graph) before making the composition. This subgraph is the disjoint union of graphs ${\rm clos}(T_i^{\mathbf{k}_i})[\{F_{s;\mathbf{j}_s}:s\in V(T_i)\}]$, $1\leq i\leq b$, where $\mathbf{k}_i=(k_s:s\in V(T_i))$.

For a graph $G=(V,E)$, 
since in ${\rm clos}(T^{\mathbf{k}})$ the root $r$ is connected to every other vertex we then have the decomposition\medskip

{\small{$ {\rm hom}(G,{\rm clos}(T^{\mathbf{k}})\left[\,\{F_{s;\mathbf{j}_s}:s\in V(T)\}\,\right]=$\\
$$\sum_{U\subseteq V}\:k_r^{|V\setminus U|}{\rm hom}(G[V\setminus U],F_{r;\mathbf{j}_r}){\rm hom}\!\left(G[U],{\rm clos}(T^{\mathbf{k}})[\{F_{r;\mathbf{j}_r}\!=\!K_0\}\cup\{F_{s;\mathbf{j}_s}:s\in V(T)\!-\!r\}]\right).$$}}

 By inductive hypothesis the sequence ${\rm clos}(T_i^{\mathbf{k}_i})[\{F_{s;\mathbf{j}_s}:s\in V(T_i)\}])$ is strongly polynomial in $(\mathbf{j}_s:s\in V(T_i))$ and $\mathbf{k}_i$, for each $1\leq i\leq b$. Moreover, ${\rm hom}(G[U],\,\cdot\,)$ is multiplicative over the connected components of $G[U]$, and a connected graph when sent by homomorphism to the disjoint union of  ${\rm clos}(T_i^{\mathbf{k}_i})[\{F_{s;\mathbf{j}_s}:s\in V(T_i)\}]$, $1\leq i\leq b$, must be sent to just one of the graphs ${\rm clos}(T_i^{\mathbf{k}_i})[\{F_{s;\mathbf{j}_s}:s\in V(T_i)\}]$. Thus it follows that
${\rm hom}(\,G[U],{\rm clos}(T^{\mathbf{k}})[\{F_{r;\mathbf{j}_r}\!=\!K_0\}\cup\{F_{s;\mathbf{j}_s}:s\in V(T)\!-\!r\}]\,)$ is polynomial in $\mathbf{k}_1,\ldots, \mathbf{k}_b$. The inductive step now goes through.
\end{proof}




\begin{example}\label{ex:path} Consider $T=P_{d+1}$, the path on $d+1$ vertices rooted at one of its endpoints $r$, for which ${\rm clos}(T)=K_{d+1}$ (in the rooted coloured tree $T$ encoded by subset $\{0,1,\ldots, \ell-1\}$ at vertex $\ell$ away from the root). Let each non-root vertex $s$ be ornamented by $F_s=K_{j_s}^1$, and the root $r$ ornamented by $F_r=K_1^1$. Finally, let each non-root vertex $s\in V(T)$ have branching number $k_s$. (See Figure~\ref{fig:path}.) Here 
$${\rm hom}(G,{\rm clos}(T^{\mathbf{k}})[\{F_s\}]\,)=\sum_{V_0\sqcup V_1\sqcup\cdots\sqcup V_d=V}\;\prod_{1\le\ell\le d}j_\ell^{|V_\ell|}\prod_{1\le\ell\le d}k_{\ell}^{c(G[V-\cup_{0\le i<\ell}V_i])}.$$
The special case when $j_{\ell}=j$ and $k_{\ell}=k$ for $1\le \ell\le d$ gives
\begin{align*}
{\rm hom}(G,{\rm clos}(T^{\mathbf{k}})[K_j^1;K_k^1,\ldots, K_k^1])  & =\sum_{V_0\sqcup V_1\sqcup\cdots\sqcup V_d=V}j^{|V|-|V_0|}k^{\sum_{1\le \ell\le d} c(G[V-\cup_{0\le i< \ell}V_i])}\\ \noalign{\bigskip}
& = \sum_{\emptyset\subseteq W_1\subseteq W_2\subseteq\cdots\subseteq W_d\subseteq V}j^{|W_d|}k^{\sum_{1\le\ell\le d}c(G[W_\ell])}\end{align*}
(In the second summation, $W_{\ell}=V\setminus (V_0\cup  V_1\cup\cdots\cup V_{d-\ell})$.)


\end{example}

\subsection{Branching cores}\label{sec:bushiness}

We have seen in Theorem~\ref{thm:poly_branching} 
 how branching, in conjunction with composition, can  generate a strongly polynomial sequence of graphs from a fixed ornamented graph.  We now turn to the question of when it is possible to index a given countable family of graphs by tuples $\mathbf{k}\in\mathbb{N}^h$ so that the resulting sequence is the union of a finite number of strongly polynomial subsequences in $\mathbf{k}$. In Theorem~\ref{thm:bounded_bushiness} we provide a sufficient condition for this to be true for a family of simple graphs, and a similar condition results for a family of ornamented graphs (Corollary~\ref{cor:ornamented}).

First we need to consider the inverse operation to branching in order to define a new graph parameter that measures how symmetric a simple graph $H$ is with respect to involutive automorphisms with a fixed point. (These automorphisms are those that arise from exchanging isomorphic branches in a rooted tree $T$ that contains $H$ as a subgraph of its closure.)  

Consider then a simple graph $H$ together with a coloured rooted tree $T$ of whose closure $H$ is a spanning subgraph.
Proceeding by the method described in Section 4.2, we construct the coloured rooted tree $T_0=T^{(\mathbf{k})}$ with $\mathbf{k}$ set equal to the all-one tuple. Thus, every vertex $s\in V(T_0)=V(T)$ has assigned a colour with three components: the set $A_s$ (encoding $H$), the ornament $F_s$, and the branching multiplicity $k_s=1$.

Suppose now that there exist vertices $s, s_1, \ldots ,s_{\ell} \in V(T_0)$ with the same predecessor (i.e., $p(s)=p(s_1)=\cdots =p(s_{\ell})$) and such that the subtrees $B(s), B(s_1), \ldots, B(s_{\ell})$ of $T_0$ are isomorphic (including colours -- at this initial stage these colours all have third component corresponding to branching multiplicity equal to 1). Let $T_1$ be the coloured rooted tree obtained from $T_0$ by deleting the subtrees $B(s_1), \ldots, B(s_{\ell})$ and by recolouring vertex $s$ from $(A_s, F_s,1)$ to $(A_s, F_s, \ell+1)$.

By repeating this process of eliminating isomorphic subtrees whose roots have the same predecessor, at an arbitrary stage, we get a coloured rooted tree $T_m$ in which we have to eliminate the isomorphic subtrees with roots, say $t, t_1,\ldots, t_{p}$, having the same predecessor. These subtrees are colour isomorphic, except possibly differing in the branching multiplicities $k_t, k_{t_1}, \ldots, k_{t_{p}}$ of the roots. Then, the next coloured rooted tree $T_{m+1}$ is obtained by deleting the subtrees $B(t_1), \ldots, B(t_{p})$ and by recolouring vertex $t$ from $(A_t, F_t,1)$ to $(A_t, F_t, k)$ where $k=k_t+k_{t_1}+ \ldots +k_{t_{p}}$.

The process concludes
with a coloured rooted tree, say $T'$, in which there are no isomorphic subtrees remaining whose roots share the same predecessor. Moreover,
by similar reasoning to the proof of Lemma~\ref{lem:branching_order}, one can prove that $T'$ is uniquely determined by $T$. Let us call this tree $T'$ the {\em branching core of $T$}.
Note that, reversing the above reduction, by branching $T'$ we obtain $T$ again. Figure 7 illustrates some examples.



\begin{definition}
The {\em branching core size}, ${\rm bc}(T)$, of a coloured rooted tree $T$ is defined by ${\rm bc}(T)=|V(T')|$, where $T'$ is the branching core of $T$, as defined above.
\end{definition}
\begin{remark}
The vertices of the branching core $T'$ of $T$ are in one-to-one correspondence with the orbits of $V(T)$ under the action of  of ${\rm Aut(T)}$: a vertex $s\in V(T')$ corresponds to an orbit of $\prod_{t\in P(s)}k_t$ vertices on the same level in $T$ as that of $s$ in $T'$. 
 Thus, the branching core size of a coloured rooted tree $T$ may alternatively be defined as the number of orbits under the action of ${\rm Aut(T)}$ on $V(T)$.
\end{remark}
Clearly, for a coloured rooted tree $T$ we have ${\rm height}(T)\leq{\rm bc}(T)\leq |V(T)|$, with ${\rm height}(T)={\rm bc}(T)$ when the branching core of $T$ is a path rooted at an endpoint. We have ${\rm bc}(T)=|V(T)|$ when the branching core of $T$ is $T$ itself, which happens when $T$ has no isomorphic subtrees whose roots share the same predecessor ($T$ has no non-trivial automorphisms). 

For a given simple graph $H$, the branching core size of a coloured rooted tree $T$ encoding $H$ as a subgraph of its closure can vary considerably. 
For example, consider the star $K_{1,k}$. On the one hand, as a subgraph of $T={\rm clos}(T)=K_{1,k}$ rooted at its central vertex (subgraph colour component $A_s=\{0\}$ on leaf $s$), the branching core of this coloured rooted tree is $K_2$ (there is branching multiplicity $k$ on the non-root vertex), so that ${\rm bc}(T)=2$. On the other hand, the star $K_{1,k}$ is also a subgraph of the closure of $P_{k+1}$ rooted at an endpoint; in other words, $K_{1,k}$ can be encoded by coloured tree $T=P_{k+1}$ rooted at an endpoint, each non-root vertex $s$ having subgraph colour component $A_s=\{0\}$; here the branching core of $T$ is $T$ itself, so that ${\rm bc}(T)=k+1$.

It will be convenient to denote the graph $H$ ornamented by $F_s$ for each $s\in V(H)$ by $H\langle\{F_s:s\in V(H)\}\rangle$, or more briefly by $H\langle\{F_s\}\rangle$. After composition this becomes the graph $H[\{F_s\}]$.

\begin{definition}
The {\em minimum branching core size}, ${\rm \ul{bc}}(H\langle\{F_s\}\rangle)$ of an ornamented graph $H\langle\{F_s\}\rangle$ is the minimum value of ${\rm bc}(T)$ over all coloured rooted trees $T$ on vertex set $V(H)$, in which vertex $s\in V(T)$ has colour $(A_s,F_s)$, where $A_s=\{|P(s)|\!-\!1\!-\!d(s,t):t\in P(s), st\in E(H)\}$.

The minimum branching core size ${\rm \ul{bc}}(H)$ of a simple graph $H$ is defined as ${\rm \ul{bc}}(H\langle\{K_1\}\rangle)$, where $H$ is ornamented with the same graph $K_1$ on each of its vertices.
\end{definition}

\begin{figure}
\begin{center}
\scalebox{0.75} 
{
\begin{pspicture}(0.8,-6.65625)(17.201876,6.65625)
\psline[linewidth=0.02cm](7.02,5.1521873)(7.4,3.8721876)
\psline[linewidth=0.02cm](14.44,0.7321875)(14.44,-0.5278125)
\psline[linewidth=0.02cm](14.44,-0.6078125)(14.46,-1.8078125)
\psdots[dotsize=0.2](14.46,-1.7678125)
\psdots[dotsize=0.2,fillstyle=solid,dotstyle=o](14.44,-0.5078125)
\psdots[dotsize=0.2,fillstyle=solid,dotstyle=o](14.42,0.8121875)
\usefont{T1}{ptm}{m}{n}
\rput(14.611406,0.5621875){$\mathbf{2}$}
\usefont{T1}{ptm}{m}{n}
\rput(14.591406,-0.8178125){$\mathbf{2}$}
\psline[linewidth=0.02cm](8.16,5.1321874)(7.48,3.8721876)
\psline[linewidth=0.02cm](7.42,3.7921875)(6.68,2.6121874)
\psline[linewidth=0.02cm](5.78,3.9321876)(6.68,2.5921874)
\psline[linewidth=0.02cm](6.24,5.1521873)(5.82,3.8921876)
\psline[linewidth=0.02cm](5.0,5.1721873)(5.74,3.9721875)
\psdots[dotsize=0.2](6.68,2.6121874)
\psdots[dotsize=0.2,fillstyle=solid,dotstyle=o](4.98,5.1921873)
\psdots[dotsize=0.2,fillstyle=solid,dotstyle=o](6.26,5.1921873)
\psdots[dotsize=0.2,fillstyle=solid,dotstyle=o](5.8,3.8921876)
\psdots[dotsize=0.2,fillstyle=solid,dotstyle=o](7.44,3.8721876)
\psdots[dotsize=0.2,fillstyle=solid,dotstyle=o](8.18,5.2121873)
\usefont{T1}{ptm}{m}{n}
\rput(13.131406,4.9421873){$\mathbf{2}$}
\psdots[dotsize=0.2,fillstyle=solid,dotstyle=o](7.0,5.2121873)
\psline[linewidth=0.02cm](15.94,5.1121874)(15.26,3.8521874)
\psline[linewidth=0.02cm](15.22,3.7721875)(14.46,2.5921874)
\psline[linewidth=0.02cm](13.56,3.9121876)(14.46,2.5721874)
\psline[linewidth=0.02cm](12.78,5.1521873)(13.52,3.9521875)
\psdots[dotsize=0.2](14.46,2.5921874)
\psdots[dotsize=0.2,fillstyle=solid,dotstyle=o](12.76,5.1721873)
\psdots[dotsize=0.2,fillstyle=solid,dotstyle=o](13.58,3.8721876)
\psdots[dotsize=0.2,fillstyle=solid,dotstyle=o](15.26,3.8721876)
\psdots[dotsize=0.2,fillstyle=solid,dotstyle=o](15.96,5.1921873)
\usefont{T1}{ptm}{m}{n}
\rput(16.111406,4.9421873){$\mathbf{2}$}
\usefont{T1}{ptm}{m}{n}
\rput(10.011406,0.5621875){$\mathbf{2}$}
\psline[linewidth=0.02cm](11.82,0.8521875)(11.38,-0.4278125)
\psline[linewidth=0.02cm](11.34,-0.5678125)(10.88,-1.7678125)
\psline[linewidth=0.02cm](10.28,-0.5678125)(10.88,-1.7878125)
\psline[linewidth=0.02cm](9.68,0.8121875)(10.24,-0.4878125)
\psdots[dotsize=0.2](10.88,-1.7678125)
\psdots[dotsize=0.2,fillstyle=solid,dotstyle=o](9.64,0.8721875)
\psdots[dotsize=0.2,fillstyle=solid,dotstyle=o](10.24,-0.4678125)
\psdots[dotsize=0.2,fillstyle=solid,dotstyle=o](11.36,-0.4878125)
\psdots[dotsize=0.2,fillstyle=solid,dotstyle=o](11.84,0.8721875)
\usefont{T1}{ptm}{m}{n}
\rput(11.991406,0.5621875){$\mathbf{2}$}
\psline[linewidth=0.02cm](9.04,-3.2078125)(7.62,-4.5278125)
\psline[linewidth=0.02cm](7.18,-3.1878126)(7.56,-4.4678125)
\psline[linewidth=0.02cm](8.06,-3.2478125)(7.64,-4.4678125)
\psline[linewidth=0.02cm](7.6,-4.5678124)(6.84,-5.7278123)
\psline[linewidth=0.02cm](5.94,-4.4078126)(6.84,-5.7478123)
\psline[linewidth=0.02cm](6.4,-3.1878126)(5.98,-4.4478126)
\psline[linewidth=0.02cm](5.16,-3.1678126)(5.9,-4.3678126)
\psdots[dotsize=0.2](6.84,-5.7278123)
\psdots[dotsize=0.2,fillstyle=solid,dotstyle=o](5.14,-3.1478126)
\psdots[dotsize=0.2,fillstyle=solid,dotstyle=o](6.42,-3.1478126)
\psdots[dotsize=0.2,fillstyle=solid,dotstyle=o](5.96,-4.4478126)
\psdots[dotsize=0.2,fillstyle=solid,dotstyle=o](7.6,-4.5278125)
\psdots[dotsize=0.2,fillstyle=solid,dotstyle=o](8.08,-3.1678126)
\psdots[dotsize=0.2,fillstyle=solid,dotstyle=o](7.16,-3.1278124)
\psdots[dotsize=0.2,fillstyle=solid,dotstyle=o](9.1,-3.1678126)
\usefont{T1}{ptm}{m}{n}
\rput(13.271406,-3.5178125){$\mathbf{2}$}
\psline[linewidth=0.02cm](15.96,-3.2678125)(15.28,-4.5278125)
\psline[linewidth=0.02cm](15.26,-4.5878124)(14.48,-5.7878127)
\psline[linewidth=0.02cm](13.58,-4.4678125)(14.48,-5.8078127)
\psline[linewidth=0.02cm](12.8,-3.2278125)(13.54,-4.4278126)
\psdots[dotsize=0.2](14.48,-5.7878127)
\psdots[dotsize=0.2,fillstyle=solid,dotstyle=o](12.78,-3.1878126)
\psdots[dotsize=0.2,fillstyle=solid,dotstyle=o](13.6,-4.5078125)
\psdots[dotsize=0.2,fillstyle=solid,dotstyle=o](15.3,-4.5078125)
\psdots[dotsize=0.2,fillstyle=solid,dotstyle=o](15.98,-3.1878126)
\usefont{T1}{ptm}{m}{n}
\rput(16.151405,-3.4978125){$\mathbf{3}$}
\psline[linewidth=0.02cm](7.06,0.8121875)(7.44,-0.4678125)
\psline[linewidth=0.02cm](8.2,0.7921875)(7.52,-0.4678125)
\psline[linewidth=0.02cm](7.46,-0.5878125)(6.72,-1.7278125)
\psline[linewidth=0.02cm](5.82,-0.4078125)(6.72,-1.7478125)
\psline[linewidth=0.02cm](6.28,0.8121875)(5.86,-0.4478125)
\psline[linewidth=0.02cm](5.04,0.8321875)(5.78,-0.3678125)
\psdots[dotsize=0.2](6.72,-1.7278125)
\psdots[dotsize=0.2,fillstyle=solid,dotstyle=o](5.02,0.9321875)
\psdots[dotsize=0.2,fillstyle=solid,dotstyle=o](6.3,0.8521875)
\psdots[dotsize=0.2,fillstyle=solid,dotstyle=o](5.84,-0.4478125)
\psdots[dotsize=0.2,fillstyle=solid,dotstyle=o](7.48,-0.4878125)
\psdots[dotsize=0.2,fillstyle=solid,dotstyle=o](8.22,0.8721875)
\psdots[dotsize=0.2,fillstyle=solid,dotstyle=o](7.04,0.8721875)
\usefont{T1}{ptm}{m}{n}
\rput(1.808125,6.4521875){\small simple graph $H$}
\usefont{T1}{ptm}{m}{n}
\rput(14.4025,2.1071875){\footnotesize bc$(T)=5$}
\usefont{T1}{ptm}{m}{n}
\rput(14.376875,6.4721875){\small branching core of $T$}
\usefont{T1}{ptm}{m}{n}
\rput(6.540625,6.4721875){\small coloured rooted tree $T$}
\usefont{T1}{ptm}{m}{n}
\rput(10.860156,-2.2128124){\footnotesize (intermediate coloured tree)}
\psbezier[linewidth=0.02](0.28,5.1121874)(0.04,4.6721873)(0.45355412,3.922176)(0.76,3.4721875)(1.0664458,3.0221992)(1.44,2.6921875)(2.04,2.6321876)
\psline[linewidth=0.02cm](2.38,5.1121874)(2.76,3.8321874)
\psline[linewidth=0.02cm](3.52,5.0921874)(2.84,3.8321874)
\psline[linewidth=0.02cm](2.78,3.7521875)(2.04,2.5721874)
\psline[linewidth=0.02cm](1.14,3.8921876)(2.04,2.5521874)
\psline[linewidth=0.02cm](1.6,5.1121874)(1.18,3.8521874)
\psline[linewidth=0.02cm](0.36,5.1321874)(1.1,3.9321876)
\psdots[dotsize=0.2](2.04,2.5721874)
\psdots[dotsize=0.2,fillstyle=solid,dotstyle=o](0.34,5.1521873)
\psdots[dotsize=0.2,fillstyle=solid,dotstyle=o](1.16,3.8521874)
\psdots[dotsize=0.2,fillstyle=solid,dotstyle=o](2.8,3.8321874)
\psdots[dotsize=0.2,fillstyle=solid,dotstyle=o](3.54,5.1721873)
\psdots[dotsize=0.2,fillstyle=solid,dotstyle=o](2.36,5.1721873)
\psbezier[linewidth=0.02](1.68,5.1121874)(1.8231908,4.7796674)(2.0,4.4921875)(2.04,3.8721876)(2.08,3.2521875)(2.14,2.9121876)(2.046316,2.6321876)
\psdots[dotsize=0.2,fillstyle=solid,dotstyle=o](1.62,5.1521873)
\usefont{T1}{ptm}{m}{n}
\rput(4.811406,5.5021877){$\{0,1\}$}
\usefont{T1}{ptm}{m}{n}
\rput(6.1714063,5.5021877){$\{0,1\}$}
\usefont{T1}{ptm}{m}{n}
\rput(7.1714063,5.5021877){$\{1\}$}
\usefont{T1}{ptm}{m}{n}
\rput(8.431406,5.5021877){$\{1\}$}
\usefont{T1}{ptm}{m}{n}
\rput(7.871406,3.8421874){$\{0\}$}
\usefont{T1}{ptm}{m}{n}
\rput(5.391406,3.8221874){$\{0\}$}
\usefont{T1}{ptm}{m}{n}
\rput(14.3825,-6.4528127){\footnotesize bc$(T)=5$}
\usefont{T1}{ptm}{m}{n}
\rput(14.4225,-2.3728125){\footnotesize bc$(T)=3$}
\usefont{T1}{ptm}{m}{n}
\rput(12.491406,5.4821873){$\{0,1\}$}
\usefont{T1}{ptm}{m}{n}
\rput(16.211407,5.5021877){$\{1\}$}
\usefont{T1}{ptm}{m}{n}
\rput(13.111406,3.7621875){$\{0\}$}
\usefont{T1}{ptm}{m}{n}
\rput(15.691406,3.8221874){$\{0\}$}
\psline[linewidth=0.02cm](2.36,0.7121875)(2.74,-0.5678125)
\psline[linewidth=0.02cm](3.5,0.6921875)(2.82,-0.5678125)
\psline[linewidth=0.02cm](2.76,-0.6878125)(2.02,-1.8278126)
\psline[linewidth=0.02cm](1.12,-0.5078125)(2.02,-1.8478125)
\psline[linewidth=0.02cm](1.58,0.7121875)(1.16,-0.5478125)
\psline[linewidth=0.02cm](0.34,0.7321875)(1.08,-0.4678125)
\psdots[dotsize=0.2](2.02,-1.8278126)
\psdots[dotsize=0.2,fillstyle=solid,dotstyle=o](0.32,0.7521875)
\psdots[dotsize=0.2,fillstyle=solid,dotstyle=o](1.6,0.7521875)
\psdots[dotsize=0.2,fillstyle=solid,dotstyle=o](1.14,-0.5478125)
\psdots[dotsize=0.2,fillstyle=solid,dotstyle=o](2.78,-0.5878125)
\psdots[dotsize=0.2,fillstyle=solid,dotstyle=o](3.52,0.7721875)
\psdots[dotsize=0.2,fillstyle=solid,dotstyle=o](2.34,0.7721875)
\usefont{T1}{ptm}{m}{n}
\rput(5.411406,-0.5578125){$\{0\}$}
\usefont{T1}{ptm}{m}{n}
\rput(7.931406,-0.5378125){$\{0\}$}
\usefont{T1}{ptm}{m}{n}
\rput(4.6914062,1.1421875){$\{1\}$}
\usefont{T1}{ptm}{m}{n}
\rput(6.0714064,1.1621875){$\{1\}$}
\usefont{T1}{ptm}{m}{n}
\rput(7.1514063,1.1621875){$\{1\}$}
\usefont{T1}{ptm}{m}{n}
\rput(8.231406,1.1621875){$\{1\}$}
\usefont{T1}{ptm}{m}{n}
\rput(9.431406,1.1821876){$\{1\}$}
\usefont{T1}{ptm}{m}{n}
\rput(11.991406,1.2021875){$\{1\}$}
\usefont{T1}{ptm}{m}{n}
\rput(9.751407,-0.4578125){$\{0\}$}
\usefont{T1}{ptm}{m}{n}
\rput(11.731406,-0.4578125){$\{0\}$}
\usefont{T1}{ptm}{m}{n}
\rput(14.051406,-0.2778125){$\{0\}$}
\usefont{T1}{ptm}{m}{n}
\rput(14.171406,1.1221875){$\{1\}$}
\psline[linewidth=0.02cm](4.26,-3.2278125)(2.84,-4.5478125)
\psline[linewidth=0.02cm](2.4,-3.2078125)(2.78,-4.4878125)
\psline[linewidth=0.02cm](3.28,-3.2678125)(2.86,-4.4878125)
\psline[linewidth=0.02cm](2.82,-4.5878124)(2.06,-5.7478123)
\psline[linewidth=0.02cm](1.16,-4.4278126)(2.06,-5.7678127)
\psline[linewidth=0.02cm](1.62,-3.2078125)(1.2,-4.4678125)
\psline[linewidth=0.02cm](0.38,-3.1878126)(1.12,-4.3878126)
\psdots[dotsize=0.2](2.06,-5.7478123)
\psdots[dotsize=0.2,fillstyle=solid,dotstyle=o](0.36,-3.1678126)
\psdots[dotsize=0.2,fillstyle=solid,dotstyle=o](1.64,-3.1678126)
\psdots[dotsize=0.2,fillstyle=solid,dotstyle=o](1.18,-4.4678125)
\psdots[dotsize=0.2,fillstyle=solid,dotstyle=o](2.82,-4.5478125)
\psdots[dotsize=0.2,fillstyle=solid,dotstyle=o](3.3,-3.1878126)
\psdots[dotsize=0.2,fillstyle=solid,dotstyle=o](2.38,-3.1478126)
\psdots[dotsize=0.2,fillstyle=solid,dotstyle=o](4.26,-3.1878126)
\usefont{T1}{ptm}{m}{n}
\rput(5.0514064,-2.8578124){$\{1\}$}
\usefont{T1}{ptm}{m}{n}
\rput(6.291406,-2.8578124){$\{1\}$}
\usefont{T1}{ptm}{m}{n}
\rput(7.0714064,-2.8378124){$\{1\}$}
\usefont{T1}{ptm}{m}{n}
\rput(8.011406,-2.8578124){$\{1\}$}
\usefont{T1}{ptm}{m}{n}
\rput(9.111406,-2.8578124){$\{1\}$}
\usefont{T1}{ptm}{m}{n}
\rput(5.5514064,-4.5778127){$\{0\}$}
\usefont{T1}{ptm}{m}{n}
\rput(8.031406,-4.5978127){$\{0\}$}
\usefont{T1}{ptm}{m}{n}
\rput(13.251407,-4.7378125){$\{0\}$}
\usefont{T1}{ptm}{m}{n}
\rput(15.571406,-4.7578125){$\{0\}$}
\usefont{T1}{ptm}{m}{n}
\rput(12.431406,-2.9778125){$\{1\}$}
\usefont{T1}{ptm}{m}{n}
\rput(16.231407,-2.9178126){$\{1\}$}
\psline[linewidth=0.02cm](0.0,6.1521873)(17.16,6.1721873)
\end{pspicture}
}

\caption{On the left, a simple graph $H$ given as a subgraph of the closure of coloured rooted tree $T$. 
On the right, reduction of $T$ to its branching core $T'$. Branching multiplicities are only indicated when they exceed $1$ and are shown in boldface.
 }\label{fig:branching_core}
\end{center}
\end{figure}
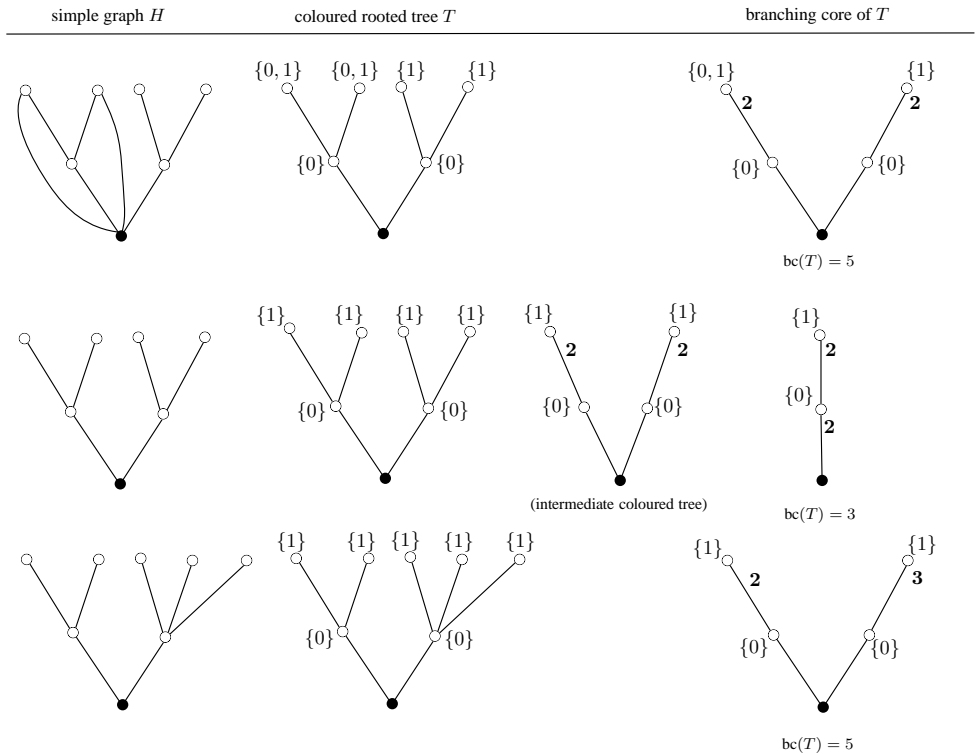



\begin{example}
The graph $K_1$ with ornament $K_k$ on its single vertex has minimum branching core size $1$, i.e., ${\rm \ul{bc}}(K_1\langle K_k\rangle)=1$. However, after composition, $K_1\langle K_k\rangle$ becomes the graph $K_1[K_k]\cong K_k$ and as a simple graph $K_k$ has minimum branching core size $k$,  which is to say ${\rm \ul{bc}}(K_k)=k$.
\end{example}

Encoding a simple graph $H$ as a coloured rooted tree $T$ with vertex $s\in V(T)$ having colour $A_s=\{|P(s)|\!-\!1\!-\!d(s,t):t\in P(s), st\in E(H)\}$, we have ${\rm td}(H)\leq {\rm \ul{bc}}(H)\leq |V(H)|$. There is equality ${\rm td}(H)={\rm \ul{bc}}(H)$ when $H$ can be encoded as a coloured rooted tree $T$ whose branching core is a path rooted at an endpoint. For example, ${\rm td}(K_k)={\rm \ul{bc}}(K_k)=k$. There is equality ${\rm \ul{bc}}(H)=|V(H)|$ when $H$ has no automorphisms arising from exchanging isomorphic branches in a coloured tree representation of $H$ (equivalently, $H$ has no involutive automorphisms with a fixed point). For example, ${\rm \ul{bc}}(P_{2\ell})=2\ell$, whereas ${\rm \ul{bc}}(P_{2\ell-1})=\ell$.
Another example: the star $K_{1,k}$ is a subgraph of the closure of the tree $T=K_{1,k}$ rooted at its central vertex and having all its leaves at height $1$. This gives a coloured rooted tree of branching core size $2$, which clearly is mimimum, so ${\rm \ul{bc}}(K_{1,k})=2$. On the other hand, by embedding $K_{1,k}$ as a subgraph of the closure of $P_{k+1}$ rooted at an endpoint we have a coloured rooted tree representation of $K_{1,k}$ with branching core size $k+1$.

Generally, given $H$, there are many ways to embed it as a spanning subgraph of the closure of a rooted tree: choose a vertex of $H$ to be the root, and then a spanning tree of $H$. The example of $\{K_{1,k}:k\in\mathbb{N}\}$ shows that a family of graphs can have bounded minimum branching core size but have an encoding by coloured rooted trees that have unbounded branching core size.

\begin{remark}
(i) Any finite tree $T$ (not rooted or coloured) has either a vertex or an edge which is fixed by all automorphisms, according as it is central or bicentral. In the latter case there are no involutive automorphisms with a fixed point. In the former case, taking the centre of $T$ as root, all automorphisms of the tree are also automorphisms of the rooted tree. Thus, any given tree $T$ always has a coloured rooted version of itself whose automorphism group is exactly the set of automorphisms of $T$ as a simple unrooted graph that have a fixed point.

(ii) For a simple graph $H$ that is not a tree there is not necessarily a coloured rooted tree $T$ encoding $H$ as a subgraph of ${\rm clos}(T)$ with the property that {\em all} involutive automorphisms of $H$ with a fixed point can be realized as an automorphism of $T$. It is not even true that for each involutive automorphism of $H$ with a fixed point there will be {\em some} coloured rooted tree $T$ for which this automorphism of $H$ corresponds to an automorphism of $T$.
For example, the triangle $C_3$ has three involutive automorphisms having a fixed point, but a coloured rooted tree $T$ encoding $C_3$ as a subgraph of ${\rm clos}(T)$ has no automorphisms (since each vertex of $C_3$ must appear at a different level of $T$). 
%

So while the minimum branching core size depends on the size of the subgroup of automorphisms of $H$ that are involutive with fixed points, it may not reflect its true size due to the loss of symmetry in encoding it by a coloured rooted tree.
\end{remark}

We say that a family of simple graphs $\mathcal{H}$ has bounded minimum branching core size if $\{{\rm \ul{bc}}(H):H\in\mathcal{H}\}$ is bounded. 
The family $\mathcal{H}$ cannot have bounded minimum branching core size if is has unbounded tree-depth, because maximal subtrees rooted at different levels cannot be isomorphic.
So if $\mathcal{H}$ has bounded minimum branching core size then there are encodings of $H\in\mathcal{H}$ by coloured rooted trees $T$ of bounded height and such that the branching core of $T$ has a bounded number of non-isomorphic subtrees at each level.

We first state the main result of this section for simple graphs, and extend it to ornamented graphs in Corollary~\ref{cor:ornamented}.

\begin{thm}\label{thm:bounded_bushiness}
Let $\mathcal{H}$ be a family of simple graphs of bounded minimum branching core size. 

Then $\mathcal{H}$ can be partitioned into a finite number of strongly polynomial branching subsequences.
More specifically, there is a partition of $\mathcal{H}$ into subsets $\mathcal{H}_\ell$, $1\leq \ell\leq m$, and indexing $\mathcal{H}_\ell=(H_{\mathbf{k}_\ell}:\mathbf{k}_\ell\in I_\ell)$ for some $I_\ell\subseteq\mathbb{N}^{h_\ell}$ ($h_\ell\geq 1$) so that, for each $\ell$,
\begin{itemize}
\item[(i)]  there is a rooted tree $T_\ell$ with $|V(T_\ell)|=h_\ell$ and fixed graph $G_\ell\subseteq{\rm clos}(T_\ell)$ such that for every $\mathbf{k}_\ell\in I_\ell$ we have $H_{\mathbf{k}_\ell}=T_\ell^{\mathbf{k}_\ell}(G_\ell)$, and
\item[(ii)] $(H_{\mathbf{k}_\ell})$ is a strongly polynomial subsequence indexed by ${\mathbf{k}_\ell}\in I_\ell\subseteq\mathbb{N}^{h_\ell}$. 
\end{itemize}
\end{thm}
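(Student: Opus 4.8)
\section*{Proof proposal}

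The plan is to combine the branching-core apparatus with Theorem~\ref{thm:poly_branching}. The governing observation is that ``bounded minimum branching core size'' forces every $H\in\mathcal{H}$ to admit a coloured rooted tree encoding whose branching core has only boundedly many vertices, and small branching cores realize only finitely many combinatorial types; the graphs attached to a fixed type then form, directly by Theorem~\ref{thm:poly_branching}, one of the required strongly polynomial branching subsequences, and finitely many types give the finite partition.

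First I would set $b=\max\{{\rm \underline{bc}}(H):H\in\mathcal{H}\}$, finite by hypothesis. For each $H\in\mathcal{H}$ the definition of ${\rm \underline{bc}}$ provides a coloured rooted tree $R_H$ on vertex set $V(H)$, with trivial ornaments $F_s=K_1$ and subsets $A_s=\{|P(s)|-1-d(s,t):t\in P(s),\ st\in E(H)\}$, such that ${\rm bc}(R_H)\le b$. Let $R_H'$ be its branching core. Then $|V(R_H')|={\rm bc}(R_H)\le b$, and since branching does not change height, ${\rm height}(R_H')={\rm height}(R_H)\le{\rm bc}(R_H)\le b$; hence every label $A_s$ occurring in $R_H'$ is a subset of $\{0,1,\dots,b-1\}$. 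Consequently the \emph{shape} of $R_H'$---the underlying rooted tree $\tau$ together with the family $(A_s)_{s\in V(\tau)}$, with the branching multiplicities forgotten---ranges over a finite set; enumerate the possible shapes as $(\tau_1,A^{(1)}),\dots,(\tau_m,A^{(m)})$.

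Next I would partition $\mathcal{H}$ by shape, putting $H$ into $\mathcal{H}_\ell$ when $R_H'$ has shape $(\tau_\ell,A^{(\ell)})$. Set $T_\ell=\tau_\ell$, $h_\ell=|V(T_\ell)|\ge 1$, and let $G_\ell\subseteq{\rm clos}(T_\ell)$ be the spanning subgraph of the closure determined by $A^{(\ell)}$. For $H\in\mathcal{H}_\ell$ write $\mathbf{k}(H)=(k_s:s\in V(T_\ell))$ for the tuple of branching multiplicities recorded in $R_H'$. Since branching the branching core $R_H'$ reproduces $R_H$ (the remark recorded just before the definition of branching core size), and $R_H$ encodes $H$ as a spanning subgraph of ${\rm clos}(R_H)$, one reads off $H=T_\ell^{\mathbf{k}(H)}(G_\ell)$ in the notation of Theorem~\ref{thm:poly_branching}. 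The assignment $H\mapsto\mathbf{k}(H)$ is injective on $\mathcal{H}_\ell$, because the shape together with the multiplicity tuple recovers $R_H'$, hence $R_H$, hence $H$; so with $I_\ell=\{\mathbf{k}(H):H\in\mathcal{H}_\ell\}\subseteq\mathbb{N}^{h_\ell}$ the family $\mathcal{H}_\ell$ is genuinely indexed as $(H_{\mathbf{k}_\ell}:\mathbf{k}_\ell\in I_\ell)$ with $H_{\mathbf{k}_\ell}=T_\ell^{\mathbf{k}_\ell}(G_\ell)$, which gives~(i). For~(ii), apply the unornamented case of Theorem~\ref{thm:poly_branching} (all ornaments equal to $K_1$) to the rooted tree $T_\ell$ and the fixed subgraph $G_\ell$ of ${\rm clos}(T_\ell)$: the whole sequence $(T_\ell^{\mathbf{k}}(G_\ell):\mathbf{k}\in\mathbb{N}^{h_\ell})$ is strongly polynomial, so its restriction to $I_\ell$, namely $\mathcal{H}_\ell$, is a strongly polynomial branching subsequence. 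Finitely many shapes yield the finite partition, completing the proof; the analogous statement for ornamented graphs is proved the same way, using the full (ornamented) Theorem~\ref{thm:poly_branching} and the hypothesis that only finitely many distinct ornaments occur.

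The substantive analytic content is already packaged inside Theorem~\ref{thm:poly_branching}, so the work that remains here is essentially bookkeeping, and the point to get right is the finiteness: bounded branching core size must be seen to pin down not merely the number of vertices of the branching core but its entire combinatorial type, the subsets $A_s$ being controlled precisely because ${\rm height}\le{\rm bc}$. The other thing requiring care is the clean identification $H=T_\ell^{\mathbf{k}(H)}(G_\ell)$, i.e.\ that undoing the branching-core reduction by branching returns the original coloured rooted tree encoding of $H$; granting this, and the injectivity of $H\mapsto\mathbf{k}(H)$ that makes the indexing well defined, the argument goes through routinely.
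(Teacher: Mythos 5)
Your proposal is correct and follows essentially the same route as the paper: pass to the branching core, use ${\rm height}\le{\rm bc}\le b$ to show only finitely many coloured-rooted-tree shapes can occur, partition $\mathcal{H}$ by shape, read off $H=T_\ell^{\mathbf{k}(H)}(G_\ell)$, and invoke Theorem~\ref{thm:poly_branching} for strong polynomiality. The extra details you supply (explicit bound on the labels $A_s$, and injectivity of $H\mapsto\mathbf{k}(H)$ to justify the indexing) are sound but are left implicit in the paper's version.
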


\begin{proof} It suffices to prove (i), since (ii) then follows by Theorem~\ref{thm:poly_branching} (the sequence ($T_\ell^{\mathbf{k}_\ell}(G_\ell)$) indexed by ${\mathbf{k}_\ell}\in I_\ell\subseteq\mathbb{N}^{h_\ell}$ is a subsequence of the strongly polynomial sequence ($T_\ell^{\mathbf{k}}(G_\ell):\mathbf{k}\in \mathbb{N}^{h_\ell}$).

By the assumption that $\mathcal{H}$ is of bounded minimum branching core size, there is a bound $B$ such that for each $H\in\mathcal{H}$ we can encode $H$ by an asymmetric coloured rooted tree $T$ with $|V(T)|={\rm \ul{bc}}(H)\leq B$, where vertex $s\in V(T)$ is coloured by a set $A_s\subseteq\{0,1,\ldots, {\rm height}(T)\!-\!1\}$ encoding adjacencies of a subgraph $G$ of ${\rm clos}(T)$, such that $H\cong T^{\mathbf{k}}(G)$ for some $\mathbf{k}\in\mathbb{N}^{|V(T)|}$. Since height$(T)<|V(T)|\leq B$ there is a finite number of possible colours for $A_s$, in particular implying that there is a finite number of possibilities for the coloured rooted tree $T$ when expressing any $H\in\mathcal{H}$ in this form.
Let $\{T_\ell:1\leq\ell\leq m\}$ be this finite set of coloured rooted trees (where colours encode
 adjacencies of a subgraph $G_\ell$ of ${\rm clos}(T_\ell)$), i.e., such that each $H\in\mathcal{H}$ is of the form $T_\ell^{\mathbf{k}_\ell}(G_\ell)$ for some $\mathbf{k}_\ell\in\mathbb{N}^{|V(T_\ell)|}$.

By taking $I_\ell=\{\mathbf{k}_\ell\in\mathbb{N}^{|V(T_\ell)|}:H= T_\ell^{\mathbf{k}}(G_\ell)\;\mbox{\rm for some}\; H\in\mathcal{H}\}$ we obtain statement (i) of the theorem.
\end{proof}

\begin{remark} 
Sometimes a family of graphs $\mathcal{H}$ already presented as a sequence $(H_{\mathbf{k}})$ for $\mathbf{k}\in \mathbb{N}^h$ may require re-indexing by a larger number of variables in order to realize it as a polynomial sequence. For example, in Figure~\ref{fig:polybranching} we illustrate an example of a sequence of graphs $(H_k)$ of bounded minimum branching core size
 indexed by a single parameter $k$, and for which ${\rm hom}(K_1,H_k)=2^{k+1}+1$ and ${\rm hom}(K_2,H_k)=2^{k+1}+k$.
This excludes the possibility that there might be a re-indexing function $f:\mathbb{N}\rightarrow\mathbb{N}$ such that for every graph $G$ we have ${\rm hom}(G,H_k)=p(G;f(k))$ for a polynomial $p(G)$.
However, if we re-index the sequence in terms of two parameters $k_1=k$ and $k_2=2^k-k$ then there is by Theorem~\ref{thm:poly_branching} a bivariate polynomial $p(G)$ such that ${\rm hom}(G,H_{k_1,k_2})=p(G;k_1,k_2)$.
(Compare the case of the sequence of hypercubes $(Q_k)$, Example~\ref{ex:cubes} in Section~\ref{examples}.) \end{remark}

An easy consequence of Theorem~\ref{thm:bounded_bushiness} is a criterion for a set of ornamented graphs to be decomposable into a finite number of strongly polynomial subsequences. 

\begin{corollary}\label{cor:ornamented}
Let $\mathcal{F}$ be a finite set of strongly polynomial graph sequences, each indexed by $\mathbf{j}\in\mathbb{N}^h$ (for some $h$), and $\mathcal{H}$ a family of ornamented graphs, such that each ornament on a vertex $s$ is a graph sequence $(F_{s;\mathbf{j}})$ belonging to~$\mathcal{F}$. 

Suppose that $\mathcal{H}$ has bounded minimum branching core size, i.e.,\newline $\{{\rm \ul{bc}}(H\langle\{(F_{s;\mathbf{j}})\}\rangle):H\langle\{F_{s;\mathbf{j}}\}\rangle\in\mathcal{H}\}$ is bounded.

Then the family of compositions $\{H[\{F_{s;\mathbf{j}}\}]:H\langle\{(F_{s;\mathbf{j}})\}\rangle\in\mathcal{H},\, \mathbf{j}\in\mathbb{N}^h\}$ can be partitioned into a finite number of strongly polynomial subsequences, $(H_{\mathbf{k}_\ell,\mathbf{j}}\,:\,\mathbf{k}_\ell\in I_\ell,\mathbf{j}\in\mathbb{N}^{h})$, for some $I_\ell\subseteq \mathbb{N}^{h_\ell}$ ($h_\ell\geq 1$).
Moreover,
\begin{itemize}
\item[(i)]  there is a rooted tree $T_\ell$ with $|V(T_\ell)|=h_\ell$, such that for every $\mathbf{k}_\ell\in I_\ell$ we have $H_{\mathbf{k}_\ell,\mathbf{j}}=T_\ell^{\mathbf{k}_\ell}(G_\ell)[\{F_{s;\mathbf{j}}\}]$ for some fixed graph $G_\ell\subseteq{\rm clos}(T_\ell)$, and
\item[(ii)] for each fixed $\mathbf{j}$, $(H_{\mathbf{k}_\ell,\mathbf{j}})$ is a strongly polynomial subsequence indexed by ${\mathbf{k}_\ell}\in I_\ell\subseteq\mathbb{N}^{h_\ell}$.
\end{itemize}
\end{corollary}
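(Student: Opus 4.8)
The plan is to follow the proof of Theorem~\ref{thm:bounded_bushiness} almost verbatim, the only new ingredient being that the colour of a vertex in the coding rooted tree now carries an ornament sequence drawn from the finite set $\mathcal{F}$ in addition to the subset $A_s$ encoding adjacencies. As there, it suffices to establish (i): once each composition in the family is exhibited in the form $T_\ell^{\mathbf{k}_\ell}(G_\ell)[\{F_{s;\mathbf{j}}\}]$ for one of finitely many rooted trees $T_\ell$, the remaining claims follow from the ornamented version of Theorem~\ref{thm:poly_branching}, applied with $\mathbf{j}_s=\mathbf{j}$ for every $s\in V(T_\ell)$. That theorem gives that $(T_\ell^{\mathbf{k}_\ell}(G_\ell)[\{F_{s;\mathbf{j}}\}])$ is strongly polynomial in $(\mathbf{j},\mathbf{k}_\ell)$ jointly; restricting to $\mathbf{k}_\ell\in I_\ell$ yields a strongly polynomial subsequence, and fixing $\mathbf{j}$ yields (ii).

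First I would invoke the hypothesis of bounded minimum branching core size to fix a bound $B$ with ${\rm \ul{bc}}(H\langle\{F_{s;\mathbf{j}}\}\rangle)\leq B$ for every member of $\mathcal{H}$. For each such ornamented graph choose an asymmetric coloured rooted tree $T$ realizing this minimum, so $|V(T)|\leq B$ and ${\rm height}(T)<B$; vertex $s$ carries colour $(A_s,(F_{s;\mathbf{j}}))$ with $A_s\subseteq\{0,1,\ldots,B-2\}$ encoding a spanning subgraph $G$ of ${\rm clos}(T)$ and $(F_{s;\mathbf{j}})\in\mathcal{F}$. Because there are at most $2^{B-1}$ choices for each set $A_s$ and $|\mathcal{F}|$ choices for each ornament sequence, and at most $B$ vertices, there are only finitely many such coloured rooted trees; list them as $T_1,\ldots,T_m$, with $T_\ell$ coding a fixed graph $G_\ell\subseteq{\rm clos}(T_\ell)$ and carrying a fixed ornament sequence on each vertex. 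Every $H\langle\{F_{s;\mathbf{j}}\}\rangle\in\mathcal{H}$ is then of the form $T_\ell^{\mathbf{k}_\ell}(G_\ell)$ ornamented by the sequences attached to $T_\ell$, for some $\ell$ and some $\mathbf{k}_\ell\in\mathbb{N}^{|V(T_\ell)|}$; assigning each composition $H[\{F_{s;\mathbf{j}}\}]$ to one such representative $T_\ell$ and setting $I_\ell$ to be the corresponding set of realised branching multiplicities gives the required partition into the subsequences $(T_\ell^{\mathbf{k}_\ell}(G_\ell)[\{F_{s;\mathbf{j}}\}])$, proving (i) with $h_\ell=|V(T_\ell)|$.

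The point that needs care — and where I expect the real content of the argument to sit — is the notion of ``isomorphic ornamented subtree'' used when passing to the asymmetric (branching-core) form: two pendant subtrees $B(s_1)$, $B(s_2)$ are collapsed only if they agree both in their $A$-labels and in the ornament sequences they carry, consistent with the fact that ${\rm \ul{bc}}$ of an ornamented graph is defined with respect to the full colour $(A_s,F_s)$. One should check that this stronger equivalence still makes the branching core well defined (the argument of Lemma~\ref{lem:branching_order} goes through unchanged, since branching and its inverse act identically on every colour component), and that reversing the reduction recovers the original ornamented graph, so that $H\langle\{F_{s;\mathbf{j}}\}\rangle\cong T_\ell^{\mathbf{k}_\ell}(G_\ell)\langle\{F_{s;\mathbf{j}}\}\rangle$ and hence, after composition, $H[\{F_{s;\mathbf{j}}\}]\cong T_\ell^{\mathbf{k}_\ell}(G_\ell)[\{F_{s;\mathbf{j}}\}]$. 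A secondary bookkeeping point is that a single underlying simple graph may occur in $\mathcal{H}$ with several different ornamentations; these are distinct members of $\mathcal{H}$ and may be assigned to different parts, which causes no difficulty.
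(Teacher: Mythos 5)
Your proof is correct and follows exactly the route the paper intends: the paper states the corollary as ``an easy consequence of Theorem~\ref{thm:bounded_bushiness}'' without giving a proof, and your argument is precisely the one implied there, namely running the proof of Theorem~\ref{thm:bounded_bushiness} with colours enriched by ornament sequences from the finite set $\mathcal{F}$ and then invoking the ornamented case of Theorem~\ref{thm:poly_branching} with $\mathbf{j}_s=\mathbf{j}$. Your closing observation that ``isomorphic subtree'' must be taken to mean agreement of both the $A_s$-labels and the attached ornament sequence (so that the branching-core reduction remains well defined and reversible) is the right point to flag, and it is indeed the only genuinely new detail beyond the unornamented theorem.
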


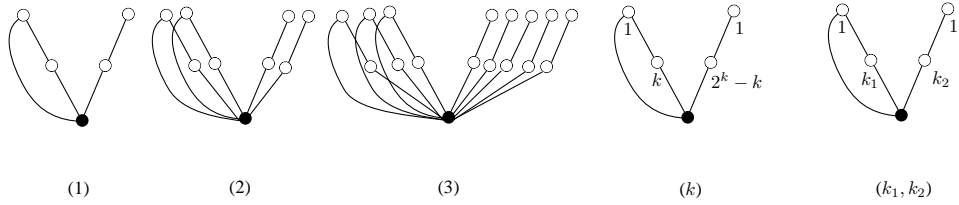
\begin{figure}
\begin{center}
\scalebox{0.7} 
{
\begin{pspicture}(0,-1.9184375)(18.401875,1.8984375)
\psdots[dotsize=0.24](1.54,-0.3615625)
\psline[linewidth=0.02cm](1.94,0.5984375)(1.56,-0.3015625)
\psline[linewidth=0.02cm](1.0,0.6184375)(1.5,-0.3015625)
\psline[linewidth=0.02cm](0.46,1.5784374)(0.96,0.6584375)
\psdots[dotsize=0.24,fillstyle=solid,dotstyle=o](0.42,1.6384375)
\psdots[dotsize=0.24,fillstyle=solid,dotstyle=o](0.96,0.6784375)
\psline[linewidth=0.02cm](2.38,1.5784374)(2.0,0.6784375)
\psdots[dotsize=0.24,fillstyle=solid,dotstyle=o](2.42,1.6584375)
\psdots[dotsize=0.24,fillstyle=solid,dotstyle=o](1.98,0.6784375)
\psbezier[linewidth=0.02](0.3321916,1.5584375)(0.0,1.3184375)(0.18,0.71358234)(0.46,0.2384375)(0.74,-0.23670734)(1.12,-0.3615625)(1.52,-0.3815625)
\psdots[dotsize=0.24](4.64,-0.3215625)
\psline[linewidth=0.02cm](5.04,0.6384375)(4.66,-0.2615625)
\psline[linewidth=0.02cm](4.1,0.6584375)(4.6,-0.2615625)
\psline[linewidth=0.02cm](3.56,1.6184375)(4.06,0.6984375)
\psdots[dotsize=0.24,fillstyle=solid,dotstyle=o](3.52,1.6784375)
\psdots[dotsize=0.24,fillstyle=solid,dotstyle=o](4.06,0.7184375)
\psline[linewidth=0.02cm](5.48,1.6184375)(5.1,0.7184375)
\psdots[dotsize=0.24,fillstyle=solid,dotstyle=o](5.46,1.6184375)
\psdots[dotsize=0.24,fillstyle=solid,dotstyle=o](5.08,0.7184375)
\psbezier[linewidth=0.02](3.4321916,1.5984375)(3.1,1.3584375)(3.28,0.75358236)(3.56,0.2784375)(3.84,-0.19670735)(4.22,-0.3215625)(4.62,-0.3415625)
\psline[linewidth=0.02cm](3.18,1.5784374)(3.68,0.6584375)
\psdots[dotsize=0.24,fillstyle=solid,dotstyle=o](3.14,1.6384375)
\psbezier[linewidth=0.02](3.0521915,1.5584375)(2.72,1.3184375)(2.9,0.71358234)(3.18,0.2384375)(3.46,-0.23670734)(4.2,-0.3215625)(4.64,-0.3815625)
\psline[linewidth=0.02cm](3.74,0.6184375)(4.52,-0.3615625)
\psdots[dotsize=0.24,fillstyle=solid,dotstyle=o](3.68,0.6784375)
\psline[linewidth=0.02cm](5.36,0.5584375)(4.6,-0.4215625)
\psline[linewidth=0.02cm](5.8,1.5384375)(5.42,0.6384375)
\psdots[dotsize=0.24,fillstyle=solid,dotstyle=o](5.84,1.6184375)
\psdots[dotsize=0.24,fillstyle=solid,dotstyle=o](5.4,0.6384375)
\psdots[dotsize=0.24](8.5,-0.3015625)
\psline[linewidth=0.02cm](8.96,0.6584375)(8.52,-0.2415625)
\psline[linewidth=0.02cm](7.96,0.6784375)(8.46,-0.2415625)
\psline[linewidth=0.02cm](7.42,1.6384375)(7.92,0.7184375)
\psdots[dotsize=0.24,fillstyle=solid,dotstyle=o](7.38,1.6984375)
\psdots[dotsize=0.24,fillstyle=solid,dotstyle=o](7.92,0.7384375)
\psline[linewidth=0.02cm](9.34,1.6384375)(8.96,0.7384375)
\psdots[dotsize=0.24,fillstyle=solid,dotstyle=o](9.32,1.6384375)
\psdots[dotsize=0.24,fillstyle=solid,dotstyle=o](8.98,0.7384375)
\psbezier[linewidth=0.02](7.2921915,1.6184375)(6.96,1.3784375)(7.14,0.77358234)(7.42,0.2984375)(7.7,-0.17670734)(8.08,-0.3015625)(8.48,-0.3215625)
\psline[linewidth=0.02cm](7.04,1.5984375)(7.54,0.6784375)
\psdots[dotsize=0.24,fillstyle=solid,dotstyle=o](7.0,1.6584375)
\psbezier[linewidth=0.02](6.9121914,1.5784374)(6.58,1.3384376)(6.76,0.7335824)(7.04,0.2584375)(7.32,-0.21670735)(8.06,-0.3015625)(8.5,-0.3615625)
\psline[linewidth=0.02cm](7.6,0.6384375)(8.38,-0.3415625)
\psdots[dotsize=0.24,fillstyle=solid,dotstyle=o](7.54,0.6984375)
\psline[linewidth=0.02cm](9.22,0.5784375)(8.46,-0.4015625)
\psline[linewidth=0.02cm](9.66,1.5584375)(9.28,0.6584375)
\psdots[dotsize=0.24,fillstyle=solid,dotstyle=o](9.7,1.6384375)
\psdots[dotsize=0.24,fillstyle=solid,dotstyle=o](9.28,0.6784375)
\psline[linewidth=0.02cm](6.52,1.5584375)(7.02,0.6384375)
\psdots[dotsize=0.24,fillstyle=solid,dotstyle=o](6.48,1.6184375)
\psbezier[linewidth=0.02](6.3921914,1.5384375)(6.06,1.2984375)(6.24,0.69358236)(6.52,0.2184375)(6.8,-0.25670734)(8.06,-0.2815625)(8.56,-0.3815625)
\psline[linewidth=0.02cm](7.08,0.5984375)(8.5,-0.4015625)
\psdots[dotsize=0.24,fillstyle=solid,dotstyle=o](7.02,0.6584375)
\psline[linewidth=0.02cm](10.1,1.6384375)(9.66,0.7184375)
\psdots[dotsize=0.24,fillstyle=solid,dotstyle=o](10.08,1.6184375)
\psline[linewidth=0.02cm](9.98,0.5784375)(8.58,-0.3615625)
\psline[linewidth=0.02cm](10.42,1.5584375)(10.04,0.6584375)
\psdots[dotsize=0.24,fillstyle=solid,dotstyle=o](10.46,1.6384375)
\psdots[dotsize=0.24,fillstyle=solid,dotstyle=o](10.02,0.6584375)
\psdots[dotsize=0.24,fillstyle=solid,dotstyle=o](9.6,0.6784375)
\psline[linewidth=0.02cm](9.5,0.5784375)(8.58,-0.3215625)
\psline[linewidth=0.02cm](10.8,1.5584375)(10.42,0.6584375)
\psdots[dotsize=0.24,fillstyle=solid,dotstyle=o](10.84,1.6384375)
\psdots[dotsize=0.24,fillstyle=solid,dotstyle=o](10.36,0.6584375)
\psline[linewidth=0.02cm](10.28,0.5584375)(8.52,-0.3815625)
\psdots[dotsize=0.24](13.04,-0.3015625)
\psline[linewidth=0.02cm](13.44,0.6584375)(13.06,-0.2415625)
\psline[linewidth=0.02cm](12.5,0.6784375)(13.0,-0.2415625)
\psline[linewidth=0.02cm](11.96,1.6384375)(12.46,0.7184375)
\psdots[dotsize=0.24,fillstyle=solid,dotstyle=o](11.92,1.6984375)
\psdots[dotsize=0.24,fillstyle=solid,dotstyle=o](12.46,0.7384375)
\psline[linewidth=0.02cm](13.88,1.6384375)(13.5,0.7384375)
\psdots[dotsize=0.24,fillstyle=solid,dotstyle=o](13.92,1.7184376)
\psdots[dotsize=0.24,fillstyle=solid,dotstyle=o](13.48,0.7384375)
\psbezier[linewidth=0.02](11.832191,1.6184375)(11.5,1.3784375)(11.68,0.77358234)(11.96,0.2984375)(12.24,-0.17670734)(12.62,-0.3015625)(13.02,-0.3215625)
\usefont{T1}{ptm}{m}{n}
\rput(12.4514065,0.4084375){$k$}
\usefont{T1}{ptm}{m}{n}
\rput(11.9114065,1.3684375){$1$}
\usefont{T1}{ptm}{m}{n}
\rput(14.031406,1.3884375){$1$}
\usefont{T1}{ptm}{m}{n}
\rput(13.971406,0.4){$2^k-k$}
\usefont{T1}{ptm}{m}{n}
\rput(1.4751563,-1.6715626){(1)}
\usefont{T1}{ptm}{m}{n}
\rput(4.5351562,-1.6515625){(2)}
\usefont{T1}{ptm}{m}{n}
\rput(8.495156,-1.6515625){(3)}
\usefont{T1}{ptm}{m}{n}
\rput(13.095157,-1.6915625){($k$)}
\psdots[dotsize=0.24](17.1,-0.2615625)
\psline[linewidth=0.02cm](17.5,0.6984375)(17.12,-0.2015625)
\psline[linewidth=0.02cm](16.56,0.7184375)(17.06,-0.2015625)
\psline[linewidth=0.02cm](16.02,1.6784375)(16.52,0.7584375)
\psdots[dotsize=0.24,fillstyle=solid,dotstyle=o](15.98,1.7384375)
\psdots[dotsize=0.24,fillstyle=solid,dotstyle=o](16.52,0.7784375)
\psline[linewidth=0.02cm](17.94,1.6784375)(17.56,0.7784375)
\psdots[dotsize=0.24,fillstyle=solid,dotstyle=o](17.98,1.7584375)
\psdots[dotsize=0.24,fillstyle=solid,dotstyle=o](17.54,0.7784375)
\psbezier[linewidth=0.02](15.892192,1.6584375)(15.56,1.4184375)(15.74,0.81358236)(16.02,0.3384375)(16.3,-0.13670735)(16.68,-0.2615625)(17.08,-0.2815625)
\usefont{T1}{ptm}{m}{n}
\rput(16.5,0.4){$k_1$}
\usefont{T1}{ptm}{m}{n}
\rput(15.971406,1.4084375){$1$}
\usefont{T1}{ptm}{m}{n}
\rput(18.091406,1.4284375){$1$}
\usefont{T1}{ptm}{m}{n}
\rput(17.851406,0.4284375){$k_2$}
\usefont{T1}{ptm}{m}{n}
\rput(17.145157,-1.6715626){($k_1, k_2$)}
\end{pspicture}
}
\caption{A sequence $(H_k)$ of graphs of branching core size $5$ that is not polynomial in the parameter $k$. (Number beside each non-root vertex indicate multiplicity of branchings.) However, when re-indexed by the pair of branching parameters $(k_1, k_2)$, the sequence $(H_{k,2^k-k})$ is polynomial in $k$ and $2^k-k$ and a subsequence of the strongly polynomial sequence $(H_{k_1,k_2})$ (illustrated as the right-most graph).}\label{fig:polybranching}
\end{center}
\end{figure}


Let us return to the unornamented graphs of Theorem~\ref{thm:bounded_bushiness} and the question of whether some partial converse is possible.

There are families of graphs of unbounded minimum branching core size, some of which can still be partitioned into a finite number of polynomial subsequences such as that in Figure~\ref{fig:unbounded_bc}(c) and Figure~\ref{fig:brush and k-ary tree}(c), and others such as  Figure~\ref{fig:unbounded_bc}(a) and  Figure~\ref{fig:brush and k-ary tree}(a) and~\ref{fig:brush and k-ary tree}(b) which cannot be so partitioned.

The family illustrated in Figure~\ref{fig:unbounded_bc}(a), in which $k_1,\ldots, k_\ell$ are independent variables and $\ell$ unbounded, is of bounded tree-depth (each graph has tree-depth $3$) but cannot be partitioned into a finite number of polynomial subsequences; the branching core size of these graphs is equal to $2\ell+1$ and hence unbounded. On the other hand, for each graph $G$ there is a multivariate polynomial in infinitely many variables, $p(G;x_1,x_2,\ldots)$ such that the number of homomorphisms from $G$ to the graph in Figure~\ref{fig:unbounded_bc}(a) is an evaluation $p(G;k_1,\ldots, k_\ell,0,0,\ldots)$ of this polynomial. 
The same is true of the analogously defined family in Figure~\ref{fig:brush and k-ary tree}(a) (and here the tree-depth is equal to $\ell+1$, equal to the branching core size).

The closure of the tree in Figure~\ref{fig:unbounded_bc}(c) (let us denote it by $H_{k,\ell}$)  is the lexicographic product of $K_{1,\ell}$ with graphs $K_{1,p_1(k)},K_{1,p_2(k)},\ldots, K_{1,p_\ell(k)}$ on its leaves $v_1,\ldots, v_\ell$ and $K_1$ on its central vertex $v_0$. Thus, in a similar way to the proof of Proposition~3.8, the number of homomorphisms from $G$ to this graph $H_{k,\ell}$ is seen to be given by
\begin{equation}\label{eq:starstarclos}\sum_{\stackrel{f:G\rightarrow K_{1,\ell}[\{K_1^1\}]}{\mbox{\rm \tiny homomorphism}}}{\rm hom}(G[f^{-1}(\{v_0\})],K_1)\prod_{1\leq i\leq\ell}{\rm hom}(G[f^{-1}(\{v_i\})], K_{1,p_i(k)}),\end{equation}
where $G[f^{-1}[\{v_i\}]$ is the induced subgraph of $G$ on vertices sent by homomorphism $f$ to $v_i$. There are at most $|V(G)|$ factors not equal to $1$ or $0$ in the sum~\eqref{eq:starstarclos}. Furthermore, the expression~\eqref{eq:starstarclos} is symmetric in variables $p_1(k), p_2(k), \ldots, p_\ell(k)$ since the leaves $v_1,v_2,\ldots, v_\ell$ of $K_{1,\ell}$ are twin vertices (so we have by permutation of these vertices an action on homomorphisms from $G$).
For each $1\leq i\leq \ell$, the sequence $(K_{1,p_i(k)})$ is strongly polynomial in variable $p_i(k)$, with ${\rm hom}(G,K_{1,p_i(k)})=0$ when $G$ is non-bipartite and, when $G$ is bipartite,
 $${\rm hom}(G,K_{1,p_i(k)})=\sum_{\stackrel{U\subseteq V}{\mbox{\rm \tiny $U$, $V\backslash U$ independent}}}p_i(k)^{|V\setminus U|}.$$

The sum~\eqref{eq:starstarclos} is thus a symmetric polynomial in variables $p_1(k),p_2(k),\ldots, p_\ell(k)$, of degree at most $|V(G)|$.
Since $p_i(k)=p(i,k)$ for polynomial $p$, the sum~\eqref{eq:starstarclos} is thus a sum of polynomials in $k$ and
$1^r+2^r+\cdots + \ell^r$, for $0\leq r\leq d|V(G)|$, where $d$ is the maximum degree of $p(i,k)$ in variable $i$. Finally, since
$1^r+2^r+\cdots + \ell^r$ is a polynomial in $\ell$ of degree $r+1$, this makes the sum~\eqref{eq:starstarclos} a polynomial in $k$ and $\ell$, of degree at most $(d+1)|V(G)|$ in $\ell$. 

Is a partial converse of Theorem~\ref{thm:bounded_bushiness} to actually just require that the number of non-isomorphic subtrees {\em ignoring branching multiplicities} is finite? (In other words, reduce to branching core, and then count as isomorphic those subtrees which only differ on branching multiplicities.) For an unbounded number of some given subtree type pendant from vertex $s$ we have colour-isomorphism (for colours encoding subgraph of tree closure), but different branching numbers -- if this set of branching numbers can be expressed as values $p_s(i)$ of a polynomial in $i$ (with $i$ taking values $1,2,\dots, \ell_s$, variable $\ell_s$ unbounded being the number of isomorphic copies of subtree pendant from $s$) then we can write these graphs as strongly polynomial sequence of the form  $(T^{\mathbf{k}})^{\mathbf{p}}(H)$ -- otherwise, as a subsequence of such a strongly polynomial sequence.

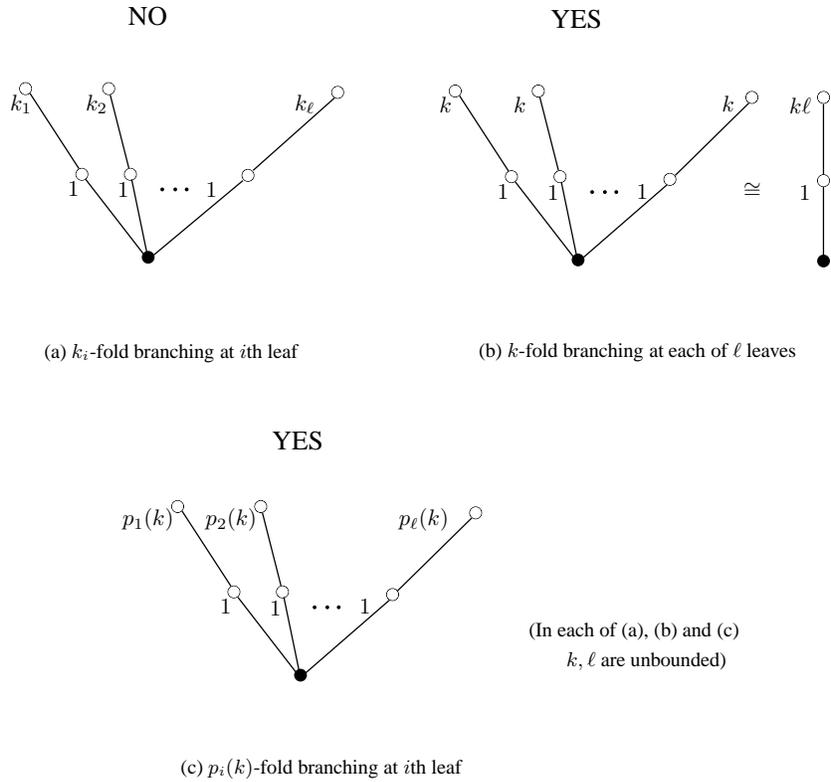
\begin{figure}
\begin{center}
\scalebox{0.85} 
{
\begin{pspicture}(0,-6.0957813)(13.24125,6.0957813)
\psline[linewidth=0.02cm](5.3209376,4.617656)(4.0409374,3.4776564)
\psline[linewidth=0.02cm](3.8809376,3.3176563)(2.4209375,2.0976562)
\psline[linewidth=0.02cm](1.4209375,3.3376563)(2.3809376,2.0976562)
\psline[linewidth=0.02cm](1.8009375,4.677656)(2.1009376,3.4976563)
\psline[linewidth=0.02cm](0.5409375,4.697656)(1.3209375,3.4976563)
\psdots[dotsize=0.2](2.4009376,2.1176562)
\psdots[dotsize=0.2,fillstyle=solid,dotstyle=o](0.4809375,4.757656)
\psdots[dotsize=0.2,fillstyle=solid,dotstyle=o](1.7809376,4.757656)
\psdots[dotsize=0.2,fillstyle=solid,dotstyle=o](5.3609376,4.697656)
\psdots[dotsize=0.2,fillstyle=solid,dotstyle=o](1.3609375,3.4176562)
\psdots[dotsize=0.2,fillstyle=solid,dotstyle=o](2.1209376,3.4176562)
\psdots[dotsize=0.2,fillstyle=solid,dotstyle=o](3.9609375,3.3976562)
\psline[linewidth=0.02cm](2.1409376,3.3376563)(2.3809376,2.1776562)
\psdots[dotsize=0.06](2.6209376,3.1776562)
\psdots[dotsize=0.06](2.8209374,3.1776562)
\psdots[dotsize=0.06](3.0009375,3.1776562)
\usefont{T1}{ptm}{m}{n}
\rput(0.41234374,4.507656){$k_1$}
\usefont{T1}{ptm}{m}{n}
\rput(1.5923438,4.507656){$k_2$}
\usefont{T1}{ptm}{m}{n}
\rput(4.8523436,4.507656){$k_\ell$}
\usefont{T1}{ptm}{m}{n}
\rput(1.2323438,3.1876562){$1$}
\usefont{T1}{ptm}{m}{n}
\rput(2.0123436,3.1676562){$1$}
\usefont{T1}{ptm}{m}{n}
\rput(3.3923438,3.1676562){$1$}
\usefont{T1}{ptm}{m}{n}
\rput(2.7578125,0.61765623){\small (a) $k_i$-fold branching at $i$th leaf}
\usefont{T1}{ptm}{m}{n}
\rput(2.394375,5.8926563){\large NO}
\psline[linewidth=0.02cm](11.800938,4.5376563)(10.640938,3.3976562)
\psline[linewidth=0.02cm](10.520938,3.2576563)(9.140938,2.0576563)
\psline[linewidth=0.02cm](8.140938,3.2976563)(9.100938,2.0576563)
\psline[linewidth=0.02cm](8.520938,4.637656)(8.820937,3.4576561)
\psline[linewidth=0.02cm](7.2609377,4.657656)(8.040937,3.4576561)
\psdots[dotsize=0.2](9.120937,2.0776563)
\psdots[dotsize=0.2,fillstyle=solid,dotstyle=o](7.2009373,4.717656)
\psdots[dotsize=0.2,fillstyle=solid,dotstyle=o](8.500937,4.717656)
\psdots[dotsize=0.2,fillstyle=solid,dotstyle=o](11.840938,4.617656)
\psdots[dotsize=0.2,fillstyle=solid,dotstyle=o](8.080937,3.3776562)
\psdots[dotsize=0.2,fillstyle=solid,dotstyle=o](8.840938,3.3776562)
\psdots[dotsize=0.2,fillstyle=solid,dotstyle=o](10.560938,3.3376563)
\psline[linewidth=0.02cm](8.860937,3.2976563)(9.100938,2.1376562)
\psdots[dotsize=0.06](9.340938,3.1376562)
\psdots[dotsize=0.06](9.540937,3.1376562)
\psdots[dotsize=0.06](9.720938,3.1376562)
\usefont{T1}{ptm}{m}{n}
\rput(7.052344,4.507656){$k$}
\usefont{T1}{ptm}{m}{n}
\rput(8.212344,4.507656){$k$}
\usefont{T1}{ptm}{m}{n}
\rput(11.472343,4.527656){$k$}
\usefont{T1}{ptm}{m}{n}
\rput(7.952344,3.1476562){$1$}
\usefont{T1}{ptm}{m}{n}
\rput(8.732344,3.1276562){$1$}
\usefont{T1}{ptm}{m}{n}
\rput(10.132343,3.1476562){$1$}
\usefont{T1}{ptm}{m}{n}
\rput(10.054063,0.6376563){\small (b) $k$-fold branching at each of $\ell$ leaves}
\usefont{T1}{ptm}{m}{n}
\rput(9.099375,5.8526564){\large YES}
\usefont{T1}{ptm}{m}{n}
\rput(11.852344,3.1676562){$\cong$}
\psline[linewidth=0.02cm](12.9609375,4.5176563)(12.9609375,3.3776562)
\psdots[dotsize=0.2,fillstyle=solid,dotstyle=o](12.9609375,4.617656)
\psline[linewidth=0.02cm](12.9609375,3.2176561)(12.9609375,2.0776563)
\psdots[dotsize=0.2,fillstyle=solid,dotstyle=o](12.9609375,3.3176563)
\psdots[dotsize=0.2](12.9609375,2.0576563)
\usefont{T1}{ptm}{m}{n}
\rput(12.602344,4.487656){$k\ell$}
\usefont{T1}{ptm}{m}{n}
\rput(12.672344,3.1276562){$1$}
\psline[linewidth=0.02cm](7.4609375,-1.9623437)(6.3009377,-3.1023438)
\psline[linewidth=0.02cm](6.1809373,-3.2423437)(4.8009377,-4.4423437)
\psline[linewidth=0.02cm](3.8009374,-3.2023437)(4.7609377,-4.4423437)
\psline[linewidth=0.02cm](4.1809373,-1.8623438)(4.4809375,-3.0423439)
\psline[linewidth=0.02cm](2.9209375,-1.8423438)(3.7009375,-3.0423439)
\psdots[dotsize=0.2](4.7809377,-4.4223437)
\psdots[dotsize=0.2,fillstyle=solid,dotstyle=o](2.8609376,-1.7823437)
\psdots[dotsize=0.2,fillstyle=solid,dotstyle=o](4.1609373,-1.7823437)
\psdots[dotsize=0.2,fillstyle=solid,dotstyle=o](7.5209374,-1.8823438)
\psdots[dotsize=0.2,fillstyle=solid,dotstyle=o](3.7409375,-3.1223438)
\psdots[dotsize=0.2,fillstyle=solid,dotstyle=o](4.5009375,-3.1223438)
\psdots[dotsize=0.2,fillstyle=solid,dotstyle=o](6.2209377,-3.1623437)
\psline[linewidth=0.02cm](4.5209374,-3.2023437)(4.7609377,-4.362344)
\psdots[dotsize=0.06](5.0009375,-3.3623438)
\psdots[dotsize=0.06](5.2009373,-3.3623438)
\psdots[dotsize=0.06](5.3809376,-3.3623438)
\usefont{T1}{ptm}{m}{n}
\rput(2.4,-1.9923438){$p_1(k)$}
\usefont{T1}{ptm}{m}{n}
\rput(3.7,-1.9923438){$p_2(k)$}
\usefont{T1}{ptm}{m}{n}
\rput(6.7123437,-1.9923438){$p_\ell(k)$}
\usefont{T1}{ptm}{m}{n}
\rput(3.6123438,-3.3523438){$1$}
\usefont{T1}{ptm}{m}{n}
\rput(4.3923435,-3.3723438){$1$}
\usefont{T1}{ptm}{m}{n}
\rput(5.7923436,-3.3523438){$1$}
\usefont{T1}{ptm}{m}{n}
\rput(5.1078124,-5.882344){\small (c) $p_i(k)$-fold branching at $i$th leaf}
\usefont{T1}{ptm}{m}{n}
\rput(4.739375,-0.7473438){\large YES}
\usefont{T1}{ptm}{m}{n}
\rput(10,-3.7323437){\small (In each of (a), (b) and (c)}
\usefont{T1}{ptm}{m}{n}
\rput(10.2,-4.2123437){\small $k, \ell$ are unbounded)}
\end{pspicture}
}
\caption{\small Which families can be partitioned into finitely many polynomial sequences? (a)~Branching $k_i$-fold for $i=1,2,\ldots$ at leaf vertices here gives a family of graphs with no partition into finitely many polynomial subsequences.
(b) Setting $k_i=k$ for each $1\leq i\leq\ell$ makes each graph in the family now have minimum branching core size $3$ and yields a strongly polynomial sequence in $k\ell$. (c) Setting $k_i=p_i(k)$ for $1\leq i\leq \ell$, where $p_i(k)$ is a polynomial in $i$ and $k$, gives a graph of minimum branching core size $2\ell+1$ (when the $p_i(k)$ are distinct, e.g. $p_i(k)=k+i$) and a strongly polynomial sequence in two variables $k$ and $\ell$.}\label{fig:unbounded_bc}
\end{center}
\end{figure}

\begin{figure}
\begin{center}
\scalebox{0.8} 
{
\begin{pspicture}(0,-4.0157814)(14.762188,4.0157814)
\psline[linewidth=0.02cm](1.6271875,3.0976562)(1.6271875,2.0976562)
\psline[linewidth=0.02cm](7.5671873,-0.38234374)(7.5671873,-1.3823438)
\psline[linewidth=0.02cm](7.5671873,-1.4823438)(7.5671873,-2.4823437)
\psdots[dotsize=0.2](7.5671873,-2.5223436)
\psdots[dotsize=0.2,fillstyle=solid,dotstyle=o](7.5671873,-1.4223437)
\psdots[dotsize=0.2,fillstyle=solid,dotstyle=o](7.5671873,-0.32234374)
\psdots[dotsize=0.06](7.5071874,0.39765626)
\psdots[dotsize=0.06](7.5071874,0.53765625)
\psdots[dotsize=0.06](7.5071874,0.6976563)
\psline[linewidth=0.02cm](7.5071874,2.9976563)(7.5071874,1.9976562)
\psdots[dotsize=0.2,fillstyle=solid,dotstyle=o](7.5071874,3.0576563)
\usefont{T1}{ptm}{m}{n}
\rput(7.358594,2.7676563){$k$}
\usefont{T1}{ptm}{m}{n}
\rput(7.398594,-0.65234375){$k$}
\usefont{T1}{ptm}{m}{n}
\rput(7.378594,-1.7523438){$k$}
\usefont{T1}{ptm}{m}{n}
\rput(7.5,-3.4023438){\small (b) $k$-fold branching at non-root vertices}
\usefont{T1}{ptm}{m}{n}
\rput(7.5598435,-3.8023438){\small (perfect $k$-ary tree of height $\ell$)}
\psline[linewidth=0.02cm](1.6671875,-0.36234376)(1.6671875,-1.3623438)
\psline[linewidth=0.02cm](1.6671875,-1.4623437)(1.6671875,-2.4623437)
\psdots[dotsize=0.2](1.6671875,-2.5023437)
\psdots[dotsize=0.2,fillstyle=solid,dotstyle=o](1.6671875,-1.4023438)
\psdots[dotsize=0.2,fillstyle=solid,dotstyle=o](1.6671875,-0.30234376)
\psdots[dotsize=0.06](1.6271875,0.33765626)
\psdots[dotsize=0.06](1.6271875,0.47765625)
\psdots[dotsize=0.06](1.6271875,0.6376563)
\psline[linewidth=0.02cm](1.6271875,2.0176563)(1.6271875,1.1576562)
\psdots[dotsize=0.2,fillstyle=solid,dotstyle=o](1.6271875,2.0776563)
\usefont{T1}{ptm}{m}{n}
\rput(2.1,1.8076563){$k_{\ell-1}$}
\usefont{T1}{ptm}{m}{n}
\rput(2.05,-0.59234375){$k_2$}
\usefont{T1}{ptm}{m}{n}
\rput(2.05,-1.6923437){$k_1$}
\psdots[dotsize=0.2,fillstyle=solid,dotstyle=o](1.6271875,3.1576562)
\usefont{T1}{ptm}{m}{n}
\rput(2,2.8676562){$k_\ell$}
\usefont{T1}{ptm}{m}{n}
\rput(13.2,-3.4023438){\small (c) $k$-fold branching at}
\psline[linewidth=0.02cm](7.5271873,1.8976562)(7.5271873,1.0376563)
\psdots[dotsize=0.2,fillstyle=solid,dotstyle=o](7.5271873,1.9576563)
\usefont{T1}{ptm}{m}{n}
\rput(7.378594,1.7076563){$k$}
\usefont{T1}{ptm}{m}{n}
\rput(13.274844,-3.7823439){\small end-vertex of path}
\usefont{T1}{ptm}{m}{n}
\rput(1.6,-3.3823438){\small (a) $k_i$-fold branching}
\psline[linewidth=0.02cm](13.307187,2.9976563)(13.307187,1.9976562)
\psline[linewidth=0.02cm](13.347187,-0.46234375)(13.347187,-1.4623437)
\psline[linewidth=0.02cm](13.347187,-1.5623437)(13.347187,-2.5623438)
\psdots[dotsize=0.2](13.347187,-2.6023438)
\psdots[dotsize=0.2,fillstyle=solid,dotstyle=o](13.347187,-1.5023438)
\psdots[dotsize=0.2,fillstyle=solid,dotstyle=o](13.347187,-0.40234375)
\psdots[dotsize=0.06](13.307187,0.23765625)
\psdots[dotsize=0.06](13.307187,0.37765625)
\psdots[dotsize=0.06](13.307187,0.53765625)
\psline[linewidth=0.02cm](13.307187,1.9176563)(13.307187,1.0576563)
\psdots[dotsize=0.2,fillstyle=solid,dotstyle=o](13.307187,1.9776562)
\usefont{T1}{ptm}{m}{n}
\rput(13.198594,1.6476562){$1$}
\usefont{T1}{ptm}{m}{n}
\rput(13.218594,-0.67234373){$1$}
\usefont{T1}{ptm}{m}{n}
\rput(13.218594,-1.8123437){$1$}
\psdots[dotsize=0.2,fillstyle=solid,dotstyle=o](13.307187,3.0576563)
\usefont{T1}{ptm}{m}{n}
\rput(13.118594,2.7276564){$k$}
\usefont{T1}{ptm}{m}{n}
\rput(1.6510937,-3.7723436){\small at $i$th vertex from root}
\usefont{T1}{ptm}{m}{n}
\rput(1.660625,3.7926562){\large NO}
\usefont{T1}{ptm}{m}{n}
\rput(7.480625,3.8126562){\large NO}
\usefont{T1}{ptm}{m}{n}
\rput(13.265625,3.8126562){\large YES}
\end{pspicture}
}
\caption{\small Which families can be partitioned into finitely many polynomial sequences? (a)~Paths of unbounded length $\ell$ with $i$th vertex from root branched $k_i$-fold: this family has no partition into a finite number of polynomial subsequences. (b)~Perfect $k$-ary trees of height $\ell$: closures of these rooted trees do not form a polynomial sequence in $k$ and $\ell$ (but for fixed $\ell$ they form a strongly polynomial sequence in $k$; also, for $k=1$, ${\rm clos}(P_{\ell})=K_\ell$ and hom$(G,K_\ell)=P(G;\ell)$, the value of the chromatic polynomial of $G$ at $\ell$). (c)~Paths of unbounded length $\ell$ with $k$ vertices of degree $1$ added adjacent to an endpoint: closures of these rooted trees, $\overline{K}_k+K_\ell$, form a strongly polynomial sequence in $k$ and $\ell$.}\label{fig:brush and k-ary tree}

\end{center}
\end{figure}
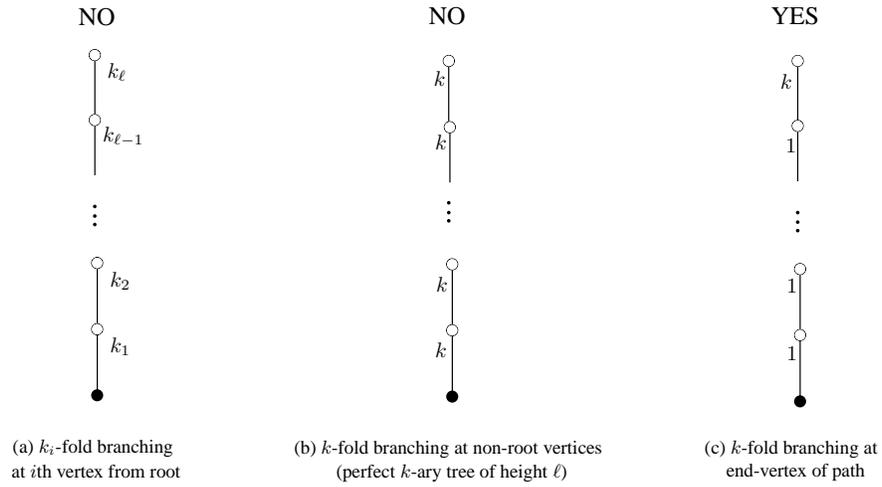

\section{Concluding remarks}\label{sec:other}

The graph polynomials $p(G;\mathbf{x})$ that we consider in this paper that are defined by $p(G;\mathbf{k})={\rm hom}(G,H_\mathbf{k})$ for a strongly polynomial sequence $(H_{\mathbf{k}})$ do not in general fall into the class of ``generalized chromatic polynomials'' of~\cite[Sect. 6]{M08} (see also~\cite[Sect. 3]{KMZ08}), due to the simple reason that the set of colourings (homomorphisms to $H_{\mathbf{k}}$) counted by $p(G;\mathbf{k})$ is not usually closed under permutation of the colour set (vertices of $H_{\mathbf{k}}$). An exceptional case is the sequence of complete graphs $(K_k)$,  which as we have seen in Example~\ref{ex:hom} give the chromatic polynomial. 

In this paper we have built strongly polynomial graph sequences by the following basic operations:
loop additions/removals and complementation (Section~\ref{sec:complementation}), blow-ups and compositions (Section~\ref{sec:ornamentation}) --- the operations thus far are used in the examples contained in Section~\ref{examples} --- and, finally and constituting the most important contribution of our paper, by branching (Section~\ref{sec:branching}).
There are however examples of graph polynomials defined by strongly polynomial graph sequences which fit into neither the general framework of our paper (construction of strongly polynomial sequences by the aforementioned basic operations) nor the ``zoology'' of~\cite{M08}. We give here two examples.

First, in Proposition~\ref{prop:hypercubes} we have seen that the sequence of hypercubes $(Q_k)$ determines a bivariate polynomial $p(G;x_1,x_2)$ such that ${\rm hom}(G, Q_k)=p(G;k,2^k)$.  A homomorphism from $G$ to $Q_k$ is the same as a colouring of the vertices of $G$ with subsets of $[k]$ having the property that colours on adjacent vertices receive colours that differ only by the addition of removal of a single element. The graph polynomial $p(G;x_1,x_2)$ thus defined by hypercube-colourings does not belong to the family of generalized chromatic polynomials of~\cite{KMZ08}, nor can the sequence $(Q_k)$ be generated by blow-ups, loop additions/removals, complementations, compositions and branchings alone. 

A second example comes from~\cite[Ex. B8]{dlHJ95}. The {\em generalized Johnson graph} $(J_{k,\ell,D})$, $1\leq\ell\leq k\in\mathbb{N}, \emptyset\subset D\subseteq\{0,1,\ldots, \ell-1\}$, is the graph whose vertices are subsets of $\{1,2,\ldots, k\}$ of size $\ell$, two vertices being adjacent if and only if their intersection has size belonging to $D$. For fixed $\ell, D$, the sequence $(J_{k,\ell,D})$ is shown in~\cite[Prop. 3]{dlHJ95} to form a polynomial sequence in $k$ (in fact it can be shown to be strongly polynomial, in a similar way to our proof for hypercubes in Proposition~\ref{prop:hypercubes} below). The graph polynomial $p(G;x)$ defined by $p(G;k)={\rm hom}(G,J_{k,\ell,D})$ does not belong to the family of generalized chromatic polynomials of~\cite{KMZ08}; neither can the sequence $(J_{k,\ell,D})$ (apart from the case $\ell=1$ when $J_{k,1,\{0\}}=K_k$)  be constructed solely by the operations of blow-ups, loop additions/removals, complementations, compositions and branchings. 


\subsection{Branching operations on other types of coloured rooted tree}\label{sec:blah}
The operation of branching coloured rooted trees applies quite generally, independent of how the colours are interpreted. The object of Section~\ref{sec:branching} was to prove that branching produces strongly polynomial sequences of graphs when the colours of vertices of $T$ encode a subgraph $H$ of ${\rm clos}(T)$ by taking appropriate subsets of $\{0,1,\ldots, {\rm height}(T)-1\}$. We then further considered augmenting this colouring by an ornament (member of a strongly polynomial sequence of graphs) to prove the same for branched compositions (Theorem~\ref{thm:poly_branching}).

There are other ways to encode a graph by coloured rooted trees and it is natural to ask whether the branching operation produces a strongly polynomial sequence in these cases too.
We give an answer for cographs (encoded by cotrees) below as this is easy to describe in limited space. 

This might be extended to clique-width expression trees~\cite{CO00}, or to rooted trees encoding $m$-partite cographs~\cite{GHNOMR12}. However, even if we show  that these ways of encoding graphs produce strongly polynomial sequences by branching, it still leaves such examples as the generalized Johnson graphs mentioned in Section~\ref{sec:blah} unaccounted for.

We therefore 
content ourselves here with just the example of cotrees to illsutrate the scope of generating strongly polynomial sequences by branching coloured rooted trees. 

\subsection{Cographs and cotrees}
Cographs are generated from $K_1$ by complementation and disjoint union (and, since $G+H=\overline{\overline{G}\cup\overline{H}}$, joins too). Complete graphs and complete bipartite graphs are basic examples of cographs.  We state here some well known properties of cographs. 


A cograph $H$ can be represented by a cotree, a rooted tree $T$ whose leaves are vertices of $H$ and whose non-leaf vertices are labelled $0$ or $1$, representing respectively disjoint union and join. A subtree rooted at a vertex $s$ labelled $0$ corresponds to the disjoint union of subgraphs defined by the children of $s$, while if $s$ is labelled $1$ then the join is taken instead.
(Two vertices of $H$ are connected by an edge if and only if the lowest common ancestor of the corresponding leaves in $T$ is labeled by $1$.)
The representation of $G$ by a cotree is unique if we require the labels on any root-leaf path of this tree to alternate between 0 and 1. 
Switching everywhere labels $0$ and $1$ corresponds to complementation.

\begin{figure}
\begin{center}
\scalebox{0.8} 
{
\begin{pspicture}(0,-2.7567186)(16.315,2.7367187)
\psline[linewidth=0.02cm](6.8621874,0.35671875)(5.9421873,-1.2032813)
\psline[linewidth=0.02cm](2.9821875,0.33671874)(2.9821875,-1.0632813)
\psdots[dotsize=0.44](3.0021875,0.51671875)
\usefont{T1}{ptm}{m}{n}
\rput(2.9735937,-1.4332813){$1$}
\usefont{T1}{ptm}{m}{n}
\rput(2.9296875,-2.5032814){\small $K_k$}
\usefont{T1}{ptm}{m}{n}
\rput(0.91359377,-0.01328125){$k$}
\pscircle[linewidth=0.02,dimen=outer](2.9821875,-1.4032812){0.32}
\psline[linewidth=0.02cm](1.2621875,0.33671874)(1.2621875,-1.0632813)
\psdots[dotsize=0.44](1.2821875,0.51671875)
\usefont{T1}{ptm}{m}{n}
\rput(1.2535938,-1.4332813){$0$}
\pscircle[linewidth=0.02,dimen=outer](1.2621875,-1.4032812){0.32}
\usefont{T1}{ptm}{m}{n}
\rput(2.7135937,0.00671875){$k$}
\usefont{T1}{ptm}{m}{n}
\rput(1.1996875,-2.5032814){\small $\overline{K_k}$}
\psline[linewidth=0.02cm](5.1221876,0.31671876)(5.7421875,-1.1632812)
\psdots[dotsize=0.44](5.0021877,2.4367187)
\usefont{T1}{ptm}{m}{n}
\rput(5.813594,-1.4932812){$1$}
\pscircle[linewidth=0.02,dimen=outer](5.8221874,-1.4832813){0.32}
\usefont{T1}{ptm}{m}{n}
\rput(4.6535935,1.9467187){$k$}
\usefont{T1}{ptm}{m}{n}
\rput(5.0135937,0.58671874){$0$}
\pscircle[linewidth=0.02,dimen=outer](5.0221877,0.61671877){0.32}
\psline[linewidth=0.02cm](5.0021877,2.3367188)(5.0021877,0.93671876)
\psdots[dotsize=0.44](7.0021877,2.4367187)
\usefont{T1}{ptm}{m}{n}
\rput(6.663594,1.9267187){$l$}
\usefont{T1}{ptm}{m}{n}
\rput(6.9935937,0.62671876){$0$}
\pscircle[linewidth=0.02,dimen=outer](7.0021877,0.65671873){0.32}
\psline[linewidth=0.02cm](6.9821873,2.3767188)(6.9821873,0.9767187)
\usefont{T1}{ptm}{m}{n}
\rput(5.8796873,-2.5432813){\small $K_{k,l}$}
\psline[linewidth=0.02cm](10.462188,0.41671875)(9.542188,-1.1432812)
\psline[linewidth=0.02cm](8.722187,0.37671876)(9.342188,-1.1032813)
\psdots[dotsize=0.44](8.602187,2.4567187)
\usefont{T1}{ptm}{m}{n}
\rput(9.413593,-1.4332813){$0$}
\pscircle[linewidth=0.02,dimen=outer](9.422188,-1.4232812){0.32}
\usefont{T1}{ptm}{m}{n}
\rput(8.253593,2.0067186){$k$}
\usefont{T1}{ptm}{m}{n}
\rput(8.613594,0.64671874){$1$}
\pscircle[linewidth=0.02,dimen=outer](8.622188,0.6767188){0.32}
\psline[linewidth=0.02cm](8.602187,2.3967187)(8.602187,0.99671876)
\psdots[dotsize=0.44](10.582188,2.4767187)
\usefont{T1}{ptm}{m}{n}
\rput(10.263594,1.9867188){$l$}
\usefont{T1}{ptm}{m}{n}
\rput(10.593594,0.68671876){$1$}
\pscircle[linewidth=0.02,dimen=outer](10.602187,0.71671873){0.32}
\psline[linewidth=0.02cm](10.582188,2.4367187)(10.582188,1.0367187)
\usefont{T1}{ptm}{m}{n}
\rput(9.439688,-2.5232813){\small $K_k\cup K_l$}
\psline[linewidth=0.02cm](13.542188,0.35671875)(13.602187,-1.1432812)
\psdots[dotsize=0.44](13.522187,2.4967186)
\usefont{T1}{ptm}{m}{n}
\rput(13.573594,-1.4732813){$1$}
\pscircle[linewidth=0.02,dimen=outer](13.582188,-1.4632813){0.32}
\usefont{T1}{ptm}{m}{n}
\rput(13.2135935,2.0467188){$k$}
\usefont{T1}{ptm}{m}{n}
\rput(13.533594,0.64671874){$0$}
\pscircle[linewidth=0.02,dimen=outer](13.542188,0.6767188){0.32}
\psline[linewidth=0.02cm](13.522187,2.3967187)(13.522187,0.99671876)
\usefont{T1}{ptm}{m}{n}
\rput(13.849688,-2.5032814){\small $K_{k,\ldots, k}=\overline{lK_k}$}
\usefont{T1}{ptm}{m}{n}
\rput(13.203594,0.10671875){$l$}
\end{pspicture}
}
\caption{Cotrees with branching. Non-leaf nodes labelled $0$ (disjoint union) or $1$ (join). Branching label indicates multiplicity of branching operation to be applied-- for example in the first cotree, $k$ copies of the leaf are produced, whose disjoint union is taken in order to form the cograph $\overline{K_k}$.}\label{fig:cograph_branching}
\end{center}
\end{figure}
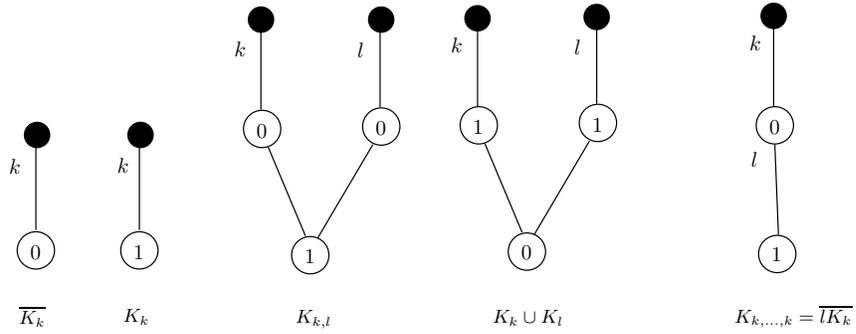

\begin{figure}
\begin{center}
\scalebox{0.8} 
{
\begin{pspicture}(0,-3.7767189)(11.555,3.7567186)
\psline[linewidth=0.02cm](3.8821876,1.3767188)(3.9421875,-0.12328125)
\psdots[dotsize=0.44](3.8621874,3.5167189)
\usefont{T1}{ptm}{m}{n}
\rput(3.9135938,-0.45328125){$1$}
\pscircle[linewidth=0.02,dimen=outer](3.9221876,-0.44328126){0.32}
\usefont{T1}{ptm}{m}{n}
\rput(3.5535936,3.0667188){$k$}
\usefont{T1}{ptm}{m}{n}
\rput(3.8735938,1.6667187){$0$}
\pscircle[linewidth=0.02,dimen=outer](3.8821876,1.6967187){0.32}
\psline[linewidth=0.02cm](3.8621874,3.4167187)(3.8621874,2.0167189)
\usefont{T1}{ptm}{m}{n}
\rput(4.0396876,-3.5632813){\small $mK_{k,\ldots, k}=\overline{K_m}[K_l[\overline{K_k}]]$}
\usefont{T1}{ptm}{m}{n}
\rput(3.5835938,1.0867188){$l$}
\psline[linewidth=0.02cm](3.9221876,-0.74328125)(3.9821875,-2.2432814)
\usefont{T1}{ptm}{m}{n}
\rput(3.9535937,-2.5732813){$0$}
\pscircle[linewidth=0.02,dimen=outer](3.9621875,-2.5632813){0.32}
\usefont{T1}{ptm}{m}{n}
\rput(3.6635938,-1.0732813){$m$}
\psline[linewidth=0.02cm](9.462188,1.3767188)(9.522187,-0.12328125)
\psdots[dotsize=0.44](9.442187,3.5167189)
\usefont{T1}{ptm}{m}{n}
\rput(9.493594,-0.45328125){$0$}
\pscircle[linewidth=0.02,dimen=outer](9.502188,-0.44328126){0.32}
\usefont{T1}{ptm}{m}{n}
\rput(9.133594,3.0667188){$k$}
\usefont{T1}{ptm}{m}{n}
\rput(9.453594,1.6667187){$1$}
\pscircle[linewidth=0.02,dimen=outer](9.462188,1.6967187){0.32}
\psline[linewidth=0.02cm](9.442187,3.4167187)(9.442187,2.0167189)
\usefont{T1}{ptm}{m}{n}
\rput(9.479688,-3.5432813){\small $K_m[\overline{K_l}[K_k]]$}
\usefont{T1}{ptm}{m}{n}
\rput(9.163593,1.0867188){$l$}
\psline[linewidth=0.02cm](9.502188,-0.74328125)(9.562187,-2.2432814)
\usefont{T1}{ptm}{m}{n}
\rput(9.533594,-2.5732813){$1$}
\pscircle[linewidth=0.02,dimen=outer](9.542188,-2.5632813){0.32}
\usefont{T1}{ptm}{m}{n}
\rput(9.243594,-1.0732813){$m$}
\end{pspicture}
}
\end{center}
\caption{Some depth $3$ cotrees with branching. Non-leaf nodes labelled $0$ (disjoint union) or $1$ (join). Branching label indicates multiplicity of branching operation to be applied.}\label{fig:depth_3}
\end{figure}
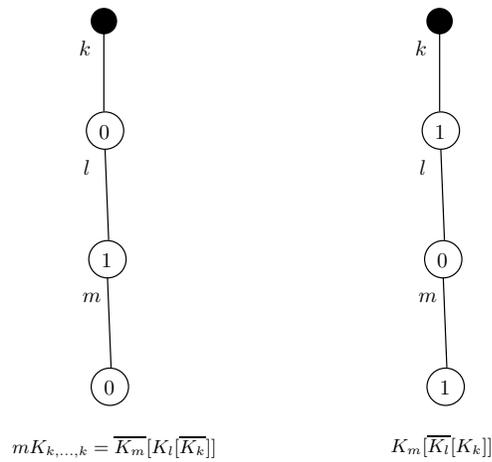

We start with a cograph $H$ given by its cotree representation, i.e., a rooted tree $T$ with labels $0$ or $1$ on non-leaf vertices and set of leaves equal to $V(H)$.
For each $\mathbf{k}\!=\!(k_s:s\!\in\!V(T))\in\mathbb{N}^{|V(T)|}$ we construct a cotree $T^{\mathbf{k}}$, in which, for each $s\in V(T)$, we create $k_s$ isomorphic copies (isomorphism including labels) of the subtree $T_s$ of $T$ rooted at $s$, all pendant from the same vertex as $T_s$. This cograph represents a larger cograph which we shall denote by $T^{\mathbf{k}}(H)$.

For example, if $H=K_1$, represented by cotree $T$ with root labelled $0$ and one leaf, then $T^{k}$ is the star $K_{1,k}$ rooted at its centre labelled $0$, and $T^k(H)=\overline{K_k}$. See Figures~\ref{fig:cograph_branching} and \ref{fig:depth_3} for further examples.   

\begin{thm}
The sequence $(T^{\mathbf{k}}(H))$ of cographs is strongly polynomial.
\end{thm}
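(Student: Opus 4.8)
The plan is to prove this by induction on the height of the cotree $T$, mirroring the recursive argument used for Proposition~\ref{thm:strong_poly_branching} but now exploiting the two operations (disjoint union and join) recorded by the cotree labels together with the constructions of Section~\ref{sec:ornamentation}. As with the root of any coloured rooted tree in this paper, the root $r$ of a cotree carries branching multiplicity $k_r=1$: branching at a child $s$ of a vertex $v$ creates $k_s$ copies of the whole subtree $T_s$ pendant from $v$, so the multiplicity $k_s$ is \emph{external} to the sub-branching taking place inside a single copy of $T_s$. (This is exactly how the paper's own example $T^k(K_1)=\overline{K}_k$ works: $K_1$ is the cotree with root labelled $0$ and one leaf, and the leaf's multiplicity $k$ produces $k$ copies of the leaf pendant from the root.)

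The base case is a cotree of height $1$: a root labelled $\varepsilon\in\{0,1\}$ whose children are leaves $s_1,\dots,s_b$; then $T^{\mathbf{k}}(H)$ is $\overline{K}_{k_{s_1}+\cdots+k_{s_b}}$ if $\varepsilon=0$ and $K_{k_{s_1}+\cdots+k_{s_b}}$ if $\varepsilon=1$, both strongly polynomial in $(k_{s_1},\dots,k_{s_b})$. For the inductive step let $T$ have height $d>1$, root $r$ labelled $\varepsilon$, and children $s_1,\dots,s_b$, with $T_{s_i}$ of height at most $d-1$. Let $C_i$ be the cograph obtained from $T_{s_i}$ by performing the $\mathbf{k}$-branching at every vertex of $T_{s_i}$ other than its root $s_i$ (branching multiplicity $1$ at $s_i$ itself, equal to $K_1$ when $s_i$ is a leaf). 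Reading off adjacency in a cograph from the label of the lowest common ancestor in its cotree, one checks directly that
\[
T^{\mathbf{k}}(H)=\begin{cases}\overline{K}_{k_{s_1}}[C_1]\sqcup\cdots\sqcup\overline{K}_{k_{s_b}}[C_b]&\text{if }\varepsilon=0,\\[1ex]K_{k_{s_1}}[C_1]+\cdots+K_{k_{s_b}}[C_b]&\text{if }\varepsilon=1,\end{cases}
\]
where $\overline{K}_m[C]$ is the disjoint union of $m$ copies of $C$ and $K_m[C]$ is the join of $m$ copies of $C$. By the inductive hypothesis each $(C_i)$ is strongly polynomial in the variables $(k_s:s\in V(T_{s_i})\setminus\{s_i\})$; since $(\overline{K}_k)$ and $(K_k)$ are strongly polynomial in $k$, Proposition~\ref{lexicographic} gives that $(\overline{K}_{k_{s_i}}[C_i])$ and $(K_{k_{s_i}}[C_i])$ are strongly polynomial in $k_{s_i}$ together with the variables of $C_i$, and then Proposition~\ref{disjoint_union} and Corollary~\ref{join} show the displayed disjoint union (resp.\ join) of these finitely many sequences is strongly polynomial. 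The variables that occur are precisely $(k_s:s\in V(T),\ s\neq r)$, and $k_r=1$, so $(T^{\mathbf{k}}(H))$ is strongly polynomial in $\mathbf{k}$, closing the induction.

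The one point that needs care is the bookkeeping of branching multiplicities: the multiplicity $k_{s_i}$ of a child $s_i$ of $r$ is accounted for by the outer lexicographic factor $\overline{K}_{k_{s_i}}[\,\cdot\,]$ or $K_{k_{s_i}}[\,\cdot\,]$ and must not be double-counted inside $C_i$; once this is pinned down the displayed identity is immediate. (The cotrees $T^{\mathbf{k}}$ arising along the way need not be in canonical alternating-label form, but this is harmless, as they still faithfully represent the cograph.) Alternatively, one can run the orbit-counting argument from the proof of Theorem~\ref{thm:poly_branching} essentially verbatim: ${\rm Aut}(T^{\mathbf{k}}(H))$ contains the iterated wreath product $\Sigma$ of the symmetric groups permuting, for each $s\in V(T)$, the $k_s$ copies of $T_s$; a connected subgraph $S$ with $|V(S)|\le|V(G)|$ meets at most $|V(G)|$ of these copies, so every copy of $S$ can be moved by an element of $\Sigma$ into a bounded sub-cotree, the number of $\Sigma$-orbits of copies of $S$ is independent of $\mathbf{k}$ for $\mathbf{k}$ large, and each orbit has size a divisor of $\mathbf{k}!/(\mathbf{k}-\mathbf{m})!$ and a multiple of $\mathbf{k}!/((\mathbf{k}-\mathbf{m})!\,\mathbf{m}!)$, hence the number $q(S;\mathbf{k})$ of subgraphs isomorphic to $S$ is polynomial in $\mathbf{k}$ for all $\mathbf{k}$; the expansion ${\rm hom}(G,T^{\mathbf{k}}(H))=\sum_S{\rm sur}(G,S)\,q(S;\mathbf{k})$ then finishes the proof.
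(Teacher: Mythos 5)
Your primary proof, by induction on cotree depth with the root's children handled via lexicographic products, disjoint unions and joins, is correct and takes essentially the same route as the paper, whose proof carries out the same inductive step more tersely by complementing so that the root is labelled $0$ and then invoking additivity of ${\rm hom}(G,\cdot)$ over disjoint unions for connected $G$. Your appended orbit-counting alternative is a genuinely different argument (and would need the same reduction to an asymmetric cotree as in Theorem~\ref{thm:poly_branching} to justify that $q(S;\mathbf{k})$ vanishes when some $k_s$ is too small), but since your main line of reasoning already coincides with the paper's, nothing further is required.
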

\begin{proof} By induction on depth of $T$ (removing the root of a cotree gives a set of smaller cotrees). Without loss of generality we may assume the root is labelled $0$, representing a final disjoint union (we know that a sequence $(H_{\mathbf{k}})$ is strongly polynomial if and only if $(\overline{H_{\mathbf{k}}})$ is strongly polynomial). The property ${\rm hom}(G,H_1\cup H_2)={\rm hom}(G,H_1)+{\rm hom}(G,H_2)$ for connected $G$ (it is enough to prove polynomiality for connected $G$) allows the inductive step to go through.
\end{proof}


We also have an analogue of Theorem~\ref{thm:bounded_bushiness}. For a cograph $H$, define $\gamma(H)$ to be the minimum value of $|V(T)|$ such that $H$ is represented by the cotree $T^{\mathbf{k}}$. For example, $\gamma(K_{k})=2$, $\gamma(K_{k,l})=5$ ($k\neq l$), $\gamma(K_{k,k,\ldots, k})=3$. 

\begin{thm}
Let $\mathcal{H}$ be a family of cographs such that $\{\gamma(H):H\in \mathcal{H}\}$ is bounded.  Then $\mathcal{H}$ can be partitioned into a finite number of subsequences of strongly polynomial sequences of graphs. 
\end{thm}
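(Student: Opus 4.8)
The plan is to mirror the proof of Theorem~\ref{thm:bounded_bushiness}, with cotrees playing the role of coloured rooted trees and with the ``reduced cotree'' playing the role of the branching core. By definition $\gamma(H)$ is the least value of $|V(T)|$ over all cotrees $T$ admitting a tuple $\mathbf{k}\in\mathbb{N}^{|V(T)|}$ with $H=T^{\mathbf{k}}(H)$, so the hypothesis that $\{\gamma(H):H\in\mathcal{H}\}$ is bounded, say by $B$, already tells us that each $H\in\mathcal{H}$ can be written as $H=T^{\mathbf{k}}(H)$ for some cotree $T$ with $|V(T)|\leq B$ and some $\mathbf{k}\in\mathbb{N}^{|V(T)|}$. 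It is convenient to pin this down canonically: starting from the canonical (alternating-label) cotree of $H$, repeatedly amalgamate any two label-isomorphic subtrees pendant from a common vertex, adding their branching multiplicities, until no such pair remains; call the result the \emph{reduced cotree} $T'$ of $H$. As in the branching-core discussion of Section~\ref{sec:bushiness}, $T'$ is then uniquely determined by $H$, satisfies $|V(T')|=\gamma(H)\leq B$, and $H=T'^{\mathbf{k}}(H)$ where $\mathbf{k}$ records the amalgamation multiplicities.

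Next I would use finiteness. There are only finitely many rooted-tree shapes on at most $B$ vertices, and for each shape only finitely many ways to label the non-leaf vertices by $0$ or $1$; hence the set $\{T_1,\dots,T_m\}$ of reduced cotrees that occur among members of $\mathcal{H}$ is finite. Partition $\mathcal{H}$ into the classes $\mathcal{H}_\ell=\{H\in\mathcal{H}:T'(H)\cong T_\ell\}$, and for each $\ell$ put $I_\ell=\{\mathbf{k}\in\mathbb{N}^{|V(T_\ell)|}:T_\ell^{\mathbf{k}}(H)\in\mathcal{H}_\ell\}$ (picking for each $H\in\mathcal{H}_\ell$ the tuple $\mathbf{k}$ furnished by its reduced representation). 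Then $\mathcal{H}_\ell=(T_\ell^{\mathbf{k}}(H):\mathbf{k}\in I_\ell)$ is a subsequence of $(T_\ell^{\mathbf{k}}(H):\mathbf{k}\in\mathbb{N}^{|V(T_\ell)|})$, which is strongly polynomial by the preceding theorem on branched cotrees; so each $\mathcal{H}_\ell$ is a strongly polynomial subsequence and $\mathcal{H}$ is the union of the $m$ sets $\mathcal{H}_\ell$, as required.

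The only genuinely delicate point is the well-definedness of the reduced cotree $T'$: one must invoke the uniqueness of the canonical cotree of a cograph (the standard fact recalled above) and verify that the amalgamation procedure is confluent, so that neither $T'$ nor the resulting partition depends on the order in which isomorphic pendant subtrees are merged. This is entirely parallel to Lemma~\ref{lem:branching_order} and the branching-core construction, and I expect it to be the main -- though routine -- obstacle. Should one wish to avoid it, the bare statement of the theorem can also be proved directly: $\gamma(H)\leq B$ already yields a cotree on at most $B$ vertices representing $H$, there are only finitely many such cotrees, and partitioning $\mathcal{H}$ according to which one is used (breaking ties arbitrarily) gives the finitely many strongly polynomial subsequences.
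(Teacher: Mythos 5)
The paper itself gives no proof of this theorem, leaving it as the evident cograph analogue of Theorem~\ref{thm:bounded_bushiness}; your argument is correct and is exactly the expected one, transposing that proof from coloured rooted trees to cotrees. You rightly observe that the ``reduced cotree'' gadget is not strictly needed: the bound on $\gamma(H)$ already hands you, for each $H\in\mathcal{H}$, a cotree $T$ with $|V(T)|\leq B$ and a tuple $\mathbf{k}$ such that $T^{\mathbf{k}}$ represents $H$, and since there are only finitely many labelled rooted trees on at most $B$ vertices, partitioning $\mathcal{H}$ by the (arbitrarily chosen) base cotree and invoking the preceding theorem on branched cotrees immediately finishes the proof. (One small notational slip: you write $H=T^{\mathbf{k}}(H)$, where the $H$ on the right should be the cograph represented by the base cotree $T$, not $H$ itself; the intent is clear but worth fixing.)
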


\end{document}